\def\url@leostyle{%
  \@ifundefined{selectfont}{\def\UrlFont{\sf}}{\def\UrlFont{\small\ttfamily}}}
\numberwithin{equation}{section}
\theoremstyle{definition}
\newtheorem{prop}{Proposition}[section]
\newtheorem{theorem}[prop]{Theorem}
\newtheorem{corollary}[prop]{Corollary}
\newtheorem{conjecture}[prop]{Conjecture}
\newtheorem{defn}[prop]{Definition}
\newtheorem{example}[prop]{Example}
\newtheorem{notation}[prop]{Notation}
\newtheorem{convention}[prop]{Convention}
\newtheorem{assumption}[prop]{Assumption}
\newtheorem{caution}[prop]{Caution}
\newtheorem{remark}[prop]{Remark}
\newtheorem{image}[prop]{Figure} 
\newcommand{\nc}{\newcommand}
\nc{\DMO}{\DeclareMathOperator}	
\nc{\newnotation}{\nomenclature}
\nc{\wrap}{\cW}
\nc{\Cob}{\mathsf{Cob}}
\nc{\mul}{\mathsf{Mul}}
\nc{\fat}{\mathsf{fat}}
\nc{\cob}{\mathsf{Cob}}
\nc{\coh}{\mathsf{Coh}}
\nc{\idem}{\mathsf{Idem}}
\nc{\sets}{\mathsf{Sets}}
\nc{\near}{\mathsf{near}}
\nc{\sing}{\mathsf{Sing}}
\nc{\symp}{\mathsf{Symp}}
\nc{\perf}{\mathsf{Perf}}
\nc{\ssets}{\mathsf{sSets}}
\nc{\cmpct}{\mathsf{cmpct}}
\nc{\finite}{\mathsf{Finite}}
\nc{\compact}{\mathsf{cmpct}}
\nc{\pwrap}{\mathsf{PWrap}}
\nc{\coder}{\mathsf{Coder}}
\nc{\bimod}{\mathsf{Bimod}}
\nc{\grmod}{\mathsf{GrMod}}
\nc{\spaces}{\mathsf{Spaces}}
\nc{\pwrms}{\mathsf{PWrFuk}_{M,S}}
\nc{\pwrmf}{\mathsf{PWrFuk}_{M,F}}
\nc{\pwrapmf}{\mathsf{PWrFuk}_{M,F}}
\nc{\fuk}{\mathsf{Fukaya}}
\nc{\infwr}{\mathsf{InfWr}}
\nc{\fukaya}{\mathsf{Fukaya}}
\nc{\autml}{\mathsf{Aut}_{M,\Lambda}}
\nc{\fukml}{\mathsf{Fukaya}_{M,\Lambda}}
\nc{\fukmle}{\mathsf{Fukaya}_{M,\Lambda,\epsilon}}
\nc{\fukmod}{\wrfukcompact(M)\modules}
\nc{\lag}{\mathsf{Lag}}
\nc{\lagm}{\lag_M}
\nc{\lago}{\lag^o}
\nc{\lagml}{\lag_{M,\Lambda}} 
\nc{\lagmle}{\lag_{M,\Lambda,\epsilon}}
\nc{\fun}{\mathsf{Fun}}
\nc{\vect}{\mathsf{Vect}}
\nc{\chain}{\mathsf{Chain}}
\nc{\wrfuk}{\mathsf{WrFukaya}}
\nc{\wrfukcompact}{\mathsf{WrFukaya}_{\mathsf{cmpct}}}
\nc{\pwrfuk}{\mathsf{PWrFukaya}}
\nc{\inffuk}{\mathsf{InfFuk}}
\nc{\pwrfukml}{\mathsf{PWrFukaya}_{M,\Lambda}}
\nc{\inffukml}{\mathsf{InfFuk}_{M,\Lambda}}
\nc{\nattrans}{\mathsf{NatTrans}}
\nc{\corres}{\mathsf{Corres}}
\nc{\fukep}{\fukaya_\Lambda(M,\epsilon)}
\nc{\fukepop}{\fukaya_\Lambda(M,\epsilon)^{\op}}
\nc{\lagep}{\lag_\Lambda(M,\epsilon)}
\DMO{\cyl}{cyl} 
\nc{\dbcoh}{D^b\mathsf{Coh}}
\nc{\corr}{\mathsf{Corr}}
\nc{\cat}{\mathsf{Cat}}
\nc{\Cat}{\mathsf{Cat}}
\nc{\ainfty}{\mathsf{A}_\infty}
\nc{\inftycat}{\mathcal{C}\!\operatorname{at}_\infty}
\nc{\Ainftycat}{\mathcal{C}\!\operatorname{at}_{A_\infty}}
\nc{\ainftycat}{\mathcal{C}\!\operatorname{at}_{A_\infty}}
\nc{\stablecat}{\mathcal{C}\!\operatorname{at}_\infty^{\Ex}}
\DMO{\im}{im}
\DMO{\ev}{ev}
\DMO{\inj}{inj}
\DMO{\fib}{fib}
\DMO{\conf}{Conf}
\DMO{\chains}{Chains}
\DMO{\cochains}{Cochains}
\DMO{\cone}{Cone}
\DMO{\ran}{Ran}
\DMO{\rot}{Rot}
\DMO{\leg}{Leg}
\DMO{\imm}{imm}
\DMO{\adj}{adj}
\DMO{\tree}{Tree}
\DMO{\cube}{Cube}
\DMO{\deep}{deep}
\DMO{\back}{back}
\DMO{\front}{front}
\DMO{\flow}{Flow}
\DMO{\floer}{Floer}
\DMO{\maps}{Maps}
\DMO{\exact}{exact}
\DMO{\excess}{Excess}
\DMO{\Decomp}{Decomp}
\DMO{\decomp}{Decomp}
\DMO{\collar}{collar}
\DMO{\yoneda}{Yoneda}
\DMO{\hamspace}{Ham}
\DMO{\sympspace}{Symp}
\DMO{\holomaps}{Holomaps}
\DMO{\comp}{Comp}
\DMO{\crit}{Crit}
\DMO{\test}{{test}}
\DMO{\sign}{sign}
\DMO{\topp}{top}
\DMO{\indx}{Index}
\DMO{\Break}{Break} 
\DMO{\zero}{zero} 
\DMO{\ob}{Ob}
\DMO{\gr}{Gr} 
\DMO{\Gr}{Gr} 
\DMO{\cl}{Cl} 
\DMO{\grlag}{GrLag}
\DMO{\GrLag}{GrLag}
\DMO{\Pin}{Pin}
\DMO{\Graph}{Graph}
\DMO{\grph}{Graph}
\DMO{\pin}{Pin}
\DMO{\gap}{Gap}
\DMO{\Ex}{Ex}
\DMO{\id}{id}
\DMO{\End}{End}
\DMO{\sym}{Sym} 
\DMO{\aut}{Aut}
\DMO{\DK}{DK} 
\DMO{\poly}{poly} 
\DMO{\diff}{Diff}
\DMO{\coll}{coll}
\DMO{\dist}{dist} 
\DMO{\coker}{coker} 
\nc{\kernel}{\ker} 
\DMO{\sspan}{span}
\DMO{\hocolim}{hocolim}	
\DMO{\holim}{holim}
\DMO{\sk}{sk}
\DMO{\ho}{ho}
\DMO{\fin}{fin}
\DMO{\tor}{Tor}
\DMO{\ext}{Ext}
\DMO{\ret}{Ret}
\DMO{\ham}{Ham}
\DMO{\con}{con}
\DMO{\leaf}{leaf}
\DMO{\supp}{supp}
\DMO{\edge}{edge}
\DMO{\colim}{colim}
\DMO{\edges}{edges}
\DMO{\Image}{image}
\DMO{\roots}{roots}
\DMO{\height}{height}
\DMO{\finmod}{FinMod}
\DMO{\leaves}{leaves}
\DMO{\planar}{planar}
\DMO{\vertices}{vertices}
\nc{\lagg}{\lag^{\cG}}
\nc{\iso}{\mathsf{Iso}}
\nc{\Set}{\mathsf{Set}}
\nc{\ass}{\mathsf{ \bf Ass}}
\nc{\Mod}{\mathsf{Mod}}
\nc{\modules}{\mathsf{Mod}}
\nc{\sset}{\mathsf{sSet}}
\nc{\liou}{\mathsf{Liou}}
\nc{\poset}{\mathsf{Poset}}
\nc{\trno}{T^*\RR^n_{\geq 0}}
\nc{\spectra}{\mathsf{Spectra}}
\nc{\tensorfin}{\tensor^{\fin}}
\nc{\lagptg}{\lag_{pt,pt}^{\cG}}
\nc{\Fin}{\mathcal{F}\mathsf{in}}
\nc{\lagnl}{\lag_{N,\Lambda}}
\nc{\lagmlg}{\lag_{M,\Lambda}^{\cG}}
\nc{\lagsplit}{\lag^{\mathsf{split}}}
\nc{\lagktimes}{(\lag^{\dd k})^\times}
\nc{\lagplanar}{\lag^{\times,\planar}}
\nc{\smsh}{\wedge}
\nc{\un}{\underline}
\nc{\xto}{\xrightarrow}
\nc{\xra}{\xto}
\nc{\tensor}{\otimes}
\nc{\del}{\partial}
\nc{\dd}{\diamond}
\nc{\tri}{\triangle}
\nc{\bb}{\Box}
\nc{\into}{\hookrightarrow}
\nc{\onto}{\twoheadrightarrow}
\nc{\contains}{\supset}
\nc{\transverse}{\pitchfork}
\nc{\uncirc}{\underline{\circ}}
\nc{\Jbar}{\overline{J}}
\nc{\Fbar}{\overline{F}}
\nc{\delbar}{\overline{\del}}
\nc{\thetabar}{\overline{\theta}}
\nc{\omegabar}{\overline{\omega}}
\nc{\colldiff}{\diff^{\del}} 
\nc{\trbar}{\overline{T^*\RR}}
\nc{\tr}{T^*\RR}
\nc{\tsa}{Ts\cA}
\nc{\tsb}{Ts\cB}
\nc{\cmbar}{\overline{\cM}}
\nc{\crbar}{\overline{\cR}}
\nc{\vece}{ {\vec \epsilon}}	
\nc{\vecd}{ {\vec \delta}}
\nc{\ov}{\overline}
\DMO{\op}{op}
\nc{\opp}{ ^{\op}}
\nc{\hiro}{\textcolor{blue}}
\nc{\eqn}{\begin{equation}}
\nc{\eqnn}{\begin{equation}\nonumber}
\nc{\eqnd}{\end{equation}}
\nc{\enum}{\begin{enumerate}}
\nc{\enumd}{\end{enumerate}}
\def\cA{\mathcal A}\def\cB{\mathcal B}\def\cC{\mathcal C}
\def\cF{\mathcal F}\def\cG{\mathcal G}
\def\cM{\mathcal M}
\def\cR{\mathcal R}
\def\cW{\mathcal W}
\def\AA{\mathbb A}\def\CC{\mathbb C}
\def\RR{\mathbb R}
\def\ZZ{\mathbb Z}
\title{Surgery induces exact sequences in Lagrangian cobordisms}
\author{Hiro Lee Tanaka}
\begin{document}

\maketitle

\begin{abstract}
We prove that if $L_0$ and $L_1$ are exact branes intersecting in precisely one point, then there exists a fiber sequence in the $\infty$-category of Lagrangian cobordisms consisting of $L_0$, $L_1$, and a surgery of $L_0$ with $L_1$. By combining this with the exact functor from~\cite{tanaka-exact}, we find analogues of results of Biran and Cornea in the wrapped and exact setting. 
\end{abstract}

\tableofcontents

\section{Introduction}
This paper explores the following hypothesis: Fukaya-categorical information can be detected at the level of Lagrangian cobordisms (in particular, without counting any holomorphic curves). 

For about six years we have known that Lagrangian cobordisms induce exact sequences in the Fukaya category~\cite{biran-cornea, biran-cornea-2, mak-wu, tanaka-exact}. This principle is useful: For instance, by constructing a Lagrangian cobordism induced by Polterovich surgery, one can generalize the Seidel exact sequence~\cite{mak-wu}.

At the same time, a parallel story has shown that the theory of Lagrangian cobordisms on its own (with no regards to Floer theory) has rich algebraic structures---in particular, for any Liouville domain $M$, one can speak of exact sequences in a suitable $\infty$-category of Lagrangian cobordisms. The works~\cite{nadler-tanaka, tanaka-pairing, tanaka-exact}  show that this {\em a priori} rich structure of Lagrangian cobordisms implies many of the known uses of Lagrangian cobordism theory to Floer theory. 

\begin{remark}
This storyline fits a larger narrative: Spectrum-enriched invariants are more powerful than homological (i.e., chain-complex-enriched) invariants. The former has a long history of encoding invariants of smooth topology, so it is natural to seek a spectral enhancement of classical Floer-type invariants---this has led, for example, to a proof of the triangulation conjecture~\cite{manolescu}. 

There are three natural candidates at present for enriching {\em Lagrangian} Floer theory to spectra---microlocal sheaves~\cite{jin-treumann}, deformation-theoretic reformulations~\cite{abouzaid-ucla-talk,lurie-tanaka}, and the theory of Lagrangian cobordisms~\cite{nadler-tanaka}. This paper focuses on the last of these, which has an ``inevitable'' appearance of spectra---while the former two incorporate spectra by changing coefficients, the stable algebraic structure of Lagrangian cobordisms necessitates a spectral enrichment. (Curiously, the stability of Lagrangian cobordisms arises in a {\em distinct} way from stability's appearance in classical cobordism theory---for instance, no Thom spaces are involved in the proof of the spectral enrichment.)
\end{remark}

\begin{remark}
While many statements in symplectic geometry are proven using holomorphic disks, the hypotheses and conclusions often involve ``just the Lagrangian geometry.'' 
It is an open (and vague) question to determine how closely Lagrangian cobordism phenomena parallel the Floer-theoretic arguments.
\end{remark}

This paper continues this storyline. We prove that Polterovich surgery induces exact sequences in the Lagrangian cobordism $\infty$-category $\lag(M)$:

\begin{theorem}\label{theorem.main}
Fix a Liouville domain $M$.
Let $L_0$ and $L_1$ be transverse branes in $M \times T^*\RR^n$ for some $n \geq 0$, and assume $L_0$ and $L_1$ have exactly one intersection point. Then there exists a brane $L_1^\sigma$ (with the same underlying Lagrangian as $L_1$, but with possibly different brane structure) and an exact sequence in $\lag(M)$
	\eqnn
	L_1^\sigma \to L_0 \sharp L_1 \to L_0
	\eqnd 
where the middle term is an exact Polterovich surgery of $L_0$ with $L_1^\sigma$.
\end{theorem}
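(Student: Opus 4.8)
The plan is to produce the exact sequence by explicitly building a Lagrangian cobordism in $M \times T^*\RR^n \times T^*\RR$ whose ends are (up to shift/suspension) the three branes $L_1^\sigma$, $L_0 \sharp L_1^\sigma$, and $L_0$, and then invoking the fact that in the $\infty$-category $\lag(M)$ a cobordism with three cylindrical ends of this shape witnesses a cofiber/fiber sequence. The core geometric input is Polterovich surgery itself: near the unique intersection point $p$ of $L_0$ and $L_1$, pick a Darboux chart in which $L_0$ and $L_1$ look like a pair of transverse Lagrangian planes, and replace a neighborhood of $p$ by the standard Lagrangian surgery handle (the ``Lagrangian neck'' modeled on $\{(x, y) : x_i y_j = \text{const}\}$, or Polterovich's original conormal-type construction). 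The first step is therefore to set up this local model carefully and to check that, because $L_0$ and $L_1$ are exact branes and meet in exactly one point, the surgered Lagrangian $L_0 \sharp L_1^\sigma$ can be made exact — this is where the single intersection point is essential, since the action/primitive must match up across the handle, and it also forces the specific modified brane structure $L_1^\sigma$ (the Maslov-type data and grading have to be transported through the handle, changing $L_1$'s brane structure by a shift).

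The second step is the ``suspension'' or trace construction: I would build a cobordism $V \subset M \times T^*\RR^n \times T^*\RR$ that at one end ($s \ll 0$, say) is cylindrical on $L_0 \sharp L_1^\sigma$, and at the other end ($s \gg 0$) has two cylindrical components, one on $L_0$ and one on $L_1^\sigma$, arranged at different heights in the $T^*\RR$-direction. This is exactly the ``Lagrangian cobordism realizing the surgery exact triangle'' of Biran–Cornea, adapted to the exact/wrapped setting: one interpolates between ``the surgery'' at one end and ``the two pieces pulled apart'' at the other end, using the surgery handle as the genuinely $s$-dependent part of $V$ and keeping everything cylindrical away from a compact region. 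I must check that $V$ is itself an exact brane in the sense required for it to define a morphism in $\lag(M)$, which again reduces to a primitive-matching computation localized near the handle, plus a standard check that the brane structures glue.

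The third step is translating the geometry into the statement about $\lag(M)$. Here I would appeal to the structure of $\lag(M)$ as developed in the earlier works (a stable $\infty$-category in which cylindrical cobordisms with ends $A$ and $B_1, \dots, B_k$ laid out monotonically in the $\RR$-coordinate encode that $A$ is an iterated extension of the $B_i$; in particular a two-ended-to-one-ended cobordism gives a cofiber sequence). Applying this to $V$ yields a cofiber sequence $L_1^\sigma \to L_0 \sharp L_1^\sigma \to L_0$ (or its rotation), which in a stable $\infty$-category is the same as the asserted fiber sequence, possibly after reindexing which piece sits ``on top'' in the surgery cobordism — this is the source of the precise order $L_1^\sigma \to L_0 \sharp L_1 \to L_0$ in the statement, and one checks it matches the convention by examining the ends of $V$.

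The main obstacle I expect is the exactness bookkeeping through the surgery handle: one must simultaneously (a) choose the surgery profile so that the primitive for the Liouville form extends over the handle, which is where ``exactly one intersection point'' is used in an essential way (with more intersection points the periods obstruct a global primitive without further hypotheses), and (b) track how the brane structure — grading and Pin/orientation data — is modified across the handle, pinning down $L_1^\sigma$ as a genuine brane on the same underlying Lagrangian as $L_1$. Verifying that the resulting $V$ is a legitimate exact brane cobordism in the precise technical sense of $\lag(M)$ (cylindrical at infinity in both the $M$- and $T^*\RR$-directions, with compatible primitive and brane data) is the step most likely to require care rather than cleverness, and it is essentially a local computation near the intersection point combined with the already-established formalism of $\lag(M)$.
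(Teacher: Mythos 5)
Your overall strategy—build the Biran--Cornea ``trace of the surgery'' as an exact brane cobordism and read off an exact triangle—is the right starting point, and your attention to where the single intersection point is used (the primitive-matching computation across the handle) matches the paper's Proposition~\ref{prop.surgery-primitive}. The construction of the cobordism $V$ you describe is essentially the paper's $Q^{(3)}$ from Proposition~\ref{prop.surgery-cobordism}.

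However, there is a real gap in your third step. You assert that in $\lag(M)$ ``cylindrical cobordisms with ends $A$ and $B_1,\dots,B_k$ laid out monotonically in the $\RR$-coordinate encode that $A$ is an iterated extension of the $B_i$,'' and you treat the passage from a three-ended cobordism to a cofiber sequence as part of the ``already-established formalism of $\lag(M)$.'' It is not. Morphisms in $\lag(M)$ are cobordisms collared at two horizontal ends (over $q \ll 0$ and $q \gg 0$); the third end of $V$ is a \emph{vertical} collar running off in the $p$-direction of $T^*\RR$, and nothing in the prior works on $\lag(M)$ says that such a vertical end automatically identifies the mapping cone. Making this precise is the main new theorem of the present paper (Theorem~\ref{theorem.vertical-collar-equivalence}): a vertically collared object with a unique collar escaping to $p \gg 0$ is equivalent to the collaring brane $X_i$. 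The proof of that theorem is nontrivial and is itself a surgery argument—one straightens the vertical collar via a linear Hamiltonian isotopy, adjoins a zero object $\gamma' \times X_0$ far from the skeleton, recognizes the result as a stabilized surgery, and then shows the kernel of the associated trace cobordism is a zero object because it can be pushed off the skeleton. Without this input, you cannot conclude that $\ker(Q) \simeq L_1^\sigma$; you have only produced a candidate Lagrangian with an $L_1^\sigma$-shaped vertical collar. So your proposal leaves the decisive step to an appeal to a fact that is not in the prior literature and that the paper has to prove from scratch.
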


We emphasize that the above theorem makes no mention of holomorphic curves and requires no use of them in its proof. 

\begin{remark}
If $M$ is Weinstein, Lagrangian coskeleta give a collection of eventually conical branes, and in particular, objects of $\lag(M)$. Following a strategy that we learned in conversations with Ganatra-Pardon-Shende, we anticipate that Theorem~\ref{theorem.main} will allow us to demonstrate that these coskeleta generate $\lag(M)$ as a stable $\infty$-category. This will be the subject of future work.
\end{remark}
 
Now let $\cF(M)$ denote the Fukaya category of those branes geometrically near the skeleton $\sk(M)\subset M$. Also let $\finite(\cF(M)) \subset \fun(\cF(M)^{\op},\chain)$ be the $\infty$-category of finite modules over $\cF(M)$---that is, those contravariant functors that assign every object a finitely generated chain complex (over the base ring $\ZZ$). We have:

\begin{corollary}
The modules represented by $L_0$, $L_1^\sigma$, and $L_0 \sharp L_1$ fit into a short exact sequence in $\finite(\cF(M))$:
	\eqnn
	CF^*(-\times E^n, L_1^\sigma)
	\to CF^*(- \times E^n,
	L_0 \sharp L_1) \to CF^*(-\times E^n,L_0) .
	\eqnd.
\end{corollary}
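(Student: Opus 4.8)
The plan is to push the exact sequence of Theorem~\ref{theorem.main} through an exact functor $\lag(M) \to \finite(\cF(M))$ and identify the images of the three objects with the stated Floer-theoretic modules. First I would invoke the exact functor constructed in~\cite{tanaka-exact}, which sends an object of $\lag(M)$ — possibly living in $M \times T^*\RR^n$ — to the module it represents over the Fukaya category $\cF(M)$ of branes near the skeleton. Concretely this functor assigns to a brane $L \subset M \times T^*\RR^n$ the contravariant functor $K \mapsto CF^*(K \times E^n, L)$, where $E^n$ is the fixed ``capping'' object in $T^*\RR^n$ used in that construction; exactness of this functor means it carries fiber sequences in $\lag(M)$ to fiber sequences (equivalently, since the target is stable, short exact sequences) in $\finite(\cF(M))$. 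Finiteness of the output is exactly the condition that each $CF^*(K \times E^n, L)$ is a finitely generated complex over $\ZZ$, which holds because $L$ is eventually conical and hence has finitely many intersection points with each compact brane after perturbation.

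Next I would apply this functor to the exact sequence
\eqnn
L_1^\sigma \to L_0 \sharp L_1 \to L_0
\eqnd
from Theorem~\ref{theorem.main}. Exactness of the functor immediately yields a fiber sequence
\eqnn
CF^*(-\times E^n, L_1^\sigma) \to CF^*(-\times E^n, L_0 \sharp L_1) \to CF^*(-\times E^n, L_0)
\eqnd
in $\finite(\cF(M))$. Since $\finite(\cF(M))$ sits inside the stable $\infty$-category $\fun(\cF(M)^{\op},\chain)$ and a fiber sequence in a stable $\infty$-category of chain complexes is the same data as a short exact sequence up to quasi-isomorphism, this is precisely the assertion of the Corollary. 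The only remaining bookkeeping is to confirm that the surgery object $L_0 \sharp L_1$ appearing in Theorem~\ref{theorem.main} is an object to which the functor of~\cite{tanaka-exact} applies — i.e.\ that it is (eventually) conical and carries a legitimate brane structure — which is built into the statement of that theorem (it produces an \emph{exact} Polterovich surgery, living in the same ambient $M \times T^*\RR^n$).

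I expect the main subtlety to be purely one of matching conventions rather than genuine mathematical content: one must check that the ``module represented by $L$'' in the phrasing of the Corollary agrees, on the nose, with the image of $L$ under the exact functor of~\cite{tanaka-exact}, including the role of the auxiliary object $E^n \subset T^*\RR^n$ and the direction of variance. Once that identification is in place, the Corollary is a formal consequence of Theorem~\ref{theorem.main} together with the existence and exactness of the functor to $\finite(\cF(M))$; no new geometric input — and in particular no count of holomorphic curves beyond those already packaged into the functor of~\cite{tanaka-exact} — is required.
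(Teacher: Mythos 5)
Your argument is exactly the paper's: apply the exact functor $\Xi$ to the fiber sequence of Theorem~\ref{theorem.main} and observe that the resulting Floer modules are finite. The only small slip is attribution---$\Xi$ is constructed in~\cite{tanaka-pairing}, with only its exactness coming from~\cite{tanaka-exact}---and the paper locates the source of finiteness in $\cF(M)$ consisting of branes geometrically near the skeleton (together with, as you note, eventual conicality of $L$), but these are cosmetic differences.
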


\begin{proof}
In~\cite{tanaka-pairing}, we showed that there exists a functor 
	\eqnn
	\Xi: \lag(M) \to \finite(\cF(M)),
	\eqnd
taking a brane $L \subset M \times T^*E^n$ to the module given by taking an object $X \in \cF(M)$ and computing a Floer complex $CF^*(X \times E^n, L)$. Because we consider $\cF(M)$ to only consist of those branes geometrically near the skeleton, it follows that the Floer complex is generated by finitely many intersection points, hence is a finite chain complex.  Moreover, the main result of~\cite{tanaka-exact} shows that $\Xi$ is exact---i.e., it preserves exact triangles.
\end{proof}

Theorem~\ref{theorem.main} allows us to deduce results analogous to those of Biran-Cornea~\cite{biran-cornea}. Here is an imprecise formulation for the sake of the introduction:

\begin{theorem}\label{theorem.filtration-vague}
Any cobordism from $L_0$ to $L_1$ with $k$ vertical ends induces a $k$-step filtration on $L_1$ in the Lagrangian cobordism category.
\end{theorem}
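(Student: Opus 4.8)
The plan is to realize such a cobordism as an iterated concatenation of exact Polterovich surgery traces and then to apply Theorem~\ref{theorem.main} to each trace. Fix a cobordism $V$ from $L_0$ to $L_1$ carrying vertical ends $W_1,\dots,W_k$, numbered so that $W_j$ lies ``further out'' along the Liouville coordinate than $W_{j-1}$. Since $\lag(M)$ is a stable $\infty$-category by~\cite{nadler-tanaka}, the precise statement I would establish is the existence of a tower
	\eqnn
	L_0 = F_0 \to F_1 \to \dots \to F_k = L_1 \qquad \text{in } \lag(M),
	\eqnd
whose total composite is the morphism represented by $V$, together with, for each $j$, an exact sequence in $\lag(M)$ relating $F_{j-1}$, $F_j$, and a brane structure $W_j^\sigma$ on the underlying Lagrangian of $W_j$ (with the precise placement of $W_j^\sigma$ as fiber versus cofiber fixed by the conventions of the body). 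When $L_0$ is the zero object this is literally a $k$-step filtration of $L_1$ with associated graded $W_1^\sigma,\dots,W_k^\sigma$; in general it is a filtration of $L_1$ relative to $L_0$. (I am also using tacitly that a brane and its stabilizations by cotangent factors are identified in $\lag(M)$, so that there is no harm in the $F_j$ living in $M\times T^*\RR^{n_j}$ with varying $n_j$, as happens already in Theorem~\ref{theorem.main}.)

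I would argue by induction on $k$. For $k=0$ the cobordism $V$ has exactly two ends and represents an equivalence $L_0\simeq L_1$: its orientation reversal is an inverse, up to the standard cobordism that pushes a turnback off to infinity. For the inductive step, I would first isotope $V$, rel all of its ends, so that in a neighborhood of the outermost vertical end $W_k$ it agrees with the trace of an exact surgery; concretely, pick a regular slice of $V$ separating $W_k$ from $W_1,\dots,W_{k-1}$ and arrange, after a small generic perturbation, that on this slice $V$ is transverse and meets $W_k$ in a single point, so that the portion of $V$ lying beyond the slice is literally a surgery handle. Cutting along the slice and completing the truncated cobordism with a conical end then exhibits $V$ as a concatenation of a cobordism $V'$ from $L_0$ to an intermediate brane $F_{k-1}$ carrying the remaining vertical ends $W_1,\dots,W_{k-1}$ with the surgery trace $V_k$ realizing $L_1 \simeq F_{k-1}\sharp W_k$. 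Theorem~\ref{theorem.main} applied to $V_k$ produces the exact sequence relating $F_{k-1}$, $F_k = L_1$, and $W_k^\sigma$; the inductive hypothesis applied to $V'$ produces the tower through $F_{k-1}$; and the two splice together, using the standard fact that a tower of cofiber sequences in a stable $\infty$-category assembles into a filtered object with the prescribed graded pieces.

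The main obstacle is the geometric decomposition in the inductive step: showing that an arbitrary cobordism with $k$ vertical ends can, after a controlled isotopy rel ends, be severed into $k$ pieces each of which is (a stabilization of) an exact surgery trace, and that the intermediate truncations $F_j$ are honest eventually-conical branes, so that the resulting concatenations are genuine compositions in $\lag(M)$ rather than merely piecewise-smooth Lagrangians. This is the $\lag(M)$-incarnation of the principle of Biran--Cornea~\cite{biran-cornea} that Lagrangian surgery implements handle attachment on cobordisms; the real work lies in carrying it out compatibly with the non-characteristic and conicality conditions built into the definition of $\lag(M)$, in controlling the perturbation so that the slices meet the outer vertical end transversally in a single point---the hypothesis of Theorem~\ref{theorem.main}---and in checking that the brane structures $W_j^\sigma$ produced along the way fit together consistently. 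Once the decomposition is in hand the rest is formal; in particular, applying the exact functor $\Xi$ of~\cite{tanaka-pairing, tanaka-exact} to the resulting tower gives the corresponding $k$-step filtration on the module level, exactly as in the Corollary above.
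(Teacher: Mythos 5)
Your approach is genuinely different from the paper's, and the difference matters because the difficulty you flag at the end — isotoping $V$ rel all ends so that it severs into $k$ pieces, each literally a surgery trace with eventually conical intermediate truncations — is precisely the hard step your argument never resolves. The paper never cuts the cobordism $V$ at all. Instead (Theorem~\ref{theorem.vertical-collar-filtration}) it treats the vertically collared brane $L$ as an \emph{object} of $\lag(M)$ and, at each inductive stage, introduces a fresh auxiliary brane $\Omega X_i$ (a cotangent fiber times $X_i$, with an appropriately shifted grading) placed so as to meet a Hamiltonian translate $\phi_i(L_i)$ cleanly along $\{x\}\times X_i$. The resulting stabilized surgery $L_i \sharp_{\{x\}\times X_i} \Omega X_i$ splits as a disjoint union $L_{i+1}\coprod A_i$, where $A_i$ is supported away from $\RR\times\Lambda$ and is therefore a zero object by Proposition~\ref{prop.lambda-avoiding-zero-objects}; so by Proposition~\ref{prop.disjoint-union-coproduct} the surgery is equivalent to $L_{i+1}$ outright. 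Theorem~\ref{theorem.main} then hands over the triangle $\Omega X_i \to L_{i+1} \to L_i$, and rotating gives $L_{i+1}\to L_i\to X_i$. There is no slicing, no appeal to ``$V$ agrees locally with a surgery handle,'' and consequently no need to verify that intermediate truncations are honest branes: each $L_{i+1}$ is produced by an explicit, already-established surgery construction applied to $L_i$, not by cutting $V$.

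So what you propose is a handle-decomposition-of-the-morphism argument in the spirit of Biran--Cornea, while the paper runs an induction at the level of objects using auxiliary cotangent-fiber surgeries to pinch off vertical ends one at a time. Your version, if it could be completed, would carry extra information (it realizes the total composite as $V$ itself), but that extra content is exactly what makes it harder, and the geometric decomposition you defer is not a routine transversality perturbation: an arbitrary cobordism with $k$ vertical ends need not be isotopic rel ends, within the eventually-conical class, to a concatenation of surgery traces, and showing that it is would require an argument of a different character from anything else in the paper. By contrast, the paper's placement of $\Omega X_i$ is a \emph{construction} one performs, not a normal form one must put $V$ into. I would also tighten your base case: the right justification in the $\lag(M)$ framework that a cobordism with no vertical ends is an equivalence is Theorem~\ref{theorem.vertical-collar-equivalence} together with Proposition~\ref{prop.cone-zero-equivalence} (the bent kernel has no upward vertical end, hence vanishes), rather than an appeal to ``orientation reversal is an inverse.''
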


See Section~\ref{section.filtrations}. 
By applying $\Xi$ and appealing to the exactness of $\Xi$, we conclude that such a cobordism induces a $k$-step filtration on $L_1$ in $\finite(\cF(M))$. We discuss some conjectural uses of this theorem in Section~\ref{section.conjectures}.

\begin{remark}
To help orient the reader, we remark that the ``Lagrangian cobordism categories'' considered here are different from those in~\cite{biran-cornea, biran-cornea-2}. The geometric differences (which are not the major differences) include the following points:
\enum
\item In this work, we work with Liouville domains $M$, while~\cite{biran-cornea, biran-cornea-2} utilize compact monotone $M$.
\item Here, we work with possibly non-compact but exact Lagrangian branes, while~\cite{biran-cornea,biran-cornea-2} utilize monotone branes.
\enumd

The more substantive differences are in the categorical structures. The $\infty$-category $\lag(M)$ of this paper sees the entire spectrum of Lagrangian cobordisms between any two objects of $M$, and keeps track of all the higher coherences of composition. This allows one to encode, for instance, all the $A_\infty$ structures of endomorphisms, the homotopical data necessary to write down fiber sequences (i.e., exact triangles), and other higher-algebraic data we will need for future applications. In contrast, the category from~\cite{biran-cornea,biran-cornea-2} seems to be equivalent to the (strict) category associated to a planar colored operad: The planar colored operad associated to the s-dot construction of (the monotone version of) the present paper's $\infty$-category. 
\end{remark}

\begin{remark}
To relate the theory of Lagrangian cobordisms to Floer theory in the monotone setting, one needs to fit the framework of~\cite{ritter-smith} to the works~\cite{tanaka-pairing, tanaka-exact}. But insofar as one is studying Lagrangian cobordisms without reference to Floer theory, the adaptation is straightforward, and the monotone analogues of the results in~\cite{nadler-tanaka} all hold.
\end{remark}

\subsection{Notation and conventions}

\begin{notation}
As usual, if $B \subset A$, we let $A \setminus B$ denote the complement of $B$ in $A$. Sometimes $B$ will not be a subset of $A$, in which case we will write $A \setminus B$ as shorthand for $A \setminus (A \cap B)$. 
\end{notation}

\begin{convention}\label{convention.grading}
When discussing gradings, we will choose a covering map $\RR \to S^1$. We take this map to be the homomorphism $t \mapsto \exp(2 \pi i t)$, so that its kernel is $\ZZ \subset \RR$. 
\end{convention}

\subsection{Acknowledgments}
The writing of this work began at the March 2018 AIM workshop on arboreal singularities. We thank AIM for its hospitality and thank the participants for a stimulating environment.

\section{Geometric Preliminaries}

We recall some background on Liouville domains and their branes. 

\subsection{Completions of (possibly non-compact) Liouville domains}\label{section.liouville-completions}

\begin{assumption}\label{assumption.M}
Throughout, we assume that $M$ is the completion of a (possibly non-compact) Liouville domain. We also assume that $2c_1(TM) = 0$. (See Definition~\ref{defn.grading}.)
\end{assumption}

For the reader's convenience, and to set notation, we recall what it means to be a completion of a Liouville domain:

\begin{notation}
\enum
\item $M$ is equipped with a 1-form $\theta$, called the Liouville form.
\item $\omega := d\theta=$ is a symplectic form on $M$.
\item Let $X_\theta$ denote the Liouville vector field associated to $\theta$---that is, the unique vector field satisfying
	\eqnn
	\theta = \iota_{X_\theta}\omega.
	\eqnd
	The condition that $M$ be the completion of a Liouville domain means the following: $M$ can be written as the union of two sets,
		\eqnn
			M = M^0 \bigcup_{\del M^0} \del M^0 \times \RR_{\geq 0}
		\eqnd
	where $M^0$ is a manifold with boundary $\del M^0$, $X_\theta$ points outward along $\del M^0$, and $\del M^0 \times \RR_{\geq 0}$ has $\RR_{\geq 0}$-coordinate parametrized by the flow of $X_\theta$. (In particular, the flow of $X_\theta$ exists for all positive time.)
\item We assume that the flow of $X_\theta$ also exists for all negative time, and that for any $x \in M$, the limit of the time $t$ flow of $x$ exists as $t \to -\infty$.
\enumd
\end{notation}

\begin{defn}
Let $\phi_{X_\theta,t}$ denote the time $t$ flow of the vector field $X_\theta$. We let the {\em skeleton of $M$} denote the set
	\eqnn
	\sk(M) := \bigcap_{t <0 } \phi_{X_\theta,t}(M^0).
	\eqnd
\end{defn}

\begin{caution}
$M^0$ and $\del M^0$ need not be compact. This is why we emphasize that $M$ is the completion of a ``(possibly non-compact)'' Liouville domain. (Usually, Liouville domains are taken to be the data of a compact $M^0$.) However, even in this paper, we typically only consider examples where this non-compactness is captured by the non-compactness of the skeleton, and where the non-compactness of the skeleton is in turn highly controlled---for example, when $M = T^*\RR$. Even in the most general settings, we anticipate that the non-compact skeleta we consider can be modelled as stratified spaces obtained as follows: One begins with a {\em compact} stratified space with boundary, and attaches an infinite cylinder to the boundary.
\end{caution}

\begin{remark}
We refer the reader to~\cite{abouzaid-seidel}, where one version of the wrapped Fukaya category is constructed given an $(M,\theta)$ with $M^0$ compact. In~\cite{ritter-smith}, a wrapped Fukaya category is constructed without the exactness assumption (i.e., without assuming the existence of a $\theta$ defined on all of $M$).
\end{remark}

\begin{example}
If $M = \RR^0$ is a point, $\theta = 0$ renders $M$ the completion of a Liouville domain.
\end{example}

\begin{example}
If $M = T^*Q$ is the cotangent bundle of any smooth manifold $Q$, the usual Liouville form renders $M$ the completion of a Liouville domain, where $M^0$ can be taken to be a relatively compact neighborhood of the zero section. 
\end{example}

\begin{example}
As a case of the above example, let $M = T^*\RR$ be the cotangent bundle of $\RR$ with $\theta = pdq$. Though it is common instead to take $\theta = 1/2(pdq - q dp)$ (which has rotational symmetry), we prefer $pdq$ for the translational symmetry, and to preserve the functoriality of the assignment $Q \mapsto T^*Q$. Note that in this example, $M^0$ is non-compact, as is the skeleton.
\end{example}

\begin{example}
Let $(M_1,\theta_1)$ and $(M_2,\theta_2)$ be completions of Liouville domains. Then their product $M_1 \times M_2$, with the direct sum 1-form $\theta =\theta_1 \oplus \theta_2$, is another completion of a Liouville domain. Because we do not insist on a fixed choice of $M_1^0$ or $M_2^0$, there is no need to ``smooth the corner'' of a product $M_1^0 \times M_2^0$. 
\end{example}

\subsection{Eventually conical submanifolds}
Because $M$ is non-compact, we desire control over the non-compactness of our Lagrangian submanifolds.

\begin{defn}
We say a submanifold $L \subset M$ is {\em eventually conical} if the following is satisfied:

$L$ can be written as a union $L^0 \bigcup_{\del L^0} (\del L^0 \times \RR_{\geq 0})$, where $L^0$ is a compact manifold, possibly with boundary $\del L^0$, and $\del L^0 \times \RR_{\geq 0}$ has $\RR_{\geq 0}$ parametrized by the flow by $X_\theta$. 

(Put another way, outside a compact set, $L^0$ is invariant under the positive flow of $X_\theta$.)
\end{defn}

\begin{example}
If $M = T^*Q$ is a cotangent bundle with the usual Liouville form, any conormal to a submanifold $A \subset Q$ is a conical Lagrangian, hence eventually conical. 
\end{example}

\begin{example}
More generally, one can take a compactly supported Hamiltonian isotopy of any eventually conical Lagrangian, and the result is still eventually conical.
\end{example}

\begin{example}
Any compact submanifold of $M$ is eventually conical. (One takes $L = L^0$ and the boundary $\del L^0$ is empty.)
\end{example}

\subsection{Brane structures}

In Floer theory, various geometric decorations allow one to construct Floer complexes of different flavors. In this paper, we require ourselves to endow Lagrangians with the following structures:
\enum
\item A grading $\alpha:L \to \RR$ (Definition~\ref{defn.grading})
\item A $Pin$-structure (Definition~\ref{defn.pin-structure}), and
\item A primitive $f: L \to \RR$ for the Liouville form $\theta|_L$ (Definition~\ref{defn.primitive}).
\enumd

\begin{defn}\label{defn.brane}
A {\em brane} is the data of an eventually conical Lagrangian $L$ equipped with the above three pieces of data. We will often abuse notation and refer to the brane as $L$, making the brane structure implicit in the notation and surrounding language.
\end{defn}

\begin{remark}
In Floer theory, the above three structures have the following consequences: Exactness permits us to ignore Novikov variables and admit convexity arguments to ensure Gromov compactness; a grading allows us to $\ZZ$-grade Floer complexes; and a Pin structure allows us to use $\ZZ$-linear coefficient rings. 
\end{remark}

\begin{remark}
In the theory of Lagrangian cobordisms, one need not specify brane structures on objects or morphisms to obtain a stable $\infty$-category; however, in parallel to usual cobordims theory, there are different Lagrangian cobordism $\infty$-categories one can construct by demanding that objects and morphisms are equipped with brane structures. 

In this paper we pay close attention to gradings and $Pin$ structures simply because the $\infty$-category associated to these brane structures admits a functor to (finite modules over) a Fukaya category whose objects also have these brane structures~\cite{tanaka-pairing}.
\end{remark}

\begin{notation}\label{notation.GrLag}
Let $\GrLag$ denote the stable Lagrangian Grassmannian; informally, $\GrLag$ is the parameter space for linear Lagrangian subspaces in $\CC^\infty$. Formally, it is defined as the colimit space of the sequential diagram
	\eqnn
	U_0/O_0 \to U_1 / O_1 \to \ldots
	\eqnd
where $U_n$ is the unitary group, $O_n$ is the orthogonal group, and $U_n/O_n$ is the quotient space. The map $U_n/O_n \to U_{n+1}/O_{n+1}$ is obtained by the map
	\eqnn
	[A] \mapsto
	\left[
	\begin{array}{cc}
	1 & 0 \\
	0 & A
	\end{array} 
	\right]
	\eqnd
which, in terms of Lagrangian subspaces, sends $V \subset \CC^n$ to the Lagrangian $\RR \oplus V \subset \CC \times \CC^n \cong \CC^{n+1}$.
\end{notation}

\begin{notation}\label{notation.GrLag_M}
Fix $M$ a symplectic manifold of dimension $2n$. A choice of compatible almost-complex structure renders $TM$ a complex vector bundle, hence induces a map $M \to BU_n$. Then the fiber bundle of Lagrangian Grassmannians over $M$ is obtained by pulling back the map $BO_n \to BU_n$ along the map $M \to BU_n$. We call the bundle $\GrLag_M$. 
\end{notation}

\begin{remark}
Since the space of almost complex structures compatible with $\omega$ is contractible, once one fixes a symplectic form $\omega$, the map $M \to BU_n$ is unambiguously defined up to contractible choice.
\end{remark}

\begin{remark}\label{remark.L-tangent-classifier}
Given a Lagrangian $L \subset M$, one has a commutative diagram
	\eqnn
	\xymatrix{
	& \GrLag_M \ar[r] & O_n \ar[d] \\
	L \ar[r] \ar[ur] & M \ar[r]^{TM} & U_n
	}
	\eqnd
where the map $L \to \GrLag_M$ is the Lagrangian Gauss map.
Moreover, the composite map $L \to \GrLag_M \to O_n$ is precisely the map classifying the tangent bundle of $L$. If one stabilizes $M$ to $M \times T^*\RR$, and $L$ to $L \times \RR^\vee$ (where $\RR^\vee$ is a cotangent fiber), one obtains an obvious, analogous diagram.
\end{remark}

\begin{remark}
Though we have phrased the fact that the composition $L \to \GrLag_M \to O_n$ classifies the tangent bundle $TL$ as a {\em property}, for future purposes, one should note that a brane structure is actually the additional data specifying a homotopy between the composition and the classifying map $L \to O_n$.
\end{remark}

\begin{remark}
We have assumed that $2c_1(TM) = 0$. (See Assumption~\ref{assumption.M}.) In particular, every Lagrangian $L \subset M$ is equipped with a phase-squared map $L \to S^1$ which can locally be expressed as the composition
	\eqnn
	V \xra{TV} \GrLag_M|_V \cong V \times U_n/O_n\xra{\det^2} S^1.
	\eqnd
Here, $|_V$ indicates the restriction of $\GrLag_M$ to a subset $V \subset L$. That this locally defined function can be turned into a global function to $S^1$ is a consequence of the vanishing of $2c_1(TM)$.
\end{remark}

\begin{defn}\label{defn.grading}
A {\em grading} on $L$ is a lift $\alpha: L \to \RR$ of the phase-squared map.
\end{defn}

\begin{remark}
In this paper, the map $\RR \to S^1$ is taken to be the map whose kernel consists of $\ZZ \subset \RR$. See Convention~\ref{convention.grading}.
\end{remark}

\begin{remark}\label{remark.grading-torsor}
The obstruction to such a lift is given by the element in $H^1(L;\ZZ)$ classified by the map $L \to S^1 \simeq K(\ZZ,1)$. If this obstruction vanishes, the collection of all lifts is a torsor over the group $H^0(L;\ZZ)$ of locally constant, integer-valued functions.
\end{remark}

Finally, recall that there is a Lie group extension of the orthogonal group $O_n$, classified by the Stiefel-Whitney class $w_2 \in H^2(BO;\ZZ/2\ZZ)$. This extension will be called $Pin_n$ following the book of Seidel~\cite{seidel-book}. The map $\pin_n \to O_n$ results in a map of classifying spaces $B\pin_n \to BO_n$. Finally, recall that any smooth $n$-manifold has a canonical map to $BO_n$ classifying the tangent bundle.

\begin{defn}\label{defn.pin-structure}
A $Pin$-structure on a Lagrangian $L$ is a lift of the map $L \to BO_n$ to $B\pin_n$. We consider two $Pin$-structures to be equivalent if they are homotopic lifts (i.e., homotopic rel the fixed map to $BO_n$).
\end{defn}

\begin{remark}\label{remark.pin-torsor}
The obstruction to the existence of a $Pin$-structure is classified by $w_2(TL) \in H^2(L;\ZZ/2\ZZ)$. The space of possible $Pin$-structures is a torsor over $H^1(L;\ZZ/2\ZZ)$---concretely, the latter classifies real line bundles over $L$, and tensoring with a line bundle on $L$ changes a given $Pin$-structure on $TL$.
\end{remark}

\begin{notation}[$L^\sigma$]\label{notation.L-sigma}
Let $L$ be a Lagrangian equipped with a fixed grading and a fixed $Pin$-structure. Let $\sigma \in H^0(L;\ZZ) \oplus H^1(L;\ZZ/2\ZZ)$. We let $L^\sigma$ denote the same Lagrangian $L$, but equipped with the grading and $Pin$-structure induced by $\sigma$. (Here we utilize the torsor structure from Remarks~\ref{remark.pin-torsor} and~\ref{remark.grading-torsor}.)
\end{notation}

\begin{remark}\label{remark.classifying-grassmannian}
The obstruction to admitting a brane structure is classified by a map
	\eqnn
	L \to K(\ZZ/2\ZZ,2) \times K(\ZZ,1).
	\eqnd
Moreover, one can show that this classifying map factors through $\GrLag_M$. Thus to show that $L$ admits a brane structure, it suffices to show that the natural map classifying the Lagrangian tangent bundle of $L$ admits a lift
	\eqnn
	\xymatrix{
	& \widetilde{\GrLag_M} \ar[d] \\
	L \ar[r] \ar@{-->}[ur] & \GrLag_M
	}
	\eqnd
where $\widetilde{\GrLag_M} \to \GrLag_M$ is a fibration with fiber $\RR P^\infty \times \ZZ$.  See Remark~\ref{remark.brane-structures} for more generalities on brane structures.
\end{remark}

\begin{defn}\label{defn.primitive}
A {\em primitive} for a Lagrangian $L$ is a choice of smooth function $f: L \to \RR$ such that $df = \theta|_L$.
\end{defn} 

\begin{remark}\label{remark.primitives-along-boundary-vanish}
$f$ is necessarily locally constant along $\del L^0$. However, note that if $\del L^0$ has multiple connected components, one cannot always choose $f$ to equal zero along $\del L^0$. While the assumption that one can choose $f|_{\del L^0} = 0$ was utilized in~\cite{abouzaid-seidel} to obtain Gromov compactness, one does not need this assumption~\cite{gao-wrapped-floer}.
\end{remark}

\subsection{Eventually conical branes in products}
It is a well-known annoyance that products of eventually conical subsets need not be eventually conical. Here we present a workaround definition; the idea is to {\em remember} the data of the two projection maps $M_1 \times M_2 \to M_i$. Similar workarounds can be found in~\cite{gao-wrapped-floer}.

\begin{notation}\label{notation.product-domains}
Let $M_i$ be completions of Liouville domains with Liouville forms $\theta_i$, $i=1,2$. Then we consider the product $M_1 \times M_2$ to be a completion of a Liouville domain by endowing it with the form $\theta_1 \oplus \theta_2$. Explicitly, this 1-form acts by sending a tangent vector $(u_1,u_2) \in TM_1 \times TM_2 \cong T(M_1 \times M_2)$ to the number $\theta(u_1) + \theta(u_2)$. 
\end{notation}

\begin{defn}\label{defn.conical-in-product}
Let $M_i$ be a completion of a Liouville domain for $i=1,2$. Then a subset $L \subset M_1 \times M_2$ is said to be eventually conical {\em in the product} if one of the following holds:
\enum
\item $L$ is eventually conical in $M_1 \times M_2$ (with respect to the Liouville form $\theta_1 \oplus \theta_2$), or
\item Outside of a compact set, $L$ can be written as a finite disjoint union of products
	\eqn\label{eqn.conical-in-product}
	\coprod_i L_1^{(i)} \times L_2^{(i)}
	\eqnd
where for every $i$, $L_1^{(i)} \subset M_1$ is an eventually conical subset, as is $L_2^{(i)} \subset M_2$.
\enumd
\end{defn}

\begin{example}
If $L_1$ and $L_2$ are eventually conical in $M_1$ and $M_2$, respectively, then $L_1 \times L_2 \subset M_1 \times M_2$ is eventually conical in the product.
\end{example}

\begin{remark}\label{remark.product-gromov-compactness}
If $L$ and $L'$ are eventually conical in the product $M_1 \times M_2$, then one can define wrapped Floer theory for $L$ and $L'$. The only point to check is that Gromov compactness holds. This follows because if one is given a tuple of Hamiltonian chords, one can place uniform energy estimates on any disk limiting to these chords via constants that only depend on a finite number of parameters---a constant associated to a compact region, and a constant associated to each connected component $i$ in Equation~\eqref{eqn.conical-in-product}. See for instance~\cite{gao-wrapped-floer}.
\end{remark}

\subsection{Vertically collared branes}

Let $M$ be the completion of a Liouville domain. (See Section~\ref{section.liouville-completions}.) 

\begin{defn}\label{defn.vertically-collared}
A Lagrangian $L \subset T^*\RR \times M$ is called {\em vertically collared} if the following holds: There exists some compact subset $K \subset T^*\RR$, a collection of curves $\gamma_i \subset T^*\RR \setminus K$ that are eventually conical, and a collection of Lagrangians $X_i \subset M$ that are eventually conical, such that
	\eqn\label{eqn.vertically-collared}
	L\setminus (K \times M)
	=
	\coprod_i \gamma_i \times X_i.
	\eqnd
See Figure~\ref{figure.vertically-collared}. We further say that a {\em brane} $L \subset T^*\RR \times m$ is vertically collared if the equality above is an equality of branes.
\end{defn}

\begin{remark}
Let us explain what we mean by an equality of branes. We endow each $\gamma_i$ with the grading such that, where $\gamma_i$ is collared with $p>>0$, $\alpha_{\gamma_i} = -1/2$. (This is compatible with the convention from Section~\ref{section.stabilization}.) Given two branes, their direct product inherits an induced brane structure; this explains the brane structure we equate in~\eqref{eqn.vertically-collared}.  
\end{remark}

\begin{figure}
		\[
			\xy
			\xyimport(8,8)(0,0){\includegraphics[width=1in]{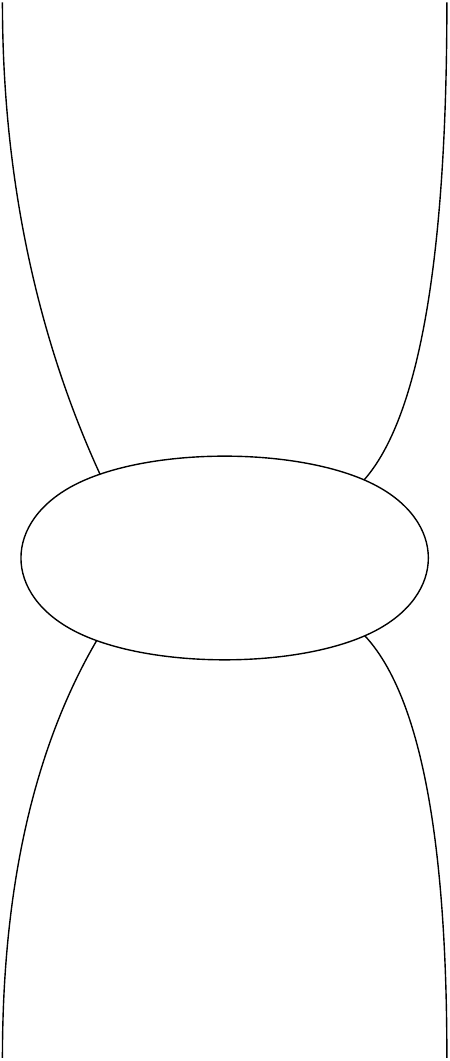}}
			,(-0.8,7)*+{X_{n}}
			,(4,7)*+{\ldots}
			,(8.8,7)*+{X_1}
			,(-0.8,0.4)*+{X_1'}
			,(4,0.4)*+{\ldots}
			,(9,0.4)*+{X_{n'}'}
			\endxy
		\]
\caption{
The projection of a vertically collared Lagrangian $L \subset T^*\RR \times M$ to $T^*\RR$. See~\eqref{eqn.vertically-collared-prime} for an explanation of the labels $X_i$ and $X_j'$.
}\label{figure.vertically-collared}
\end{figure}

\section{Lagrangian cobordism preliminaries}
Here we recall some basic facts about the Lagrangian cobordism $\infty$-category $\lag(M)$ of~\cite{nadler-tanaka}. All facts are given without proofs, which can be found in the papers~\cite{nadler-tanaka,tanaka-exact,tanaka-pairing}.

\subsection{Stabilization}\label{section.stabilization}

Given a brane $L \subset M$, there is a natural way to produce another brane in $T^*E \times M$, where $E = \RR$ is the real line. Namely, let $l = T^*_0 E$ denote the cotangent fiber at $0 \in E$, endowed with the grading $\alpha_l = -1/2$. Then there is an induced brane structure on $l \times L \subset T^*E \times M$: One sets $\alpha_{l \times L}((0,p),x) = \alpha(x) - 1/2.$

\begin{notation}
We will denote the brane $l \times L$ by $L^\dd$.
\end{notation}

\subsection{Sketch definition of $\lag(M)$}
Let $\lag^{\dd 0}(M)$ denote the $\infty$-category (i.e., quasi-category) whose objects are eventually conical branes inside $M$, and whose 1-simplices are branes $Q \subset T^*\RR \times M$ which avoid the skeleton of $M$ in the negative cotangent direction fo $T^*\RR$, and which are collared outside a compact interval of the zero section. The $k$-simplices for $k \geq 2$ are higher Lagrangian cobordisms also avoiding the skeleton appropriately. For details, we refer to~\cite{nadler-tanaka}, in terms of the notation there, we set $\Lambda = \sk(M)$ in this paper.

The stabilization procedure yields maps of $\infty$-categories
	\eqnn
	\lag^{\dd 0}(M) \to \lag^{\dd 0}(T^*\RR \times M) \to \ldots
	\to \lag^{\dd 0}(T^*\RR^n \times M) \to \ldots
	\eqnd
and we let $\lag(M)$ denote the union of this sequence.

\begin{remark}\label{remark.stabilization-equivalence}
By construction, a brane $L \subset T^*\RR^n \times M$ is the same object as $L^{\dd} \subset T^*\RR^{n+1} \times M$ in the $\infty$-category $\lag(M)$.
\end{remark}

\subsection{Eventually linear Hamiltonian isotopies induce equivalences}
Given any time-dependent Hamiltonian $H_t$ on $M$ and a brane $L \subset M$, one can construct a Lagrangian cobordism from $L$ to its flow by the Hamiltonian so long as $H_t = 0$ for $|t|>>0$. We say that $H_t$ is {\em eventually linear} if for every time $t$, and outside a relatively compact neighborhood of $\sk(M)$, we have that $X_\theta(H_t) = H_t$. (The ``eventually'' refers to distance away from the skeleton, and not to the time coordinate.)

\begin{prop}\label{prop.linear-isotopy-equivalence}
Let $H_t: M \to \RR$ be an eventually linear, time-dependent Hamiltonian isotopy which is compactly supported in time. Then $H$ induces an equivalence in $\lag(M)$ from a brane $L \subset M$ to its image under the Hamiltonian isotopy induced by $H_t$.
\end{prop}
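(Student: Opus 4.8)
The plan is to realize the Hamiltonian isotopy as an honest Lagrangian cobordism in $T^*\RR \times M$ and then invoke the existing machinery of $\lag(M)$ that recognizes when such a cobordism defines an equivalence (i.e., an invertible morphism in the $\infty$-category). First I would use the standard suspension construction: given $H_t$ with $H_t \equiv 0$ for $|t| \gg 0$, one forms the suspension Lagrangian $\Gamma_H \subset T^*\RR \times M$ whose slice over $t$ in the zero-section direction is the time-$t$ image $\phi_{H}^t(L)$ (suitably reparametrizing the time variable so that it is constant near the two ends). Concretely, $\Gamma_H = \{(t, -\mathcal H_t(x), \phi_H^t(x)) : t \in \RR,\ x \in L\}$ for an appropriate primitive $\mathcal H_t$, and the condition that $H_t$ is compactly supported in time ensures that $\Gamma_H$ is collared outside a compact interval of the zero section, with both collars equal to (the cotangent-fiber-type curve) $\times L$ up to the brane bookkeeping.

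\smallskip

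The key steps, in order: (1) Check that $\Gamma_H$ is eventually conical in the product $T^*\RR \times M$ (Definition~\ref{defn.conical-in-product}). This is where eventual linearity of $H_t$ enters: outside a relatively compact neighborhood of $\sk(M)$ the condition $X_\theta(H_t) = H_t$ makes the Hamiltonian flow commute appropriately with the Liouville flow, so that $\phi_H^t$ preserves the conical structure of $L$ and the graph stays eventually conical rather than drifting. (2) Check that $\Gamma_H$ avoids $\sk(M)$ in the negative cotangent direction of $T^*\RR$ — again a consequence of compact support in time together with the collar description, so that near the ends the cobordism is literally a product with an eventually conical curve of the required type. (3) Equip $\Gamma_H$ with a brane structure: the grading, $Pin$-structure, and primitive are transported along the flow (the flow of a Hamiltonian acts on all three pieces of brane data), and one checks the induced brane structure on each collar agrees with the stabilized brane structure $L^\dd$ on the two ends, using the $\alpha = -1/2$ normalization from Section~\ref{section.stabilization}. (4) Conclude that $\Gamma_H$ is a morphism $L \to \phi_H^1(L)$ in $\lag^{\dd 0}(T^*\RR \times M)$, hence in $\lag(M)$. (5) Show it is an equivalence: run the same construction for the reversed/inverse isotopy to produce a candidate inverse, and check that the concatenations are Lagrangian-cobordant (via a higher cobordism — a $2$-simplex built from the obvious homotopy of Hamiltonians $H \# \bar H \simeq 0$) to the identity cobordism. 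Equivalently, invoke the standard fact from~\cite{nadler-tanaka} that a cobordism all of whose nonconstant behavior is supported in a compact region and which is a "trace" of an isotopy is invertible in the homotopy category, which suffices for equivalence in an $\infty$-category.

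\smallskip

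The main obstacle I expect is step (1) combined with step (3): ensuring that the suspension of an \emph{eventually linear} (as opposed to compactly supported) Hamiltonian genuinely stays eventually conical in the product and carries a well-defined brane structure compatible with the ends. Compactly supported Hamiltonians are easy; the whole point of the "eventually linear" hypothesis is to allow flows that move the conical ends of $L$ (e.g. the wrapping-type flows one actually cares about), and one must verify that the linear growth condition $X_\theta(H_t) = H_t$ is exactly what guarantees the graph has the product-conical form of Definition~\ref{defn.conical-in-product} near infinity, with matching primitives on the collars. Once the geometry of $\Gamma_H$ is pinned down, the $\infty$-categorical conclusion (that a trace cobordism is an equivalence) should be a formal citation to the foundational properties of $\lag(M)$ established in~\cite{nadler-tanaka, tanaka-exact}.
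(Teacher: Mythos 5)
The paper itself gives no proof of this proposition---it simply refers the reader to \cite{tanaka-exact} for the details---so there is nothing in the text to compare against directly. That said, your proposal is precisely the standard route taken in the cited references: form the suspension/trace cobordism $\Gamma_H$, verify that the eventual-linearity condition $X_\theta(H_t)=H_t$ forces $[X_\theta,X_H]=0$ away from the skeleton (so the flow commutes with the Liouville flow and preserves eventual conicality in the product), transport the grading, $\pin$-structure, and primitive along the flow, and then exhibit an inverse cobordism from $\bar H$ together with a $2$-simplex witnessing $H\#\bar H\simeq 0$ to conclude invertibility in $\lag(M)$. You have correctly identified the one genuinely substantive point---that eventual linearity (rather than compact support) is exactly what keeps $\Gamma_H$ eventually conical in the sense of Definition~\ref{defn.conical-in-product}---so this is a faithful reconstruction of the intended argument.
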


We refer the reader to~\cite{tanaka-exact} for details.

\subsection{$\Lambda$-avoiding objects are zero objects}
We also have the following basic property:

\begin{prop}\label{prop.lambda-avoiding-zero-objects}
Let $L \subset T^*\RR^n \times M$ be an object of $\lag(M)$. If there is a neighborhood $U$ of $\RR^n \times \sk(M)$ such that $L \cap U = \emptyset$, then $L$ is a zero object.
\end{prop}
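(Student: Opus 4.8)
The plan is to exhibit $L$ as both the source and target of a canonical nullhomotopic self-map, using the geometric flexibility afforded by the hypothesis that $L$ avoids a neighborhood $U$ of $\RR^n \times \sk(M)$. Concretely, to show $L$ is a zero object it suffices to show that the identity morphism $\id_L$ is homotopic to the zero morphism in the mapping space $\lag(M)(L,L)$ — equivalently, that $\id_L$ factors through an empty (or $\Lambda$-avoiding) cobordism. The key point is that, because $L$ lives entirely outside $U$, we have enormous freedom to translate and bend $L$ in the $T^*\RR$-directions without ever interacting with the skeleton: any Hamiltonian isotopy supported away from $U$ is automatically admissible as a cobordism in $\lag(M)$.

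First I would recall (from Section~\ref{section.stabilization} and Remark~\ref{remark.stabilization-equivalence}) that $L$, as an object of $\lag(M)$, may be replaced by any stabilization $L^\dd \subset T^*\RR^{n+1} \times M$. Stabilizing once, I would work in $T^*\RR^{n+1} \times M$ and use the extra cotangent factor to produce a cobordism from $L^\dd$ to the empty brane $\emptyset$: push the curve $l = T^*_0\RR$ in the new $T^*\RR$ factor off to ``infinity'' along the zero section, via an eventually linear Hamiltonian isotopy that is the identity near $\sk(M)$ (vacuously, since $L^\dd$ is disjoint from a neighborhood of the skeleton there too). Because $l$ is a non-compact cotangent fiber, sliding its basepoint along $\RR$ and then ``completing'' the cobordism by letting the brane exit every compact set realizes a $1$-simplex in $\lag(M)$ from $L^\dd$ to $\emptyset$. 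Dually, reading the same cobordism backwards (or reflecting the $\RR$-coordinate) gives a $1$-simplex from $\emptyset$ to $L^\dd$. Since $\emptyset$ is manifestly a zero object (it has a unique — empty — cobordism to and from anything, as it is both initial and terminal in the cobordism $\infty$-category), the existence of morphisms $L^\dd \to \emptyset$ and $\emptyset \to L^\dd$ would already show $L^\dd$ receives a map from and maps to a zero object; to conclude $L^\dd$ \emph{is} zero I would then check that the composite $L^\dd \to \emptyset \to L^\dd$ is homotopic to $\id_{L^\dd}$, using a $2$-simplex (a higher cobordism, again supported away from $\sk(M)$) that witnesses the concatenation of the two $1$-simplices as homotopic to the identity cobordism on $L^\dd$.

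The technical heart, and the main obstacle, is verifying that these cobordisms and the witnessing $2$-simplex genuinely satisfy the admissibility conditions defining simplices of $\lag^{\dd 0}(T^*\RR^{n+1} \times M)$: they must avoid the skeleton $\sk(M)$ in the negative cotangent direction of the auxiliary $T^*\RR$ factors, be collared outside a compact interval of the zero section, and carry compatible brane structures (grading, $Pin$, primitive). The skeleton-avoidance is where the hypothesis does all the work — since $L$ (hence every brane appearing in the construction) is uniformly disjoint from a neighborhood $U$ of $\RR^n \times \sk(M)$, and we only ever move things by isotopies supported in the complement of $U$, no part of any cobordism ever meets the forbidden region. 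The collaring and brane-compatibility conditions are then routine: one arranges the isotopies to be translation in the $\RR$-coordinate outside a compact set, which is eventually linear and preserves the collared structure, and brane structures transport along isotopies by the general machinery behind Proposition~\ref{prop.linear-isotopy-equivalence}. I would also double-check the edge case $n = 0$ and the possibility that $L$ is already empty (trivial), and note that the argument is insensitive to whether $\sk(M)$ is compact, since only a neighborhood of it is relevant.

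\textbf{Alternative, cleaner route.} Rather than building the contraction by hand, I would prefer to phrase it as: the full subcategory of $\lag(M)$ on branes disjoint from a fixed neighborhood of $\RR^n \times \sk(M)$ is equivalent to a Lagrangian cobordism $\infty$-category $\lag(M \setminus \bar U)$ built over the \emph{skeleton-free} Liouville sector $M \setminus \bar U$ — and over a skeleton-free target, \emph{every} eventually conical brane can be displaced off every compact set by an eventually linear Hamiltonian isotopy (there is no skeleton to obstruct the displacement), so Proposition~\ref{prop.linear-isotopy-equivalence} identifies $L$ with a brane supported near ``infinity,'' which a telescoping/shift argument (of the kind used to show colimits of shifted copies vanish) identifies with $0$. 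In this formulation the only thing to prove is the displaceability claim, which is elementary once $\sk(M) = \emptyset$: flow by (a cutoff of) $-X_\theta$ pushes any eventually conical $L$ monotonically toward the region where it is conical and then off to infinity. I expect this second approach to be the one actually taken, with the displaceability step being the sole nontrivial input.
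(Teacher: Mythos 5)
The conceptual heart of your argument is right, and you correctly identify where the hypothesis earns its keep: once $L$ avoids a neighborhood of $\RR^n \times \sk(M)$, any brane of the form $\gamma \times L$ (with $\gamma$ an admissible curve in the cobordism $T^*\RR$ factor) automatically satisfies the skeleton-avoidance condition in the negative cotangent direction, so one has unlimited freedom to build product cobordisms and the $2$-simplices interpolating between them. The paper itself only cites \cite{nadler-tanaka} for a proof, and that reference's argument is indeed of the ``$\id_L$ is nullhomotopic because all the required higher cobordisms can be taken of product form'' flavor that you gesture at.

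However, both of your concrete mechanisms for producing the requisite morphism to $\emptyset$ are geometrically off. In Route $1$, you slide the stabilizing fiber $l = T^*_0\RR$ along the zero section of the \emph{extra stabilization} $T^*\RR$ factor. That is a Hamiltonian isotopy, so by Proposition~\ref{prop.linear-isotopy-equivalence} it yields only an \emph{equivalence} $L^\dd \simeq (\text{translate of } l)\times L$, never a cobordism to $\emptyset$: morphisms in $\lag(M)$ are required to be collared outside a compact $q$-interval of the \emph{cobordism} $T^*\RR$ factor, so the collared end at $q\gg 0$ is whatever brane sits there, not ``empty by virtue of being far away.'' A genuine morphism $L \to \emptyset$ must instead \emph{escape in the negative cotangent direction of the cobordism $T^*\RR$}: take $\gamma\times L$ where $\gamma$ is a curve that is the zero section for $q\ll 0$ and becomes a downward vertical ray $p\to-\infty$ at some finite $q$; this is admissible precisely because $L$ avoids the skeleton, and for a general $L$ it would be forbidden. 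Route $2$'s displacement claim is likewise backwards: $-X_\theta$ flows \emph{toward} the skeleton, and $+X_\theta$ fixes the conical end of an eventually conical $L$ pointwise, so neither pushes $L$ off every compact set (and even if it did, Hamiltonian displacement to ``near infinity'' does not by itself furnish the cobordism to $\emptyset$ that the categorical argument needs). So the missing idea is the correct escape direction --- negative fiber direction of the cobordism coordinate, not translation in the stabilization coordinate --- after which your proposed $2$-simplex witnessing $L\to\emptyset\to L \simeq \id_L$ (a product of $L$ with a planar isotopy of curves, admissible for the reason you give) does close the argument.
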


We refer to~\cite{nadler-tanaka} for the proof.

\subsection{Mapping cones and mapping kernels}\label{section.mapping-cone}
Fix a morphism $Q: L_0 \to L_1$ in an arbitrary $\infty$-category. Recall that a {\em cofiber}, or {\em mapping cone} for $Q$ is a pushout diagram
	\eqnn
	\xymatrix{
	L_0 \ar[r]^Q \ar[d] & L_1 \ar[d] \\
	0 \ar[r] & \cone(Q)
	}
	\eqnd
where $0$ is a zero object. Likewise, a {\em fiber}, or {\em (homotopy) kernel} of $Q$ is a pullback diagram
	\eqnn
	\xymatrix{
	\ker(Q) \ar[d] \ar[r] & L_0 \ar[d]^Q \\
	0 \ar[r] & L_1.
	}
	\eqnd
The main result of~\cite{nadler-tanaka} depends on the following:

\begin{theorem}\label{theorem.mapping-cone}
Any morphism $Q: L_0 \to L_1$ in $\lag(M)$ admits a mapping cone and a homotopy kernel. 
\end{theorem}

\begin{remark}
Moreover, $\kernel(Q)$ can be modeled informally as follows: If $Q$ is collared by $L_0$ along $(-\infty,-T] \subset \RR$ and by $L_1$ along $[T,\infty) \subset \RR$, isotope $(-\infty,-T]$ into a curve beginning at $-T \in \RR$ and eventually becoming conical in the negative cotangent direction; do the same for $[T,\infty)$. We refer the reader to~\cite{nadler-tanaka} for details, and to Figure~\ref{figure.LL3} for an image of the kernel. 
\end{remark}

\begin{remark}
By formal non-sense for stable $\infty$-categories, the kernel is obtained from the mapping cone by a loop functor. (And the mapping cone is obtained from the kernel by a suspension fucntor.) In the setting of Lagrangian cobordisms, these functors do not change the underlying Lagrangian, but change the brane structure by the obvious shift.
\end{remark}

\begin{example}
In $\lag(M)$, a zero object is given by the empty Lagrangian. Take a curve $\Gamma$ as in Notation~\ref{notation.Gamma} below, and consider $\Gamma \times L \subset T^*\RR \times M$ as a morphism from $\emptyset$ to $L$. We call the kernel of this map $\Omega L$ following the conventions from stable homotopy theory. $\Omega L$ is easily computed to be Hamiltonian isotopic to $L^{\dd}$, but with grading shifted by $+1$. That is, after a suitable Hamiltonian isotopy, one can compute that 
	\eqnn
	\alpha_{\Omega L} = \alpha_{L^{\dd}} + 1.
	\eqnd
Dually, if computing the mapping cone $\Sigma L$ of a map $L \to \emptyset$, we find
	\eqnn
	\alpha_{\Sigma L} = \alpha_{L^{\dd}} - 1.
	\eqnd
\end{example}

Finally, we will use the following standard categorical fact:

\begin{prop}\label{prop.cone-zero-equivalence}
Let $Q: L_0 \to L_1$ be a morphism in any stable $\infty$-category with a zero object. Assume that the pushout
	\eqnn
	\xymatrix{
	L_0  \ar[d] \ar[r]^Q & L_1 \ar[d] \\
	0 \ar[r] & \cone(Q)
	}
	\eqnd
exists, and that $\cone(Q)$ is a zero object. Then $Q$ is an equivalence. 
\end{prop}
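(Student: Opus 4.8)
The plan is to prove this purely formally, using only the axioms of a stable $\infty$-category (existence of a zero object and the fact that pushout squares and pullback squares coincide). First I would recall the standard fact that in a stable $\infty$-category, a commutative square is a pushout if and only if it is a pullback. Applying this to the given square (which is assumed to be a pushout), we deduce that it is also a pullback square. In other words, the diagram exhibits $L_0$ as the fiber of the map $L_1 \to \cone(Q)$.

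Next I would use the hypothesis that $\cone(Q)$ is a zero object. The key sub-step is that the fiber of a morphism whose target is a zero object is computed by the map itself: if $0'$ is a zero object, then the square
	\eqnn
	\xymatrix{
	L_1 \ar[r]^{\id} \ar[d] & L_1 \ar[d] \\
	0 \ar[r] & 0'
	}
	\eqnd
is a pullback (indeed both the vertical and the lower horizontal maps factor through zero objects, and one checks the universal property directly, or invokes that mapping into a contractible object imposes no condition). Hence $\fib(L_1 \to \cone(Q)) \simeq L_1$ canonically, compatibly with the map to $L_1$.

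Combining the two previous steps: we have two pullback squares with the same cospan $L_1 \to \cone(Q) \leftarrow 0$, one exhibiting the fiber as $L_0$ via $Q$, and one exhibiting it as $L_1$ via $\id_{L_1}$. By the essential uniqueness of limits in an $\infty$-category, the induced comparison map is an equivalence, and chasing the diagram shows this comparison map is precisely (homotopic to) $Q$. Therefore $Q$ is an equivalence.

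I do not expect any genuine obstacle here — the statement is a formal consequence of stability and is surely meant to be cited rather than belabored. The only point requiring a modicum of care is making the identification $\fib(L_1 \to 0') \simeq L_1$ and verifying that, under the uniqueness-of-pullbacks equivalence, the resulting self-map of the fiber object matches $Q$ rather than some other map; this is handled by noting that all the structure maps in sight commute with the projections to $L_1$, so the comparison is a map over $L_1$ from $L_0 \xrightarrow{Q} L_1$ to $\id_{L_1}$, forcing it to be $Q$ up to homotopy. Alternatively, one can phrase the whole argument as: the long (co)fiber sequence $L_0 \xrightarrow{Q} L_1 \to \cone(Q)$ with $\cone(Q) \simeq 0$ forces $Q$ to be an equivalence, since rotating the triangle gives $\Omega \cone(Q) \to L_0 \to L_1$ with $\Omega \cone(Q) \simeq 0$, whence $L_0 \to L_1$ is an equivalence.
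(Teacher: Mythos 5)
Your proof is correct and rests on the same two ingredients as the paper's: stability turns the pushout into a pullback, and $\cone(Q)$ being a zero object forces the conclusion. The paper packages the second step more tersely — it observes that the bottom arrow $0 \to \cone(Q)$ is an equivalence of zero objects and then invokes that the pullback of an equivalence is an equivalence — whereas you compute $\fib(L_1 \to 0) \simeq L_1$ and compare pullback squares, but these are the same argument.
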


\begin{proof}
If $\cone(Q)$ is a zero object, the bottom horizontal arrow is an equivalence. Since the $\infty$-category is stable, the square is also a pullback, and the pullback of an equivalence is an equivalence.
\end{proof}

\subsection{Disjoint union is coproduct}

Finally, given two branes $L_0, L_1$ in $M \times T^*\RR^n$, their categorical coproduct can be realized geometrically:

\begin{prop}\label{prop.disjoint-union-coproduct}
Let $T^*_{q_0} \RR \times L_0$ and $T^*_{q_1} \RR \times L_1$ be two stabilizations of $L_0$ and $L_1$, respectively. If $q_0 \neq q_1$, then the disjoint union brane
	\eqnn
	T^*_{q_0} \RR \times L_0
	\coprod
	T^*_{q_1} \RR \times L_1
	\eqnd
is a categorical coproduct $L_0 \coprod L_1$ in $\lag(M)$.
\end{prop}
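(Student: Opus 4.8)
Since $\lag(M)$ is a stable $\infty$-category, it already possesses finite coproducts; the content of the proposition is that the disjoint-union brane, equipped with the evident inclusion cobordisms, \emph{represents} one. The cleanest route is to exhibit the disjoint union as the total object of a \emph{split cofiber sequence}: in a stable $\infty$-category, a split (co)fiber sequence is a biproduct diagram, so its total object is simultaneously the product and the coproduct of the two ends, and all higher coherences are inherited from the stable structure of $\lag(M)$ established in~\cite{nadler-tanaka}. Throughout, write $A_j := T^*_{q_j}\RR \times L_j$, which is the object $L_j$ of $\lag(M)$ by Remark~\ref{remark.stabilization-equivalence}, and set $B := A_0 \sqcup A_1$.

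\textbf{Step 1: structure morphisms.} Using $q_0 \neq q_1$, so that the two heights can be manipulated independently, I would construct the following cobordisms as disjoint unions of strands supported over disjoint regions of the morphism factor $T^*\RR$. The inclusion $i_0 : L_0 \to B$ is $\id_{L_0}$ on the $A_0$-strand (a curve in $T^*\RR$ equal to the zero section near both ends, giving a cobordism collared by $A_0$ at $\pm\infty$) disjoint-unioned with the basic cobordism $\emptyset \to A_1$ (a curve that is eventually conical in the negative cotangent direction of $T^*\RR$ at $-\infty$ and the zero section at $+\infty$); thus $i_0$ is collared by $A_0 \sqcup \emptyset = L_0$ at $-\infty$ and by $A_0 \sqcup A_1 = B$ at $+\infty$. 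Symmetrically one defines $i_1 : L_1 \to B$, and a projection $p_1 : B \to L_1$ consisting of $\id_{L_1}$ on the $A_1$-strand and the basic cobordism $A_0 \to \emptyset$ on the $A_0$-strand (and, dually, $p_0 : B \to L_0$). These curves only escape to $-\infty$ in the new cotangent coordinate, so all of $i_0, i_1, p_0, p_1$ are legitimate morphisms of $\lag(M)$, being instances of the basic ``create/destroy a component'' moves of~\cite{nadler-tanaka}.

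\textbf{Step 2: compositions and the cofiber sequence.} Composition in $\lag(M)$ is concatenation of cobordisms in the morphism factor. Concatenating $i_1$ with $p_1$ leaves the $A_1$-strand as $\id_{L_1}$ and turns the $A_0$-strand into a cobordism $\emptyset \to \emptyset$ (a ``bump'' curve times $A_0$); since $\emptyset$ is a zero object of $\lag(M)$, this strand is equivalent to $\id_\emptyset$, so $p_1 i_1 \simeq \id_{L_1}$ — where the only nontrivial input is Proposition~\ref{prop.linear-isotopy-equivalence} used to straighten the bump. (Similarly $p_1 i_0$ factors through the zero object $\emptyset$, hence is null, and $p_0 i_0 \simeq \id_{L_0}$, $p_0 i_1 \simeq 0$.) Next, apply the explicit mapping-cone model from the remark following Theorem~\ref{theorem.mapping-cone} to $i_0$: bending the $(-\infty)$-collar affects only the $A_0$-strand (the $A_1$-strand already runs off to $-\infty$), converting its curve into one that is the zero section at $+\infty$ and eventually conical in the negative cotangent direction, i.e. into a brane avoiding a neighborhood of the stabilized skeleton; the $A_1$-strand is unchanged. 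Hence $\cone(i_0) \simeq Z \sqcup A_1$ with $Z$ a $\Lambda$-avoiding brane. The collapse cobordism $Z \sqcup A_1 \to A_1$ (trivial on $A_1$, destroying $Z$) has $\Lambda$-avoiding mapping cone, which is a zero object by Proposition~\ref{prop.lambda-avoiding-zero-objects}, so Proposition~\ref{prop.cone-zero-equivalence} gives $Z \sqcup A_1 \xrightarrow{\ \sim\ } A_1 = L_1$; one checks this equivalence carries the canonical map $B \to \cone(i_0)$ to $p_1$. Thus $L_0 \xrightarrow{i_0} B \xrightarrow{p_1} L_1$ is a cofiber sequence in $\lag(M)$, split by $i_1$, and therefore $(B, i_0, i_1)$ is the coproduct $L_0 \sqcup L_1$.

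\textbf{Main obstacle.} The delicate step is the mapping-cone computation in Step 2: one must check that the collar-bending operation is compatible with the disjoint-union-of-strands decomposition (this is exactly where $q_0 \neq q_1$ is needed, so that bending one strand does not interfere with the other) and that the resulting $\Lambda$-avoiding strand genuinely satisfies all the conditions defining an object of $\lag(M)$, so that Propositions~\ref{prop.lambda-avoiding-zero-objects} and~\ref{prop.cone-zero-equivalence} legitimately apply; the ``absorb a zero object into a disjoint union'' sublemma ($Z \sqcup A_1 \simeq A_1$) must be proved geometrically rather than quoted from the existence of coproducts. A more laborious alternative would bypass the stable structure entirely and verify the mapping-space universal property of the coproduct directly, building the comparison map $B \to W$ from a pair $L_0 \to W$, $L_1 \to W$ by superposing the two cobordisms at distinct heights; I prefer the split-cofiber-sequence argument precisely because it reduces the required higher coherence to formal consequences of stability.
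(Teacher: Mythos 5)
The paper itself states this proposition without proof (it is deferred to~\cite{nadler-tanaka}), so I can only assess your argument on its own terms. Your overall strategy---exhibit $B = A_0 \sqcup A_1$ as the total object of a split cofiber sequence $L_0 \to B \to L_1$, then invoke the standard fact that in a stable $\infty$-category a split cofiber sequence is a biproduct---is a sound route, and the cast of structure cobordisms (strand-wise identities together with creation/annihilation moves) is the right geometric data to feed into it.

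However, there is a concrete geometric slip that would make the argument fail if executed literally. You describe the creation cobordism $\emptyset \to A_1$ (the $A_1$-strand of $i_0$) as escaping in the \emph{negative} cotangent direction of the morphism factor $T^*\RR$. But $1$-simplices of $\lag(M)$ must avoid the skeleton in the negative cotangent direction, and $A_1 = T^*_{q_1}\RR \times L_1$ does meet the stabilized skeleton (at $(q_1,0)$ times $L_1 \cap \sk(M)$); so such a curve is not a legal morphism. Compare with the $\Omega L$ example in Section~\ref{section.mapping-cone}: the creation cobordism $\emptyset \to L$ is modeled on $\Gamma \times L$ with $\Gamma$ in the first quadrant, escaping in the \emph{positive} cotangent direction. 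The same correction applies to the annihilation strand in $p_1$. Relatedly, the kernel model in the remark after Theorem~\ref{theorem.mapping-cone} bends \emph{both} horizontal collars downward, not just the $(-\infty)$-collar; bending only one end leaves a brane still collared at $q \to +\infty$, i.e.\ a morphism, not an object. With these two corrections, the picture changes: the bent $A_0$-strand escapes downward on both sides and can be pushed below the zero section (hence is $\Lambda$-avoiding, your $Z$), while the $A_1$-strand becomes a curve escaping upward on one side (the original $\Gamma$-type escape) and downward on the other (the bent collar), i.e.\ $\Omega A_1$, not $A_1$ itself as you wrote. The conclusion $\cone(i_0) \simeq L_1$ still holds after the implicit shift, so your argument is repairable, but as written the directions of escape are inconsistent with the definition of $\lag(M)$. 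One smaller point: the homotopy $p_1 i_1 \simeq \id_{L_1}$ needs a $2$-simplex witness; Proposition~\ref{prop.linear-isotopy-equivalence} is a statement about equivalences between \emph{objects}, so it does not directly supply such a homotopy of morphisms---one should instead note that a suitable family of Hamiltonian isotopies of the bump strand furnishes the required higher cobordism.
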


\section{Polterovich surgery}

\begin{remark}
The surgery of immersed Lagrangian submanifolds was first introduced by Polterovich in~\cite{polterovich-surgery} following the construction of surgered Lagrangians in dimension 2 by Lolande-Sikarov~\cite{}. In~\cite{polterovich-surgery}, Polterovich also mentions that Audin had studied Lagrangian surgery in the context of Lagrangian cobordism theory.
\end{remark}

\begin{remark}
For the reader's convenience, we carefully keep track of the structures one must use to decorate surgeries of branes (as opposed to surgeries of undecorated Lagrangians). The exposition is meant to be accessible to graduate students and to mathematicians unfamiliar with details of common symplectic arguments.
\end{remark}

\subsection{The construction}\label{section.surgery}
The familiar reader may wish to skip this section.

Let $M = (M,\omega)$ be a symplectic manifold of real dimension $2n$ and fix two Lagrangians $L_0, L_1 \subset M$. Assume that $p \in L_0 \cap L_1$ is a transverse intersection point. We review the construction of a (possibly immersed) Lagrangian $L_0 \sharp_p L_1$, which we call a {\em surgery} of $L_0$ with $L_1$ along $p$. (Indeed, there are at least two natural surgeries one can construct: See Remark~\ref{remark.surgery-reverse}.) The main definition is Definition~\ref{defn.lag-surgery}.

\begin{notation}
We let $\RR^n \subset \CC^n$ be the standard set of purely real vectors, and $i\RR^n \subset \CC^n$ the set of purely imaginary vectors. Likewise, we let $S^{n-1} \subset \RR^n$ denote the unit sphere, and $iS^{n-1} \subset i \RR^n$ denote the purely imaginary unit sphere.
\end{notation}

We first recall a basic fact:

\begin{prop}\label{prop.local-model}
In a neighborhood of $p$, we may model $L_0$ and $L_1$ as the standard copies of $\RR^n$ and $i\RR^n$ in $\CC^n$.
\end{prop}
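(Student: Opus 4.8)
The statement to prove is the standard Weinstein-type normal form for a pair of Lagrangians meeting transversally at a point: near $p$, one can find a symplectomorphism of a neighborhood of $p$ in $M$ onto a neighborhood of $0$ in $\CC^n$ (with its standard symplectic form) carrying $L_0$ to $\RR^n$ and $L_1$ to $i\RR^n$. Here is how I would organize the proof.

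\begin{proof}[Proof sketch]
The plan is to first reduce to the linear-algebraic statement at the tangent space $T_pM$, and then upgrade it to a neighborhood by a Moser/Weinstein argument. First I would recall that, since $p$ is a transverse intersection point, $T_pL_0$ and $T_pL_1$ are two Lagrangian subspaces of the symplectic vector space $(T_pM, \omega_p)$ with $T_pL_0 \cap T_pL_1 = 0$. A standard fact in symplectic linear algebra says that any such transverse pair of Lagrangians is equivalent, under a linear symplectomorphism, to the pair $(\RR^n, i\RR^n)$ inside $(\CC^n, \omega_{\mathrm{std}})$: one picks a basis $e_1,\dots,e_n$ of $T_pL_0$, uses the pairing $\omega_p$ to identify $T_pL_1$ with the dual $(T_pL_0)^*$ (transversality makes $\omega_p\colon T_pL_1 \to (T_pL_0)^*$ an isomorphism), takes the dual basis $f_1,\dots,f_n$ of $T_pL_1$, and checks that $(e_i,f_j)$ is a symplectic basis, i.e. $\omega_p(e_i,e_j)=\omega_p(f_i,f_j)=0$ and $\omega_p(e_i,f_j)=\delta_{ij}$; the Lagrangian condition on $T_pL_0,T_pL_1$ gives the first two families of equations and the dual-basis choice gives the third. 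Sending $e_i \mapsto$ (standard real basis vector) and $f_i \mapsto$ (standard imaginary basis vector) then linearly identifies the pair with $(\RR^n,i\RR^n)$.

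Next I would promote this to a local diffeomorphism. Choose any diffeomorphism $\phi_0$ from a neighborhood of $p$ to a neighborhood of $0 \in \CC^n$ whose differential at $p$ is the linear symplectic isomorphism just constructed and which carries $L_0$ to $\RR^n$ and $L_1$ to $i\RR^n$ near $0$ --- this is possible by working in coordinates adapted to the two (transverse, hence jointly spanning) submanifolds, e.g. writing points near $p$ uniquely via flowing along a framing of $TL_0$ and $TL_1$, or simply by taking a chart in which $L_0$ and $L_1$ are the two coordinate planes and then composing with a linear map to fix the first derivative. Now $\phi_0$ need not be a symplectomorphism, but $\omega_1 := (\phi_0^{-1})^*\omega$ and $\omega_0 := \omega_{\mathrm{std}}$ are two symplectic forms on a neighborhood of $0$ which agree at the origin, and with respect to which $\RR^n$ and $i\RR^n$ are both Lagrangian.

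Finally I would run the relative Moser trick. Set $\omega_t = (1-t)\omega_0 + t\omega_1$; shrinking the neighborhood, each $\omega_t$ is symplectic (nondegeneracy is open and holds at $0$ for all $t$ since $\omega_0=\omega_1$ there), and $\RR^n$, $i\RR^n$ are Lagrangian for every $\omega_t$ because Lagrangianity is affine in the form. Write $\omega_1 - \omega_0 = d\lambda$ with $\lambda$ a $1$-form vanishing at $0$; the key point is that one can choose the primitive $\lambda$ to vanish on $TL_0 = T\RR^n$ and on $TL_1 = Ti\RR^n$ along those submanifolds (using a Poincar\'e-lemma-with-parameters / relative version, since both $\omega_i$ restrict to zero on each plane). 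Define the time-dependent vector field $v_t$ by $\iota_{v_t}\omega_t = -\lambda$; then $v_t$ vanishes at $0$ and is tangent to $\RR^n$ and to $i\RR^n$ (because $\lambda$ annihilates their tangent spaces and $\omega_t$ restricted to them is nondegenerate transversally), so its flow $\psi_t$ exists for $t\in[0,1]$ on a possibly smaller neighborhood, fixes $0$, and preserves both $\RR^n$ and $i\RR^n$. The usual computation $\frac{d}{dt}\psi_t^*\omega_t = \psi_t^*(\cL_{v_t}\omega_t + \dot\omega_t) = \psi_t^*(d\iota_{v_t}\omega_t + \omega_1-\omega_0) = \psi_t^*(-d\lambda + d\lambda)=0$ shows $\psi_1^*\omega_1 = \omega_0$. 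Then $\phi := \psi_1^{-1}\circ \phi_0$, restricted to a sufficiently small neighborhood of $p$, is a symplectomorphism onto a neighborhood of $0$ in $(\CC^n,\omega_{\mathrm{std}})$ carrying $L_0$ to $\RR^n$ and $L_1$ to $i\RR^n$.

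The step I expect to be the main obstacle --- or at least the one requiring the most care --- is arranging the primitive $\lambda$ so that it simultaneously vanishes along $\RR^n$ and along $i\RR^n$, so that the Moser flow preserves \emph{both} Lagrangians at once; this is where the transversality of $L_0$ and $L_1$ is genuinely used (it is what lets one split a tubular neighborhood and build such a $\lambda$ by a parametrized Poincar\'e lemma along the two planes). Everything else is the standard Weinstein neighborhood machinery.
\end{proof}
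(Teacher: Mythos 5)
Your argument is correct, but it takes a genuinely different route from the paper's. The paper applies a Darboux chart first, so that the symplectic form is standard, and then straightens the Lagrangians one at a time by Hamiltonian isotopies: first deforming $\phi(L_0)$ (viewed as the graph of a closed $1$-form near the origin) to $\RR^n$, and then deforming $\phi(L_1)$ to $i\RR^n$ via an isotopy that preserves $\RR^n$. You do the dual thing: you first choose coordinates making $L_0$ and $L_1$ linear (with the correct $1$-jet from the linear-algebra normal form), and then correct the symplectic form by a Moser/Weinstein flow tangent to both planes. Both strategies are standard and yield the same normal form.

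One small point of clarification on the step you flagged as the main obstacle. Once $L_0$ and $L_1$ have been straightened to the linear subspaces $\RR^n$ and $i\RR^n$, you do not need a bespoke relative Poincar\'e lemma: the ordinary primitive $\lambda = I(\omega_1-\omega_0)$ built from the radial retraction $h_t(x)=tx$ already annihilates $T\RR^n$ along $\RR^n$ and $Ti\RR^n$ along $i\RR^n$. Indeed, for $x,v\in\RR^n$ one computes $\lambda_x(v)=\int_0^1(\omega_1-\omega_0)_{tx}(tx,tv)\,dt$, and this vanishes because $tx,tv\in T_{tx}\RR^n$ and $\RR^n$ is Lagrangian for both forms; the same argument applies to $i\RR^n$. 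So the simultaneous vanishing comes for free from the fact that the Euler vector field is tangent to each linear plane --- transversality has already been spent in getting the planes linear and complementary, exactly as you say at the start. The paper's route avoids this remark entirely by never leaving the world of the standard symplectic form, at the cost of needing to exhibit a Hamiltonian isotopy of $L_1$ that keeps $\RR^n$ fixed; in that approach transversality is used to realize $L_1$ as a graph over $i\RR^n$ in a coordinate system in which preserving $\RR^n$ is easy to arrange.
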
 

\begin{proof}
For a small neighborhood $U \subset M$ containing $p$, choose a Darboux chart $\phi: U \to \CC^n$. The local model of the proposition is achieved by noting the following:
\enum
\item There exists a Hamiltonian isotopy deforming $\phi(L_0)$ to the standard copy of $\RR^n = span(x_1,\ldots,x_n)$ in some neighborhood of $\phi(p)$. (For example, by identifying $T_{\phi(p)}\phi(L_0)$ with a linear subspace of $\CC^n$, we observe that $\phi(L_0)$ is locally the graph of a closed 1-form defined on $T_{\phi(p)}\phi(L_0)$. One can choose a Hamiltonian isotopy from the graph of this closed 1-form to the graph of the trivial 1-form.)
\item Likewise, there exists a Hamiltonian isotopy that preserves $\RR^n$ and deforms $\phi(L_1)$ to the standard copy of $i\RR^n = span(y_1,\ldots,y_n)$ in a neighborhood of $p$. (Here we have used that $p$ is a transverse intersection point of $L_0$ and $L_1$.) 
\enumd
By composing $\phi$ with these Hamiltonian isotopies (and passing to a smaller neighborhood inside $U$ as necessary), the proposition is proven.
\end{proof}

\begin{remark}[The local picture preserves primitives at $p$]
Both isotopies can be chosen to fix $\phi(p)$; in particular, if each $L_i$ is equipped with a primitive $f_i: L_i \to \RR$ such that $df_i = \theta|_{L_i}$, these deformations do not change the value of $f_i(p)$. 
\end{remark}

\begin{notation}[$\Gamma$]\label{notation.Gamma}
In what follows, we fix a smooth curve $\Gamma \subset \CC$ with the following properties:
\enum
\item $\Gamma$ is contained in the first quadrant, so $(x,y) \in \Gamma \implies x \geq 0 \land y \geq 0$, 
\item There exists a real number $A > 0$ such that outside the open box 
	\eqnn
	\AA = (0,A) \times i(0,A) \subset \CC,
	\eqnd 
$\Gamma$ is equal to a union of the two rays
	\eqnn
	[A, \infty)  \coprod i[A, \infty) \subset \RR \cup i\RR \subset \CC,
	\eqnd
and
\item The two projection maps $\Gamma \cap \AA \to \RR$ and $\Gamma \cap \AA \to i\RR$ are both diffeomorphisms onto their image.
\enumd 
See Figure~\ref{figure.Gamma-curve}.
\end{notation}

\begin{figure}[h]
		\[
			\xy
			\xyimport(8,8)(0,0){\includegraphics[width=2in]{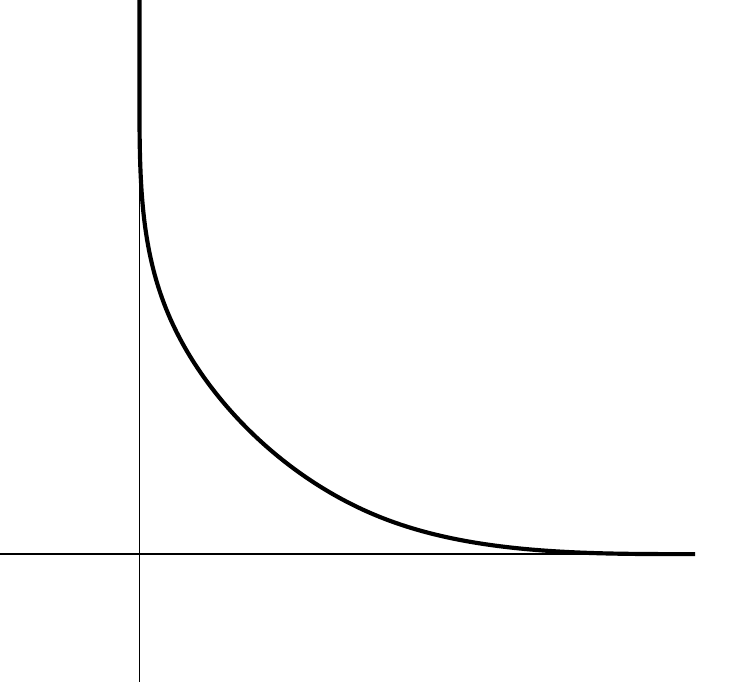}}
			,(5,4)*+{\Gamma}
			\endxy
		\]
\caption{
A choice of $\Gamma \subset \CC \cong T^*\RR$.
}
\label{figure.Gamma-curve}
\end{figure}

\begin{notation}
Let $S^{n-1} \subset \RR^{n} \subset \CC^{n}$ be the unit sphere in $\RR^n$. We let $\Gamma\cdot S^{n-1} \subset \CC^{n}$ denote the set
	\eqnn
	\Gamma\cdot S^{n-1}
	:= 
	\{
	(z x_1,\ldots,z x_n)
	\, | \,
	z \in \Gamma 
	\land
	(x_1,\ldots,x_n) \in S^{n-1}
	\}.
	\eqnd
\end{notation}

\begin{prop}\label{prop.local-surgery}
$\Gamma\cdot S^{n-1} \subset \CC^{n}$ is a Lagrangian submanifold diffeomorphic to $\RR \times S^{n-1}$. Outside the rectangular solid $[-A, A]^{2n} \subset \RR^{2n} \cong \CC^n$, the set $\Gamma\cdot S^{n-1}$ is equal to $\RR^n \cup i \RR^n \setminus [-A, A]^{2n}$.
\end{prop}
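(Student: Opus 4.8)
The plan is to verify the two assertions separately: first that $\Gamma \cdot S^{n-1}$ is a smooth embedded Lagrangian submanifold diffeomorphic to $\RR \times S^{n-1}$, and then that it agrees with $\RR^n \cup i\RR^n$ outside a large box.

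First I would set up a parametrization. Consider the map $\Phi: \RR \times S^{n-1} \to \CC^n$ sending $(t, \vec x) \mapsto (\gamma(t) x_1, \ldots, \gamma(t) x_n)$, where $t \mapsto \gamma(t)$ is an arc-length-type parametrization of $\Gamma$. Since $\Gamma$ lies in the first quadrant and (by property (3) of Notation~\ref{notation.Gamma}) projects diffeomorphically to each axis inside $\AA$, the value $\gamma(t)$ is never zero, so $\Phi$ is an immersion: its differential in the $t$-direction is $\gamma'(t)\vec x \neq 0$, and in the $S^{n-1}$-directions it is $\gamma(t) \cdot (\text{tangent to }S^{n-1})$, and these are linearly independent over $\RR$ in $\CC^n$ because $\gamma(t) \neq 0$ and $\vec x \perp T_{\vec x}S^{n-1}$ in $\RR^n$ while multiplication by $\gamma(t)$ is $\CC$-linear. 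For injectivity, note that $(\gamma(t)\vec x) = (\gamma(s)\vec y)$ forces $|\gamma(t)| = |\gamma(s)|$ (comparing norms, using $|\vec x| = |\vec y| = 1$), and since $\Gamma$ is an embedded curve on which $|z|$ is monot/injective along each of the two rays and controlled inside $\AA$—more carefully, one uses that the two projections are diffeomorphisms onto their images to see $\gamma$ is injective—hence $\gamma(t) = \gamma(s)$, $t = s$, and then $\vec x = \vec y$. Properness is immediate since $|\gamma(t)| \to \infty$ as $t \to \pm\infty$. So $\Phi$ is a smooth embedding with image $\Gamma \cdot S^{n-1}$, giving the claimed diffeomorphism type.

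For the Lagrangian condition I would compute $\Phi^*\omega_{\mathrm{std}}$ where $\omega_{\mathrm{std}} = \sum dx_j \wedge dy_j = \operatorname{Im} \sum d\bar z_j \wedge dz_j / (\text{const})$; writing $z_j = \gamma(t) x_j$ with $x_j$ real and $\gamma = u + iv$, one gets $\sum_j \operatorname{Im}(d\bar z_j \wedge dz_j) = \sum_j \operatorname{Im}(\overline{(x_j d\gamma + \gamma dx_j)} \wedge (x_j d\gamma + \gamma dx_j))$. The $d\gamma \wedge d\gamma$ terms vanish, the $\gamma dx_j \wedge \gamma dx_j$ terms vanish since $|\gamma|^2$ is real and $dx_j \wedge dx_j = 0$, and the cross terms combine to a multiple of $d(|\gamma|^2) \wedge d(\sum x_j^2) = d(|\gamma|^2)\wedge d(1) = 0$ on $S^{n-1}$. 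Hence $\Phi^*\omega = 0$, so $\Gamma \cdot S^{n-1}$ is Lagrangian of the correct dimension $n$. (Alternatively, one can cite this as the standard computation behind Polterovich's construction.)

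Finally, outside $[-A,A]^{2n}$: if $(\gamma(t)\vec x)$ has some coordinate of modulus $> A$, then since $|\gamma(t) x_j| \leq |\gamma(t)|$ we must have $|\gamma(t)| > A$, so by property (2) of Notation~\ref{notation.Gamma}, $\gamma(t)$ lies on one of the two rays $[A,\infty) \subset \RR$ or $i[A,\infty) \subset i\RR$. In the first case $\gamma(t)\vec x \in \RR^n$ (a real multiple of a unit vector, with the multiple $\geq A$, hence a point of $\RR^n$ of norm $\geq A$), in the second case $\gamma(t)\vec x \in i\RR^n$ similarly; conversely any point of $(\RR^n \cup i\RR^n) \setminus [-A,A]^{2n}$ is $r\vec x$ or $ir\vec x$ with $r > A$, $\vec x \in S^{n-1}$, hence of the form $\gamma(t)\vec x$ for the appropriate $t$. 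This gives the stated equality of sets.

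The main obstacle is the Lagrangian computation: one must be careful that the wedge-product bookkeeping for $\Phi^*\omega$ genuinely collapses using $|\vec x|^2 \equiv 1$ on $S^{n-1}$, and that $\Gamma$ being a curve (so $d\gamma \wedge d\gamma = 0$) is what kills the remaining term—this is exactly where properties (1) and (3) of $\Gamma$, guaranteeing $\gamma \neq 0$ and smoothness, are used. Injectivity of $\Phi$ also deserves a careful argument precisely at the ``corner'' region $\AA$ where $\Gamma$ transitions between the two rays; property (3) (both projections are diffeomorphisms onto their images) is what rules out $\Gamma$ doubling back and is the crux of that step.
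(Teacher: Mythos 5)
Your overall approach — parametrize by $(t,\vec x)\mapsto \gamma(t)\vec x$, check it is an embedding, and compute the pullback of $\omega$ — is essentially the same as the paper's. The only real difference is that the paper first observes that $S^{n-1}\subset\RR^n$ is isotropic and that complex scalar multiplication preserves isotropy, so the pullback of $\omega$ to a pair of $S^{n-1}$-tangent directions vanishes automatically; it then only has to check $j^*\omega(\partial_t,u)=0$ by a one-line coordinate computation yielding $(\dot\gamma_1\gamma_2-\gamma_1\dot\gamma_2)\sum x_i u_i=0$. Your full pullback computation reaches the same conclusion. You also supply useful detail — injectivity/properness of the parametrization and the ``outside the box'' claim — that the paper leaves as ``easy to see'' or unstated.

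Two small slips in your Lagrangian computation deserve correction. First, the vanishing of $d\gamma\wedge d\gamma$ has nothing to do with properties (1) and (3) of $\Gamma$: it holds simply because $\gamma$ depends on the single variable $t$, so $d\gamma$ is a multiple of $dt$ and any wedge of two such forms is zero. Second, the cross terms do not combine to a multiple of $d(|\gamma|^2)\wedge d\bigl(\sum x_j^2\bigr)$; carrying out the expansion, the $t$-dependent factor that appears is $\gamma_2\,d\gamma_1-\gamma_1\,d\gamma_2$, which is proportional to $dt$ but is not an exact $1$-form (and in particular is not $d(|\gamma|^2)=2(\gamma_1 d\gamma_1+\gamma_2 d\gamma_2)$). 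The conclusion is unaffected, since the relevant point is only that the cross terms are of the form $(\text{$1$-form in }t)\wedge \sum x_j dx_j$ and $\sum x_j dx_j=\tfrac12\,d\bigl(\sum x_j^2\bigr)$ restricts to zero on $S^{n-1}$, but the specific identification of the $1$-form is worth getting right.
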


\begin{proof}
Choosing a parametrization $\gamma: \RR \to \Gamma$, we have an induced map $\RR \times S^{n-1}\to \CC^{n}$. It is easy to see that this is a diffeomorphism onto its image, so that $\Gamma\cdot S^{n-1}$ is a submanifold of $\CC^{n}$ diffeomorphic to $\RR \times S^{n-1}$. 

Let us write $\gamma(t) = \gamma_1(t) + i \gamma_2(t) \in \CC$. We must show that the embedding
	\eqnn
	j: \RR \times S^{n-1} \to \CC^{n},
	\qquad
	(t,x) \mapsto \gamma(t)\cdot x
	\eqnd
is a Lagrangian embedding with respect to the standard symplectic structure $\sum dx_i dy_i$. Note that $S^{n-1} \subset \RR^n$ is isotropic and complex multiplication preserves isotropics; so it suffices to check that for any $x = (x_1, \ldots, x_n) \in S^{n-1}$ and for any $u = \sum_{i=1}^n u_i {\frac {\del}{\del x^i}} \in T_x S^{n-1}$, we have that $j^*\omega({\frac {\del}{\del t}}, u) = 0$. This is a straightforward computation:
	\begin{align}
	j^*\omega({\frac {\del}{\del t}}, u)
	&=
	\omega (Dj ({\frac {\del}{\del t}}), Dj(u)) \nonumber \\
	&=
	\omega ({\frac {d \gamma_1}{dt}} \sum_{i=1}^n x_i {\frac {\del}{\del x^i}} + {\frac {d \gamma_2}{dt}} \sum_{i=1}^n x_i{\frac{\del}{\del y^i}}, \gamma_1 u + \gamma_2 J(u)) \nonumber \\
	&= ({\frac {d \gamma_1}{dt}} \gamma_2 - \gamma_1 {\frac {d \gamma_2}{dt}}) \sum_{i=1}^{n} x_i u_i \nonumber \\
	&= 0. \nonumber
	\end{align}
(We have used the fact that any tangent vector $u$ to $x$ is orthogonal to $x$.)
\end{proof}

By Proposition~\ref{prop.local-model}, we know that $L_0$ and $L_1$ inside $M$ can be modeled near $p$ as the standard linear subspaces $\RR^n, i \RR^n$ inside $\CC^n$. Letting $U \subset M$ be such a small neighborhood about $p$ and letting $\phi: U \to \CC^n$ be a local model so that $\phi(L_0 \cap U) = \RR^n \cap \phi(U)$ and $\phi(L_1 \cap U) = i \RR^n \cap \phi(U)$, consider the submanifold
	\eqnn
	L_0 \sharp_p L_1 
	:= 
	 \phi^{-1}\left(
		\Gamma \cdot S^{n-1}
		\right)
	\bigcup
	\left(
	L_0 \cup L_1 \setminus U 
	\right)
	\eqnd
where $A$ is chosen small enough so that $[-A, A]^{2n} \subset \phi(U)$. 

\begin{defn}\label{defn.lag-surgery}
We call $L_0 \sharp_p L_1$ a {\em Lagrangian surgery}, or a {\em Polterovich surgery} of $L_0$ and $L_1$ at $p$. (See Figure~\ref{figure.surgery-summary}.)
\end{defn}

\begin{example}
The Lagrangian surgery of two points is empty. 
\end{example}

\begin{figure}
		\[
			\xy
			\xyimport(8,8)(0,0){\includegraphics[width=4in]{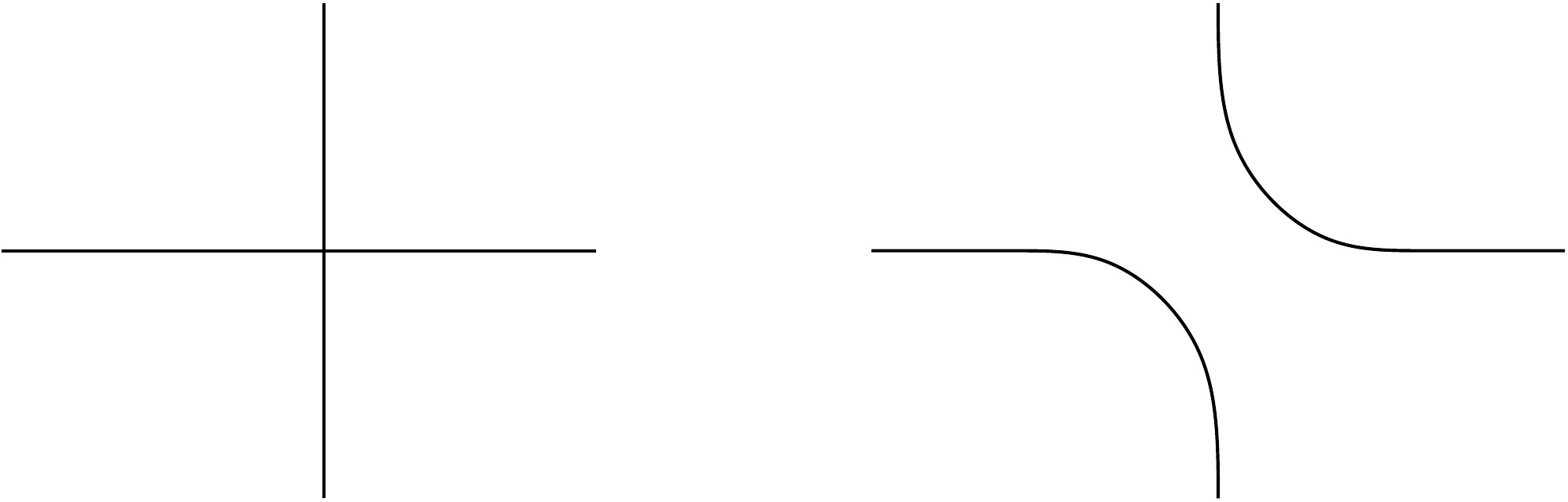}}
			,(1.5,8)*+{L_1}
			,(2.8,3.3)*+{L_0}
			,(7,6.5)*+{L_0 \#_p L_1}
			\endxy
		\]
\caption{A local picture of Lagrangian surgery in the case $n=1$. Note that in the picture $L_0\sharp_p L_1$ is disconnected, but in general (for $n \geq 2$), the surgery is connected.}
\label{figure.surgery-summary}
\end{figure}

\begin{remark}\label{remark.surgery-reverse}
There is another Lagrangian surgery one could obtain---take the complex conjugate $\overline{\Gamma}$, and locally replace $L_0 \cup L_1$ by $\overline{\Gamma} \cdot S^{n-1}$. 
\end{remark}

\begin{remark}
If $L_0 \cap L_1$ consists of exactly one point $p$, then $L_0 \sharp_p L_1$ is an embedded Lagrangian.
\end{remark}

\subsection{Brane structures on Lagrangian surgeries}

\begin{prop}\label{prop.brane-surgery}
Let $L_0, L_1\subset M$ be Lagrangians equipped with a grading and a $Pin$-structure. Assume that $L_0$ and $L_1$ intersect transversally at exactly one point $p$. Then the Lagrangian $L_0 \sharp_p L_1$ admits a grading and a $Pin$-structure, and this can be chosen such that for some element $\sigma \in H^0(L_1;\ZZ) \oplus H^1(L_1;\ZZ/2\ZZ)$, the grading and $Pin$ structures on $L_0 \sharp_p L_1$ agrees with that of $L_0 \cap L_0 \sharp_p L_1$ and of $L_1^\sigma \cap L_0 \sharp_p L_1$. (See Notation~\ref{notation.L-sigma}.)
\end{prop}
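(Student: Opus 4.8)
The plan is to assemble the brane structure on $L_0 \sharp_p L_1$ out of three pieces and glue. By construction $L_0 \sharp_p L_1 = \phi^{-1}(\Gamma\cdot S^{n-1}) \cup (L_0 \cup L_1 \setminus U)$, and by Proposition~\ref{prop.local-surgery} the handle $\phi^{-1}(\Gamma\cdot S^{n-1})$ is diffeomorphic to $\RR \times S^{n-1}$ and agrees with $L_0\cup L_1$ outside the box $[-A,A]^{2n}$. Choosing $A$ small, I get an open cover $L_0\sharp_p L_1 = V_0 \cup V_1 \cup V_R$, where $V_0$ is a neighborhood of $L_0$ with a small ball at $p$ removed, $V_1$ a neighborhood of $L_1$ with a small ball at $p$ removed, and $V_R$ a neighborhood of the handle, with $V_0 \cap V_1 = \emptyset$ and each of $V_0 \cap V_R$, $V_1 \cap V_R$ deformation retracting onto a copy of the ``surgery sphere'' $S^{n-1}$. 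It is convenient to first fix a global $\omega$-compatible almost complex structure $J$ on $M$ which near $p$ equals the push-forward under $\phi$ of the standard complex structure on $\CC^n$; all phase-squared maps below are taken with respect to $J$, so that near $p$ they are the explicit ones coming from linear algebra, and the given gradings and $Pin$-structures on $L_0$ and $L_1$ are transported to this $J$ --- harmless, since a brane structure depends on the choice of $J$ only up to contractible choice.

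For the grading I keep $\alpha_{L_0}$ on $V_0$ and $\alpha_{L_1}$ on $V_1$ and build a grading on $V_R$. Using the parametrization $j(t,x) = \gamma(t)\cdot x$ of Proposition~\ref{prop.local-surgery}, the tangent Lagrangian plane at $\gamma(t)\cdot x$ is $A(t)\cdot\RR^n$ for the unitary $A(t)$ that multiplies the line $\RR x$ by $\gamma'(t)/|\gamma'(t)|$ and multiplies $T_xS^{n-1} = (\RR x)^\perp$ by $\gamma(t)/|\gamma(t)|$; hence its phase-squared is $(\det A(t))^2 = (\gamma'/|\gamma'|)^2\,(\gamma/|\gamma|)^{2(n-1)}$, which is independent of $x \in S^{n-1}$. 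Therefore the phase-squared map on $V_R$ factors up to homotopy through the projection to the line factor $\RR$, so it is null-homotopic and lifts to a grading $\tilde\alpha$; I normalize the lift (on each component of the handle) so that $\tilde\alpha = \alpha_{L_0}$ on $V_0 \cap V_R$, which is possible because there the phase-squared is the constant $1$. On $V_1 \cap V_R$ the two lifts $\tilde\alpha$ and $\alpha_{L_1}$ of the same $S^1$-valued map differ by a locally constant, $\ZZ$-valued function; since this overlap is connected and lies in the component of $L_1$ through $p$, that function extends to an element $\sigma_0 \in H^0(L_1;\ZZ)$, and a short computation --- tracking the winding of $(\det A(t))^2$ from the $L_0$-end ($t = -\infty$, value $1$) to the $L_1$-end ($t = +\infty$, value $(-1)^n$) and using that $L_0$, $L_1$ are flat near $p$ (so $\alpha_{L_0}$, $\alpha_{L_1}$ are locally constant there) --- identifies $\sigma_0$ near $p$ with the explicit integer $\alpha_{L_0}(p) + (n-2)/2 - \alpha_{L_1}(p)$ (an integer precisely because the phase-squared of $\RR^n$ is $1$ and that of $i\RR^n$ is $(-1)^n$). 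By construction $\tilde\alpha$, $\alpha_{L_0}$ and $\alpha_{L_1} + \sigma_0 = \alpha_{L_1^\sigma}$ agree on all overlaps, so they glue to a grading on $L_0\sharp_p L_1$ restricting to the gradings of $L_0$ and of $L_1^\sigma$.

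For the $Pin$-structure I take the $H^1(L_1;\ZZ/2)$-component of $\sigma$ to be $0$ and argue analogously: I keep the given $Pin$-structures on $V_0$ and $V_1$. The handle has tangent bundle $\underline{\RR}\oplus TS^{n-1}$, so $w_2$ vanishes on $V_R$ and $V_R$ admits $Pin$-structures; and since the surgery sphere $S^{n-1}$ bounds the small ball removed from $L_0$ (and likewise from $L_1$), the $Pin$-structures of $L_0$ and of $L_1$ both restrict to the distinguished ``bounding'' $Pin$-structure on $S^{n-1}$ --- the one that extends over a ball. As $V_R$ deformation retracts onto $S^{n-1}$, I may choose a $Pin$-structure on $V_R$ restricting to that bounding structure on both ends; it then agrees with the $Pin$-structures of $L_0$ and $L_1$ on the respective overlaps, and since there are no triple overlaps, the three glue. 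Taking $\sigma = (\sigma_0, 0)$ finishes the argument.

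I expect the main difficulty to be the bookkeeping in the low-dimensional cases $n = 1, 2$, where the handle $\RR\times S^{n-1}$ and the surgery sphere $S^{n-1}$ have nonvanishing first cohomology. For these one must check that the phase-squared on the handle is genuinely independent of the $S^{n-1}$-direction (so a grading lift exists even though $H^1(\RR\times S^1;\ZZ)=\ZZ$ when $n=2$), and, when $n=1$, that $\alpha_{L_0}$ and $\alpha_{L_1}$ are constant in the chart near $p$ --- so that, although $V_1\cap V_R$ is then disconnected, the two local values of $\tilde\alpha - \alpha_{L_1}$ agree and there is no winding obstruction to a global grading on $L_0\sharp_p L_1$. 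The analogue for the $Pin$-structure is exactly the observation that the surgery sphere is null-homotopic inside each $L_i$, which forces the two restricted $Pin$-structures to coincide. Everything else is routine Mayer--Vietoris-style gluing.
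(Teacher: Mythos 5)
Your proof is correct and reaches the same conclusion, but by a genuinely different route than the paper's. The paper works entirely through the classifying-space formulation of Remark~\ref{remark.classifying-grassmannian}: brane structures are lifts of the Gauss map $L \to \GrLag_M$ to the fibration $\widetilde{\GrLag_M} \to \GrLag_M$, so gradings and $Pin$-structures are handled \emph{simultaneously} by one obstruction-theoretic argument --- first showing the Gauss map of the local handle factors (up to homotopy) through $\RR^n$ and is null-homotopic, then showing the induced brane structure on $iS^{n-1}$ extends over $i\RR^n$ via a rotation homotopy, and finally solving a lifting problem over $L_1$ using the fact that the lift is already prescribed on the contractible set $iD^n$. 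Your Mayer--Vietoris argument treats the two pieces of brane data separately: the grading is handled by an explicit computation of the phase-squared on the handle (showing it is constant in the sphere direction, hence liftable), and the $Pin$-structure by the observation that the surgery sphere is nullhomotopic in each $L_i$ so both restrictions are the bounding $Pin$-structure. What your approach buys is an explicit formula for the grading shift and the sharper conclusion that the $H^1(L_1;\ZZ/2)$-component of $\sigma$ may be taken to vanish --- a refinement the paper does not state, but which I believe is correct. What it costs is exactly what you flag: the low-dimensional cases $n=1,2$ need separate bookkeeping (disconnected overlaps, nontrivial $H^1$ of the handle), whereas the paper's lifting argument treats all $n$ uniformly because all the obstruction theory happens over the contractible $iD^n$. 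One small caution: your explicit value $\sigma_0 = \alpha_{L_0}(p) + (n-2)/2 - \alpha_{L_1}(p)$ is the negative of what the paper's example yields; this is a convention difference traceable to your choice of $J$ versus the paper's Warning~\ref{warning.complex-structure-conjugate} (the paper uses the conjugate complex structure on $T^*\RR$), and does not affect the validity of the argument, but would matter if the formula were used elsewhere.
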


\begin{proof}
Let us first show that the local model $\Gamma\cdot S^{n-1} \subset \CC^n$ admits a brane structure. By Remark~\ref{remark.classifying-grassmannian}, it suffices to show that the composite
	\eqnn
	S^{n-1} \simeq S^{n-1} \times \RR \cong \Gamma\cdot S^{n-1} \to \GrLag
	\eqnd
is null-homotopic. This is obvious because the map $S^{n-1} \to \GrLag$ factors through $\RR^n \subset \CC^n$ and the inclusion $S^{n-1} \to \Gamma \cdot S^{n-1}$ is a homotopy equivalence.

Further, the inclusions $S^{n-1} \to \Gamma \cdot S^{n-1}$ and $iS^{n-1} \to \Gamma \cdot S^{n-1}$ realize a homotopy between the maps classifying the brane structure on $S^{n-1}$ and on $i S^{n-1}$. This homotopy is realized by rotating $\RR^n$ into $i \RR^n$ by multiplying the elements of $\RR^n$ along a path of complex numbers starting at 1 and ending at $i$. We conclude that the brane structure on $i S^{n-1}$ extends to one on $i \RR^n$. Let $iD^n \subset i\RR^n$ be the usual disk; we then have a commutative diagram
	\eqnn
	\xymatrix{
	iS^{n-1} \ar[d] \ar[dr] \\
	i D^n \ar[d] \ar[r] & \widetilde{\GrLag_M} \ar[d]\\
	L_1 \ar@{-->}[ur]^{\sigma} \ar[r] & \GrLag_M
	}
	\eqnd
and we must determine whether a lift $\sigma$ exists as indicated. By assumption that $L_1$ admits a brane structure, we know that there does exist a lift of the map $L_1 \to \GrLag_M$ to $\widetilde{\GrLag_M}$; the only question is whether one can choose a lift compatible with the map from $iD^n$. This is clear, as there is no obstruction to extending a given brane structure on a point $\RR^0 \simeq D^n \subset L_1$ to all of $L_1$ once one knows that $L_1$ admits a brane structure.

Thus by shifting the brane structure on $L_1$ by some appropriate group element $\sigma$ to obtain a new brane $L_1^\sigma$, one can ensure that the brane structure of $L_1^\sigma$ restricts to the brane structure on $i D^n$. This completes the proof.
\end{proof}

\begin{remark}[Brane structures on surgeries, in general]\label{remark.brane-structures}
While gradings and Pin structures are utilized to construct $\ZZ$-graded, $\ZZ$-linear Floer theories, a Floer theory that is linear over ring {\em spectra} will require other topological decorations of our Lagrangians. (See, for example, work of Jin-Treumann~\cite{jin-treumann} which suggests the structures needed to produce a microlocal invariant linear over the sphere spectrum.) 

We anticipate that the admissibility of a brane structure can be measured through purely topological invariants of a Lagrangian and its ambient symplectic manifold. Let us make the following hypothesis about what a brane structure is:

{\bf Hypothesis:} To every ring spectrum $R$, there exists a bundle $B_R \to \GrLag_M \to M$ with fiber $Pic(R)$; a brane structure on $L$ with respect to $R$ is a choice of lift of the map $\tau_L: L \to \GrLag_M$ to $B_R$. 

Moreover, let us assume that $B_R \to \GrLag_M$ has the following lifting property: If $L$ admits a lift of $\tau_L$, then for any $x \in L$ and for any point $\tilde x$ in the fiber above $\tau_L(x)$,  there is a lift $L \to B_R$ such that $x$ is sent to $\tilde x$. (This ensures that given any map $D^n \to L$ and a lift of $D^n \to L \to \GrLag_M$ to $B_R$, one can find a compatible lift of $\tau_L$.)

Then the proof of Proposition~\ref{prop.brane-surgery} carries over to show that if $L_0$ and $L_1$ are branes intersecting transversally at exactly one point, their surgery $L_0 \sharp L_1$ supports a brane structure compatible with $L_1^\sigma$ for some $\sigma$.
\end{remark}

\begin{caution}\label{warning.complex-structure-conjugate}
Finally, our conventions in this paper are to endow $\CC \cong T^*\RR$ with the symplectic structure $dpdq$ of $T^*\RR$, and the corresponding compatible almost-complex structure. Thus, under the standard identification $x \mapsto q, y \mapsto p$, we conclude that $J_{T^*\RR}$ is the {\em complex conjugate} of the usual complex structure on $\CC$. (This is a famous incompatibility between $T^*\RR$ and $\CC$.) 
\end{caution}

\begin{example}\label{example.surgery-gauss-map}
Let us describe the map from $\Gamma \cdot S^{n-1} \to \GrLag$ more explicitly. Choose a parametrization $\gamma: \RR \to \Gamma$, and let $v_1,\ldots, v_{n-1} \in T_x S^{n-1}$ be a basis for the tangent space at $x \in S^{n-1}$. Via the standard embedding $T S^{n-1} \into T \RR^n \subset T\CC^n|_{\RR^n} \cong \RR^n \times \CC^n$, one may identify $v_1,\ldots,v_{n-1}$ as elements of $\RR^n \subset \CC^n$. Then the collection
	\eqnn
	{\frac {d \gamma}{dt}} , \gamma(t) v_1,\ldots, \gamma(t) v_{n-1}
	\in \CC^n
	\eqnd
forms a basis for the tangent space of $\Gamma \cdot S^{n-1}$ at a point $\gamma(t) \cdot x$. The composition
	\eqnn
	\Gamma \cdot S^{n-1} \to GL_n(\CC) \to GL_n(\overline{\CC}) \simeq U_n \to U_n/O_n \simeq \GrLag_n \to \GrLag
	\eqnd
is the Gauss map in Remark~\ref{remark.classifying-grassmannian}. 
\end{example}

\begin{example}
Using Example~\ref{example.surgery-gauss-map}, let us describe the grading on the surgery. First, note that $\RR^n \subset \CC^n$ has constant squared-phase given by $1 \in S^1$. Let us grade the Lagrangian $\RR^n$ and choose a lift to $\RR$, say by declaring $\alpha = 0$. Then a simple computation shows that the path $(\gamma(t),0,0,\ldots,0)$ from $(1,0,\ldots,0) \in S^{n-1} \subset \RR^n$ to $(i,0,\ldots,0) \in i S^n \subset i \RR^n$ has squared-phase
	\eqnn
	\overline{
	\left({\frac
	{\dot \gamma(t) (\gamma(t))^n}
	{|\dot \gamma(t) (\gamma(t))^n|}
	}
	\right)^2
	}
	\in S^1
	\eqnd
at time $t$.

 Now extend the grading $\alpha = 0$ on $S^{n-1} \subset \RR^n$ to the grading on $\Gamma \cdot S^{n-1}$. Then, to the point $(i,0,\ldots,0) \in iS^{n-1} \subset \RR^n \sharp i \RR^n$, the extended grading assigns the real number
	\eqnn
	{\frac 1 2} - {\frac {n-1} {2}}.
	\eqnd
(Of course, the squared-phase map on $i\RR^n \subset \CC^n$ is constant, so this is the value of the grading on all of $i \RR^n \cap \RR^n \sharp i \RR^n$.) Note that we are making use of Warning~\ref{warning.complex-structure-conjugate}.
\end{example}

Now we tackle exactness.

\begin{prop}\label{prop.surgery-primitive}
Let $L_0,L_1 \subset M$ be Lagrangians equipped with primitives $f_i: L_i \to \RR$. Let $p \in L_0 \cap L_1$ be a transverse intersection point. Then there exists a constant $C$ (depending only on the behavior of $L_0$ and $L_1$ near $p$) such that if $|f_0(p) - f_1(p)| < C$, then there exists a surgery $L_0 \sharp_p L_1 \subset M$ which is an exact Lagrangian, whose primitive can be chosen to agree with $f_i$ along $\left(L_0 \sharp_p L_1\right) \bigcap L_i$.
\end{prop}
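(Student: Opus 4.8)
The plan is to exhibit an explicit primitive on $L_0 \sharp_p L_1$ by patching the primitive on the local surgery piece $\Gamma \cdot S^{n-1}$ with the primitives $f_0, f_1$ away from the surgery region. First I would compute a primitive for the standard local model. Working in the Darboux chart $\phi: U \to \CC^n$ with $\theta = \sum_i x_i \, dy_i$ (or the convention fixed in Warning~\ref{warning.complex-structure-conjugate}; the argument is insensitive to the sign choice up to relabeling), the Lagrangians $\RR^n$ and $i\RR^n$ carry the constant primitives $0$. On $\Gamma \cdot S^{n-1}$, parametrize by $(t,x) \in \RR \times S^{n-1}$ via $j(t,x) = \gamma(t) \cdot x$; pulling back $\theta$ one gets $j^*\theta = \left( \gamma_1(t)\,\dot\gamma_2(t) \right) \sum_i x_i^2 \, dt = \gamma_1 \dot\gamma_2 \, dt$ (using $\sum x_i^2 = 1$), which depends only on $t$. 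Hence a primitive is $g(t) = \int_{t_0}^t \gamma_1(s)\dot\gamma_2(s)\,ds$, a function of $t$ alone, constant on each $S^{n-1}$-slice. The key quantitative input is the \emph{total variation} $\Delta := g(+\infty) - g(-\infty) = \int_{\RR} \gamma_1 \dot\gamma_2\,dt$, which by property (2) of Notation~\ref{notation.Gamma} (the curve is the two coordinate rays outside the box $\AA$) equals the signed area between $\Gamma \cap \AA$ and the coordinate axes — a finite constant depending only on the choice of $\Gamma$ inside the box, i.e.\ only on the local behavior near $p$. Setting $C := |\Delta|$ (or a related constant; one may need to allow rescaling $\Gamma$ inside the chart, which only shrinks $C$) gives the constant in the statement.

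Next I would do the patching. Away from $U$, the surgery agrees with $L_0 \cup L_1$, so we want to use $f_0$ on the $L_0$-branch and $f_1$ on the $L_1$-branch. The issue is matching values where $\Gamma \cdot S^{n-1}$ transitions into $\RR^n$ and into $i\RR^n$: along the ray $[A,\infty) \subset \RR$, the primitive $g(t)$ equals the constant $g(-\infty)$, and along the ray $i[A,\infty)$ it equals $g(+\infty)$. After translating $g$ by a constant we may assume $g(-\infty) = f_0(p)$ (recall the local models were chosen to preserve $f_i(p)$, by the remark following Proposition~\ref{prop.local-model}), so the primitive glues smoothly with $f_0$ along the $L_0$-branch. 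On the $L_1$-branch the surgery primitive takes the value $g(+\infty) = f_0(p) + \Delta$, whereas $f_1$ takes the value $f_1(p)$ near $p$. These agree precisely when $\Delta = f_1(p) - f_0(p)$; in general they differ by $f_1(p) - f_0(p) - \Delta$. Since $f_1$ is locally constant near $p$ along $L_1 = i\RR^n$ and $L_1$ may be disconnected near $p$ only in a controlled way — actually $i\RR^n \cap \phi(U)$ is connected — we may simply \emph{replace} $f_1$ by $f_1 - \big(f_1(p) - f_0(p) - \Delta\big)$ on the component of $L_1$ meeting $p$; this is still a valid primitive (a primitive is only well-defined up to locally constant functions, cf.\ Definition~\ref{defn.primitive}). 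This shift is allowed provided it is consistent with declaring the brane to be $L_1^\sigma$ as in Proposition~\ref{prop.brane-surgery}; but shifting a primitive by a constant does not change the grading or $Pin$ data, so it is orthogonal to the $\sigma$ adjustment there. The condition $|f_0(p) - f_1(p)| < C$ is what guarantees that, possibly after first rescaling $\Gamma$ within the Darboux chart to adjust $\Delta$ into the required value (the achievable values of $\Delta$ range over an interval around $0$ of half-length $C$), we can arrange $\Delta = f_1(p) - f_0(p)$ on the nose, so \emph{no} shift of $f_1$ is needed and the primitive agrees with $f_i$ on the nose along each branch.

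Then I would assemble: define $f_{\sharp}$ on $L_0 \sharp_p L_1$ to be $f_0$ on $(L_0 \cup L_1 \setminus U) \cap L_0$, $f_1$ on $(L_0 \cup L_1 \setminus U)\cap L_1$, and $\phi^*\big(g + f_0(p)\big)$ on $\phi^{-1}(\Gamma \cdot S^{n-1})$; smoothness of $df_\sharp = \theta|_{L_0 \sharp_p L_1}$ follows because on the overlap regions (the coordinate rays) all three formulas agree and have matching derivatives, being locally constant there. Finally, note $L_0 \sharp_p L_1$ is exact by construction since it admits a global primitive, and the agreement with $f_i$ along $\big(L_0 \sharp_p L_1\big) \cap L_i$ is built in. The main obstacle I anticipate is \textbf{bookkeeping the constant $C$ and the orientation/sign conventions}: one must verify that the set of values $\Delta$ realizable by an admissible curve $\Gamma$ (subject to the three conditions of Notation~\ref{notation.Gamma} and fitting inside the fixed Darboux chart) really is an interval symmetric about $0$ — or at least contains a symmetric interval — so that the hypothesis $|f_0(p) - f_1(p)| < C$ suffices; and one must check that the sign of $\Delta$ under the conventions of Warning~\ref{warning.complex-structure-conjugate} matches $f_1(p) - f_0(p)$ rather than its negative (if the latter, one passes to the reverse surgery $\overline\Gamma \cdot S^{n-1}$ of Remark~\ref{remark.surgery-reverse}, or relabels $L_0 \leftrightarrow L_1$). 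All of this is elementary but needs care; the non-compactness of $L_i^0$ plays no role here since the construction is entirely local near $p$ and uses the $f_i$ as given elsewhere.
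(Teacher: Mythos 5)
Your proposal is correct and takes essentially the same approach as the paper: both arguments reduce the matching of primitives to a Stokes-type computation identifying the achievable jump with the area enclosed by $\Gamma$ and the coordinate axes, and both take $C$ to be the supremum of these achievable areas over the admissible Darboux charts. The only cosmetic difference is that you compute $j^*\theta = \gamma_1\dot\gamma_2\,dt$ directly rather than invoking Stokes on the region $R$, and you fold the $n=1$ (disconnected $S^0$) and $n\ge 2$ cases together rather than treating them separately, but the content is the same.
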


\begin{remark}
If $\dim M \geq 2n$ and one does not care about respecting particular primitives $f_i: L_i \to \RR$, then one does not need the full hypotheses of Proposition~\ref{prop.surgery-cobordism} to see that the surgery admits a primitive; the argument is the same in spirit to the argument in Proposition~\ref{prop.brane-surgery}.

We must be more careful when $n=1$; moreover, even for $n \geq 2$, there may be occasion in the future to demand that a primitive $f_L: L \to \RR$ have the property that $f|_{\del L} = 0$. (See Remark~\ref{remark.primitives-along-boundary-vanish}.) We include Proposition~\ref{prop.surgery-primitive} and its proof for these reasons.
\end{remark}

\begin{proof}
We follow Notation~\ref{notation.Gamma}.
Let us choose $A$ small enough so that we know the union $L_0 \cup L_1$ can be modeled inside $[-2A,2A]^{2n} \subset \CC^n$ as $\RR^n \cup i \RR^n$. We let $\theta$ be a Liouville form pulled back along the Darboux chart. We henceforth identify points of $M$, $L_0$, and $L_1$ with points of $\CC^n, \RR^n$, and $i\RR^n$, respectively. 

Without loss of generality, we assume that $f_1(0) > f_0(0)$. 
By choosing $A$ small enough, we can assume that $f_1(x_1) > f_0(x_0)$ for any $x_1 \in i \RR^n, x_0 \in \RR^n$.

We separate the cases $n=1$ (when the surgery is disconnected near $p$)
and $n \geq 2$ (when the local model is connected).

{\bf ($n=1$).} 
Fix $x_0 \in \RR_{<0}$ and $x_1 \in i \RR_{>0}$, both with norm between $A$ and $2A$. We seek a curve $\Gamma$ as in Notation~\ref{notation.Gamma} such that the region $R$ between the curves $\Gamma$, $\RR$, and $i\RR$ has area given by $f_1(0) - f_0(0)$. Of course, if this difference is small enough, such a $\Gamma$ can always be found. By Stokes's Theorem, we have that
	\begin{align}
	\int_{x_0}^{x_1} \theta
	&=
	\int_{[x_0,0] \subset \RR} \theta + 
	\int_{[0,x_1] \subset i \RR} \theta
	- \int_R \omega \nonumber \\
	& =
	f_0(0) - f_0(x_0)
	+ f_1(x_1) - f_1(0)
	+ \int_R \omega \nonumber \\
	& =
	f_1(x_1) - f_0(x_0) \label{eqn.exact-surgery}
	\end{align}
hence by the contractability of $\Gamma$, there exists a unique function $f: \Gamma \to \RR$ defined on $\Gamma$ such that $f = f_1 \coprod f_0$ outside a neighborhood of $0$, and for which $\theta|_\Gamma = df$. An identical computation shows that the entire surgery $\Gamma \cdot S^0 \subset \CC$ admits a primitive $f$ which equals $f_1 \coprod f_0$ outside a neighborhood of $0$. (Here, the notation $f_1 \coprod f_0$ is shorthand for the following: Outside of $[-A,A]^2$, $\Gamma$ has four connected components---two of them are a subset of $\RR$, and two are a subset of $i\RR$. We mean that $f$ restricted to the subset of $\RR$ is equal to $f_0$, while $f$ is equal to $f_1$ when restricted to the subset of $i\RR$.)

For $n \geq 2$, it remains to find a $\Gamma \subset \CC$, and some function $f: \Gamma \cdot S^{n-1} \to \RR$, such that $f$ agrees with $f_0$ along $\RR^n \subset \CC^n$. The same computation as in~\eqref{eqn.exact-surgery} shows that so long as $f_1(0) - f_0(0)$ is small enough (for example, less than $A^2/2$) then such a $\Gamma$ can be found.

Now we make the constant $C$ slightly more explicit, though not by much. (We do not need a precise estimate for this paper.) We set
	\eqnn
	C = \sup A^2/2
	\eqnd
where the sup runs through all $A$ for which we can find a Darboux chart of size $[-2A,2A]^{2n} \subset \CC^n$ in which $L_0$ and $L_1$ are equal to $\RR^n$ and $i\RR^n$, respectively. We conclude by noting that the area of $\Gamma$ above $[0,A] \subset \RR \subset \RR^2$ is bounded sharply by $A^2/2$.
\end{proof}

\begin{remark}
In the proof, we assumed that $f_1(p) > f_0(p)$. If $f_1(p) < f_0(p)$, the same proof follows simply by utilizing $\overline{\Gamma}$ in place of $\Gamma$.
\end{remark}

\subsection{Stabilized surgery}\label{section.stabilized-surgery}
One can modify the local picture of surgery by taking a product with another Lagrangian $L'$. Concretely, suppose that $M = M' \times M''$ is a product. Further, suppose that $L_0, L_1 \subset M$ are Lagrangians satsifying the following:
\begin{itemize}
\item There exist  (i) an open set $W'' \subset M''$, (ii) two Lagrangians $L_0'', L_1'' \subset W''$ that intersect transversally at exactly one point $p''$, and (iii) a Lagrangian $L' \subset M'$ such that
	\enum
		\item $L_i \cap M' \times W''$ is equal to $L' \times L_i''$, and
		\item $L_0 \cap L_1 = L' \times \{p''\}$.
	\enumd 
\end{itemize}
Then there is an embedded Lagrangian submanifold as follows:

\begin{notation}\label{notation.stabilized-surgery}
We let $L_0 \sharp_{L' \times \{p''\}} L_1 \subset M' \times M''$ denote the set
	\eqn
	(L_0 \setminus M' \times W'') \bigcup (L_1 \setminus M' \times W'' ) \bigcup L' \times (L_0'' \sharp_{p''} L_1'').
	\eqnd
We call this the {\em surgery of $L_0$ and $L_1$ along $L_0 \cap L_1$.}
\end{notation}

\begin{remark}
Note that $L_0$ and $L_1$ need not be product Lagrangians themselves, but only in a neighborhood of their intersection.
\end{remark}

\begin{remark}
If $L'$ is an eventually conical Lagrangian, and if $L_0, L_1$ are both eventually conical in the product $M' \times M''$, then $L_0 \sharp_{L' \times \{p\}} L_1$ is also eventually conical in the product. (See Definition~\ref{defn.conical-in-product}.)
\end{remark}

\section{The Lagrangian cobordism associated to a surgery}
As before, fix two transverse Lagrangians $L_0, L_1$ with a unique intersection point $p \in L_0 \cap L_1$. We fix the same $A$ and $\Gamma$ as in Section~\ref{section.surgery}. The main goal of this section is to prove the following:

\begin{prop}\label{prop.surgery-cobordism}
There exists an element $\sigma \in H^0(L_1;\ZZ) \times H^2(L_1;\ZZ/2\ZZ)$ and a Lagrangian cobordism $Q: L_0 \sharp L_1 \to L_0$ such that the mapping cone of $Q$ has a vertically collared end collared by $L_1^{\sigma}$. (See Definition~\ref{defn.vertically-collared} and Notation~\ref{notation.L-sigma}.)
\end{prop}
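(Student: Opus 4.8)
The plan is to construct the cobordism $Q$ explicitly as a ``trace of the surgery,'' exhibiting the surgered Lagrangian, the two original Lagrangians, and the curve $\Gamma$ from Notation~\ref{notation.Gamma} all as slices of a single Lagrangian in $T^*\RR \times M$, and then to identify the mapping cone of this $Q$ with a vertically collared brane whose extra end is $L_1^\sigma$. Concretely, I would first work in the local Darboux model $\CC^n$ near $p$, where $L_0 = \RR^n$ and $L_1 = i\RR^n$ (Proposition~\ref{prop.local-model}), and where the surgery is $\Gamma \cdot S^{n-1}$ (Proposition~\ref{prop.local-surgery}). The key observation is that the family of curves $\{e^{-s}\Gamma\}_{s}$ (together with suitable rescaling of the box $\AA$) interpolates between $\Gamma$ itself and the broken curve $\RR_{\geq 0} \cup i\RR_{\geq 0}$; multiplying by $S^{n-1}$ produces a one-parameter family of Lagrangians in $\CC^n$ interpolating between $\Gamma \cdot S^{n-1}$ and $\RR^n \cup i\RR^n$. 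Taking the Lagrangian suspension (Lagrangian cylinder) of this isotopy over the $\RR$-direction of $T^*\RR$, and gluing to the product cobordism on $L_0 \cup L_1 \setminus U$ outside the neighborhood, gives a Lagrangian in $T^*\RR \times M$; after a Hamiltonian isotopy making it collared at the two ends, this is our morphism $Q$ with $Q|_{s \ll 0} = L_0 \sharp L_1$ and $Q|_{s \gg 0} = L_0$ (or a disjoint union of $L_0$ with a copy of $L_1$ that can be ``pushed off'' — this is exactly the point where the mapping cone structure enters).

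Next, I would compute the mapping cone of $Q$ using the informal model from Section~\ref{section.mapping-cone}: $\cone(Q)$ is obtained by bending the $L_0$-collared end of $Q$ upward into the positive cotangent direction so that it becomes $\Lambda$-avoiding (hence a zero object by Proposition~\ref{prop.lambda-avoiding-zero-objects}), while keeping the $L_0 \sharp L_1$ end. The claim is that after this bending, the resulting brane is vertically collared in the sense of Definition~\ref{defn.vertically-collared}, with one collared end equal to $L_0 \sharp L_1$ and a second collared end equal to $L_1$ with some shifted brane structure $L_1^\sigma$. Geometrically this is because, near $p$, bending the $\RR^n$-end of $\Gamma \cdot S^{n-1}$ up and away detaches a copy of $i\RR^n$ (the ``other'' branch of the surgery near infinity in the $\Gamma$-direction); tracking the squared-phase and Pin data along the rotation path $\RR^n \rightsquigarrow i\RR^n$ (as in Example~\ref{example.surgery-gauss-map} and the grading computations following it) shows this detached branch carries the brane structure $L_1^\sigma$ rather than $L_1$, with $\sigma$ precisely the element produced in Proposition~\ref{prop.brane-surgery}. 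The grading shift is the one appearing in the grading examples, namely the $\frac12 - \frac{n-1}{2}$ shift, together with the Pin-structure twist from Proposition~\ref{prop.brane-surgery}.

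For the brane-structure bookkeeping, I would proceed as in the proof of Proposition~\ref{prop.brane-surgery}: the cobordism $Q$ itself must be equipped with grading, Pin, and primitive data. The grading and Pin data are obtained by extending the brane structures across the isotopy family $\{e^{-s}\Gamma \cdot S^{n-1}\}$ — since the family is a single connected Lagrangian (diffeomorphic to $\RR \times (\Gamma\cdot S^{n-1})$, itself homotopy equivalent to $S^{n-1}$), the obstruction-theoretic arguments from Remark~\ref{remark.classifying-grassmannian} and Proposition~\ref{prop.brane-surgery} apply verbatim. The primitive on $Q$ is where the exactness hypothesis enters: here I invoke Proposition~\ref{prop.surgery-primitive} to guarantee that, after a compactly supported Hamiltonian isotopy adjusting the value $f_1(p) - f_0(p)$ to lie below the constant $C$ (such an isotopy exists and induces an equivalence by Proposition~\ref{prop.linear-isotopy-equivalence}), the surgery and the whole cobordism $Q$ carry primitives restricting correctly to the ends. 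Combining these, the collared ends of $\cone(Q)$ inherit well-defined brane structures, and comparing with the fixed brane structures on $L_0 \sharp L_1$ and on $L_1$ pins down $\sigma$.

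The main obstacle I anticipate is not the construction of the underlying Lagrangian cobordism — that is a fairly standard Lagrangian-suspension argument — but rather two more delicate points. First, verifying that $\cone(Q)$ is genuinely \emph{vertically collared} in the precise sense of Definition~\ref{defn.vertically-collared}, i.e.\ that after bending the $L_0$-end away one really obtains a clean product decomposition $\gamma \times X$ outside a compact set with $X = L_1^\sigma$ near the relevant end, rather than merely something Hamiltonian-isotopic to such a thing; this requires choosing the bending isotopy and the curve family $e^{-s}\Gamma$ compatibly so that no spurious intersections or non-conical behavior appear near $p$, and keeping careful track of which branch of $\Gamma \cdot S^{n-1}$ becomes which collared end. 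Second, getting the brane structure on the detached end to be exactly $L_1^\sigma$ — and not, say, $L_1^{\sigma}$ shifted by an additional integer — requires a careful computation of the grading along the suspension direction, which is exactly the kind of squared-phase computation carried out in the grading Examples above, now done one dimension up; the sign conventions of Warning~\ref{warning.complex-structure-conjugate} must be threaded through consistently.
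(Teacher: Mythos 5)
Your high-level plan is the right one and matches the paper's: build a ``trace of the surgery'' cobordism $Q: L_0 \sharp L_1 \to L_0$, identify the cone of $Q$ by bending the $L_0$-end off the zero section, and do the brane/grading bookkeeping as in Proposition~\ref{prop.brane-surgery}, tracking the phase along the $\RR^n \rightsquigarrow i\RR^n$ rotation (Example~\ref{example.surgery-gauss-map}) and feeding the exactness requirement through Proposition~\ref{prop.surgery-primitive}. You also correctly isolate the two delicate points. However, the specific mechanism you propose for building the local model of $Q$ does not work, and it differs from what the paper does.

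You propose taking the family $\{e^{-s}\Gamma \cdot S^{n-1}\}_s$ and Lagrangian-suspending it over $T^*\RR$. But for every finite $s$, the Lagrangian $e^{-s}\Gamma \cdot S^{n-1}$ is just a rescaling of the surgery $\Gamma \cdot S^{n-1}$ --- in particular, it is again an embedded surgery, Hamiltonian isotopic to the one you started with. The singular limit $\RR^n \cup i\RR^n$ is only attained asymptotically as $s \to \infty$; it is never a slice of the suspended Lagrangian. So the suspension you describe is a cobordism collared at both ends by (copies of) $L_0 \sharp L_1$: it is an \emph{equivalence}, not the morphism $Q : L_0 \sharp L_1 \to L_0$ with a vertical $L_1$-collar that the Proposition requires. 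Cutting off the family at finite $s_0$ does not help, and allowing $s \to \infty$ produces a non-conical, non-collared end, which is not a valid morphism in $\lag(M)$.

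The paper's construction avoids this entirely by passing to one dimension higher. Instead of a one-parameter family of copies of $\Gamma \cdot S^{n-1}$, the local model (Notation~\ref{notation.Gamma-disk}, Notation~\ref{notation.L-1}) is the single Lagrangian $\Gamma \cdot D^n_{+} \subset \CC^{n+1}$, where $D^n_+$ is a hemisphere of $S^n$ rather than $S^{n-1}$. Under the projection $\pi_0: \CC^{n+1} \to \CC \cong T^*\RR$, the fiber over $0$ is exactly $\Gamma \cdot S^{n-1}$ (the surgery), while the fibers over $z \in [A,\infty)$ and $z \in i[A,\infty)$ are genuine copies of $\RR^n$ and $i\RR^n$ --- not surgeries --- and the intermediate fibers interpolate through $\RR^n$ minus a ball, spheres, etc. (Remark~\ref{remark.fibers-of-pi}). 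It is precisely this ``extra'' hemisphere direction that lets the cobordism close up onto $L_0$ at the horizontal end, with $L_1$ peeling off as a vertical collar, and no suspension argument is needed. After that the paper straightens $Q^{(1)}$ to $Q^{(2)}$ in a Darboux--Weinstein chart along $\beta$ and glues $\grph(dh) \times L_0 \sharp L_1$ to produce the genuine morphism $Q^{(3)}$. Your brane and exactness bookkeeping would carry over to this construction essentially unchanged, but the core geometric input must be the disk model $\Gamma \cdot D^n_+$, not a suspension of $e^{-s}\Gamma \cdot S^{n-1}$.
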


To prove Proposition~\ref{prop.surgery-cobordism} we explicitly construct $Q$; we first learned the construction from~\cite{biran-cornea} (there, the cobordism is called a {\em trace} of the surgery). We present a slightly modified version to account for eventually conical surgeries. We construct three different subsets 
	\eqnn
	Q^{(0)}, Q^{(1)}, Q^{(2)}, Q^{(3)} \subset T^*\RR \times M.
	\eqnd. 
The last of these, $Q^{(3)} =: Q$, will be the cobordism we seek.

\subsection{$Q^{(0)}$}
\begin{notation}\label{notation.L-0}
Fix the usual diffeomorphism $T^*\RR \cong \CC$, and let 
	\eqnn
	i \RR_{>0} = \{(x,y) \, | \, x = 0, y > 0\},
	\qquad
	\RR_{>0} := \{(x,y) \, | x > 0, y = 0 \}.
	\eqnd
We define
	\eqnn
	Q^{(0)}
	:=
	\{(0,0)\} \times L_0 \sharp_p L_1
	\coprod
	\RR_{>0} \times L_0
	\coprod
	i\RR_{>0} \times L_1
	\subset
	T^*\RR \times M.
	\eqnd
\end{notation}

\begin{remark}
$Q^{(0)}$ is a submanifold with boundary, but is not closed as a subset of $T^*\RR \times M$.
\end{remark}

\subsection{$Q^{(1)}$}

\begin{notation}
Let $S^{n} \subset \RR^{n+1} \subset \CC^{n+1}$ be the unit sphere. We let $D^{n}_{+}$ denote its upper hemisphere: 
	\eqnn
	D^{n}_{+} = 
	\{
	(x_0,\ldots,x_n) \, | \, x_0 \geq 0
	\land
	\sum_{i=0}^n x_i^2 = 1.
	\}
	\eqnd
We will fix a small open neighborhood $D^n_{+} \subset U(D^n_{+}) \subset S^n$, namely the set of those $x \in S^n$ whose $0$th coordinate satisfies $x_0 > -a$ for some fixed, small real number $a>0$.
\end{notation}

\begin{notation}\label{notation.Gamma-disk}
We let $\Gamma\cdot D^n_{+} \subset \CC^{n+1}$ denote the set
	\eqnn
	\{
	(z x_0,\ldots,z x_n)
	\, | \,
	z \in \Gamma 
	\land
	(x_0,\ldots,x_n) \in D^n_{+}
	\}.
	\eqnd
Likewise, $\Gamma \cdot U(D^n_+)$ denotes the obvious analogue.
\end{notation}

\begin{remark}\label{remark.pi-0-Gamma-disk}
Consider the projection $\pi_0 : \CC^{n+1} \to \CC$ given by $(z_0,\ldots,z_n) \mapsto z_0$. Then the image $\pi_0 (\Gamma \cdot D^n_{+})$ is the convex hull generated by the origin and the curve $ \Gamma \subset \CC$.
\end{remark}

\begin{remark}
The same reasoning as in the proof of Proposition~\ref{prop.local-surgery} shows that $\Gamma\cdot D^n_{+}$ is a Lagrangian submanifold of $\CC^{n+1}$ diffeomorphic to $\RR \times D^n$---in particular, it is contractible. Moreover, $\RR \times D^n$ is a smooth manifold with boundary $\RR \times S^{n-1}$; the boundary of $\Gamma \cdot D^n_{+}$ is precisely the fiber of $\pi_0$ above the origin $0 \in \CC$. Note that (because we have chosen $\Gamma, A$ as in Section~\ref{section.surgery}) this boundary is precisely the surgery of $\RR^n$ with $i\RR^n$.
\end{remark}

\begin{remark}\label{remark.brane-structure-on-local-cobordism}
Note that the following diagram commutes:
	\eqnn
	\xymatrix{
		&	S^{n} \ar[r] & 	\RR^{1+n} \ar[r] & U_{1+n}/O_{1+n} \\
	S^{n-1} \ar[ur] \ar[r] & \RR^{n} \ar[ur] \ar[r] & U_n / O_n \ar[ur]^{\oplus \RR}
	}
	\eqnd
where the map $U_n/O_n \to U_{1+n}/O_{1+n}$ is the same stabilizing map as in Notation~\ref{notation.GrLag}. In particular, consider the Gauss map on $\Gamma \cdot S^n$ induced by the homotopy equivalence $S^n \simeq \Gamma \cdot S^n$, and restrict it to $\Gamma \cdot D^n_+$. This Gauss map agrees with the Gauss map from $\Gamma \cdot S^{n-1} = \pi_0^{-1}(0)$ utilized in the proof of Proposition~\ref{prop.brane-surgery}. In particular, the brane structure on $\Gamma \cdot D^n_+$ restricts to the brane structure on $\Gamma \cdot S^{n-1} = \RR^n \sharp i \RR^n$ described in that proof.
\end{remark}

Now let us choose a Darboux chart $\phi: V \into \CC^n$ as in Proposition~\ref{prop.local-model} about $p = L_0 \cap L_1$. Again denoting the same $A$ as always, let
	\eqnn
	j: [-A,A]^2 \cong [-A,A] \times i[-A,A] \into \CC
	\eqnd
be the obvious inclusion. 

\begin{notation}\label{notation.L-1}
We define $Q^{(1)}$ to be the union
	\eqnn
	\left(
	Q^{(0)}
	\setminus
	[-A,A]^2 \times V
	\right)
	\bigcup
	(j \times \phi^{-1})
	\left(
	\Gamma \cdot U(D^n_+) \cap ([-A, A]^2 \times \phi(V))
	\right)
	\eqnd
\end{notation}

\begin{remark}
Informally, $Q^{(1)}$ is obtained from $Q^{(0)}$ by replacing a neighborhood of $(0,p) \in Q^{(0)} \subset T^*\RR \times M$ with $\Gamma \cdot U(D^n_+)$ (see Notation~\ref{notation.Gamma-disk}).
\end{remark}

\begin{remark}
Let $\pi: T^*\RR \times M \to T^*\RR$  be the projection to the first factor. Then $\pi(Q^{(1)}) = \pi_0(\Gamma \cdot D^n_+)$. (See Remark~\ref{remark.pi-0-Gamma-disk}.)
\end{remark}

\begin{remark}
Note that $Q^{(1)}$ is an eventually conical Lagrangian in $T^*\RR \times M$. To see this, we note that $Q^{(0)}$ is eventually conical, while $Q^{(1)}$ is obtained by altering $Q^{(0)}$ is a bounded neighborhood. 
\end{remark}

\begin{remark}
Note that $Q^{(1)}$ is not closed as a subset of $T^*\RR \times M$. 
\end{remark}

\begin{remark}\label{remark.fibers-of-pi}
Let us describe the fibers of the map $\pi: Q^{(1)} \to T^*\RR$, which projects a point $(z, x) \in T^*\RR \times M$ to $z$. 
\begin{enumerate}[(a)]
\item Let $z =0$. Then $\pi^{-1}(z) = L_0 \sharp L_1$.
\item Let $z \in (0,A) \subset \RR \subset T^*\RR$. Then the fiber is equal to a set obtained from $L_0$ by removing an open ball containing $p$.
\item Let $z \in [A, \infty) \subset \RR \subset T^*\RR$. The fiber is $L_0$.
\item\label{item.vertical-collar-Q} Likewise, if $z \in i(0,A) \subset i\RR \times T^*\RR$, the fiber above $z$ is a set obtained from $L_1$ by removing an open ball containing $p$. If $z \in i[A, \infty)$, then $\pi^{-1}(z)$ is $L_1$.
\item Finally, let $z$ be a point point along $\Gamma$ which is not purely imaginary, nor real. Then $\pi^{-1}(z)$ is a single point in some small neighborhood of $p$. If $z$ is a point on the interior of the convex hull of $\Gamma$ and the origin, then $\pi^{-1}(z)$ is diffeomorphic to a sphere $S^{n-1} \subset M$ contained in a small ball about $p$.  
\end{enumerate}
We leave it to the reader to analyze the analogous fibers of points above the third quadrant of $T^*\RR \cong \CC$.
\end{remark}

\subsection{$Q^{(2)}$}

To construct $Q^{(2)}$, first consider the curve $\beta \subset T^*\RR$ given as the graph of $d({\frac 1 2 }q^2)$---that is, under the standard trivialization of $T^*\RR \cong \RR^2$, the locus $\{(q,q)\}$. 

\begin{notation}\label{notation.product-darboux-weinstein-charts}
Let us choose a Darboux-Weinstein chart of $\beta \subset T^*\RR$, and of $L_0 \sharp L_1 \subset M$. Then the direct product of these charts is a Darboux-Weinstein chart for $\beta \times L_0 \sharp L_1$:
	\eqn\label{eqn.product-darboux-weinstein}
	T^* \RR \times T^*(L_0 \sharp L_1) \supset W \xra{\phi} T^*\RR \times M
	\eqnd
As indicated in~\eqref{eqn.product-darboux-weinstein}, we denote the by chart $(W,\phi)$.
\end{notation}

\begin{remark}\label{remark.submersion}
Consider the induced map
	\eqnn
	p:
	\phi(W) \cap Q^{(1)}
	\xra{\phi^{-1}}
	T^*(\RR \times L_0 \sharp L_1)
	\to
	\RR \times L_0 \sharp L_1.
	\eqnd
(The last map is the projection to the zero section.) This composition is a submersion.
\end{remark}

\begin{remark}\label{remark.submersion-W}
Moreover, the submersion can be explicitly understood away from the set $T^*\RR \times U_p$, where $U_p \subset M$ is a well-chosen neighborhood of $p$. By Remark~\ref{remark.fibers-of-pi}, and by the fact that we have chosen a product Darboux-Weinstein chart (Notation~\ref{notation.product-darboux-weinstein-charts}), the map $p$ sends a point $(z_0, x) \in i\RR \times L_0 \sharp L_1$ to the point $(z_0',x)$ where $z_0'$ is the point on $\beta$ closest to $z_0$.
\end{remark}

\begin{remark}\label{remark.LL2-ham-isotopy}\label{remark.B-in-beta}
By shrinking $W$ if necessary, we conclude that $\phi^{-1}(\phi(W) \cap Q^{(1)})$ is the graph of an exact 1-form defined on $B \times L_0 \sharp L_1$ where $B$ is some open subset of $\beta$ containing the origin. In particular, there is a Hamiltonian isotopy from $\phi(W) \cap Q^{(1)}$ to $B \times L_0 \sharp L_1$. By Remark~\ref{remark.submersion-W}, we conclude that this isotopy can be chosen to be constant in the $L_0 \sharp L_1$ factor.  
\end{remark}

\begin{notation}
We let $Q^{(2)}$ be the Lagrangian obtained from $Q^{(1)}$ by applying (in an open neigborhood of $\{(0,0)\} \times L_0 \sharp L_1$) the isotopy from Remark~\ref{remark.LL2-ham-isotopy}. 
\end{notation}

\begin{remark}\label{remark.LL2-conical}
$Q^{(2)}$ is a Lagrangian submanifold of $T^*\RR \times M$ which, away from the closure of the first quadrant $\RR_{\geq 0} \times \RR_{\geq 0} \times M \subset \RR \times \RR \times M \cong T^*\RR \times M$, is equal to $\beta_{<0} \times L_0 \sharp L_1$. Here, $\beta_{<0}$ is the set $\{(q,q) \, | q < 0 \}$. 

Moreover, there is compact set $K \subset M$ of $p \in L_0 \cap L_1$ such that outside of $[0,A] \times i[0,A] \times K \subset \CC \times M$, $Q^{(2)}$ is eventually conical in the product $T^*\RR \times M$. (See Definition~\ref{defn.conical-in-product} and Remark~\ref{remark.fibers-of-pi}.)
\end{remark} 

\subsection{$Q^{(3)}$}
Finally, let $h: (-\infty,0] \to \RR$ be a smooth function such that $h=0$ near $-\infty$, so that $\grph(dh) \subset T^*\RR$ equals $\beta$ in a neighborhood of $B$. (See Remark~\ref{remark.B-in-beta}.) 

\begin{notation}\label{notation.LL3}
We let $Q^{(3)} \subset T^*\RR \times M$ denote the Lagrangian obtained by gluing $\grph(dh) \times L_0 \sharp L_1$ to $Q^{(2)}$ along $B \times L_0 \sharp L_1$. (See Figure~\ref{figure.LL3}.)
\end{notation}

\begin{figure}
\begin{tabular}{lllll}
		\raisebox{-.1\height}
		{\includegraphics[height=1in]{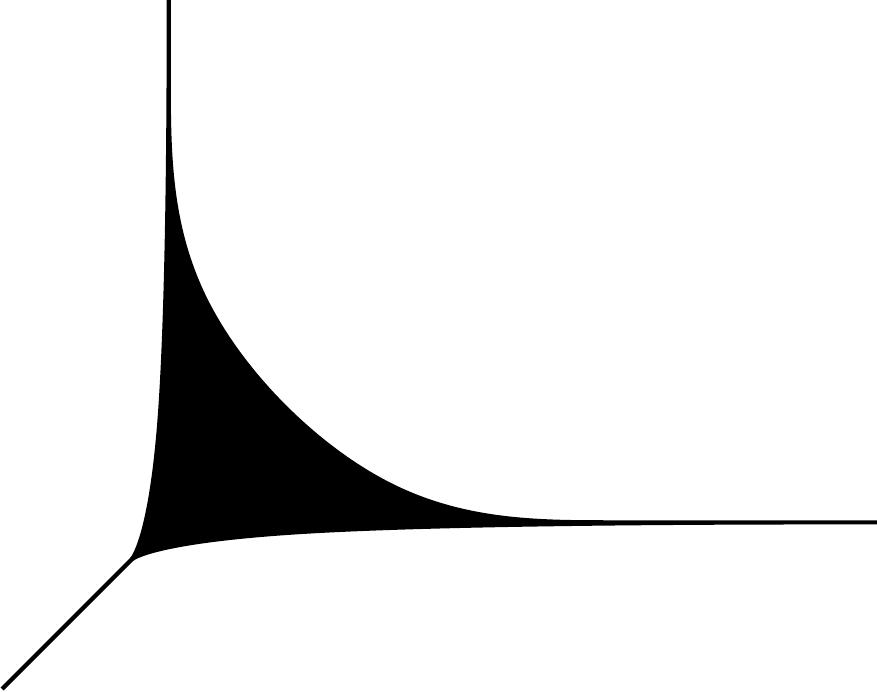}}
		& $\qquad$&
		{\includegraphics[height=0.9in]{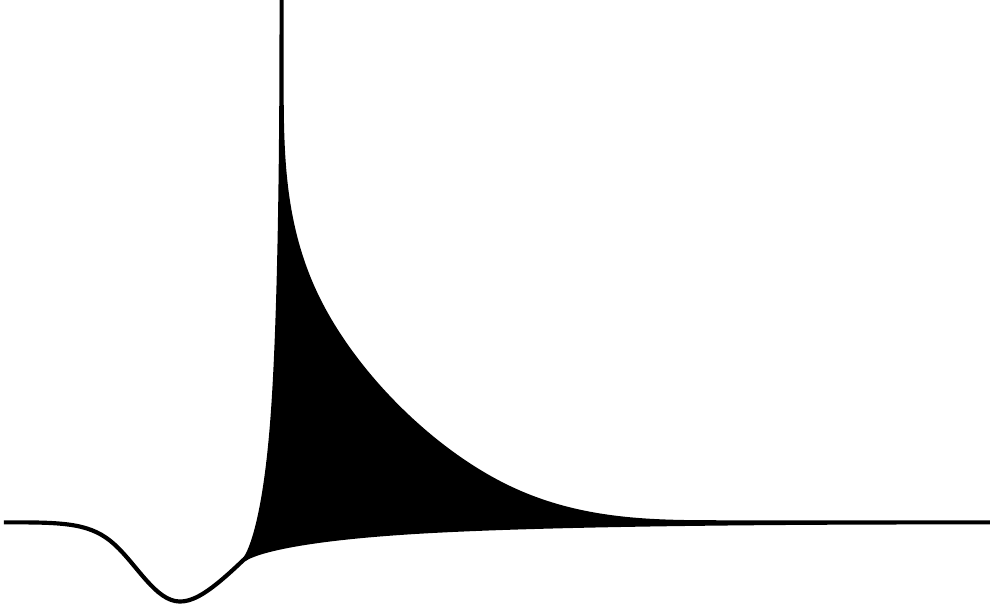}}
		& $\qquad$&
		\raisebox{-.4\height}{\includegraphics[height=1.5in]{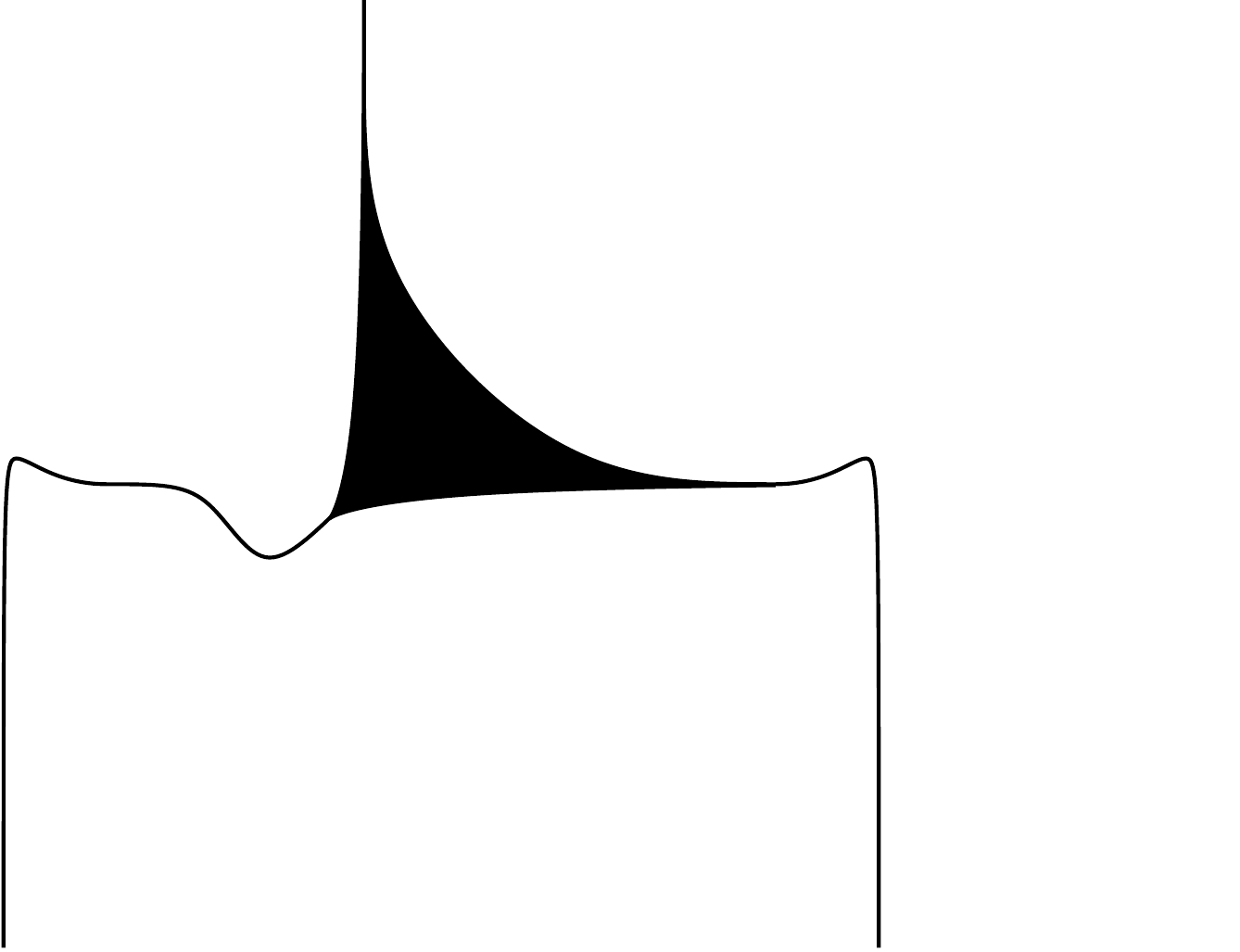}}
\end{tabular}
\caption{
On the left, an image of $Q^{(2)}$ projected onto $T^*\RR$. 
In the middle, an image of $Q^{(3)}$ projected onto $T^*\RR$. 
To the right, an image of the kernel of the resulting cobordism. Note that it is vertically collared when $p>>0$ by the brane $L_1$, and the brane structure there agrees with that of the stabilization $L_1^\dd$.
}\label{figure.LL3}
\end{figure}

\begin{remark}\label{remark.LL3-cobordism}
For $q_0 <<0$ , $Q^{(3)} \cap T^*(-\infty,q_0) \times M$ is equal to  $(-\infty, q_0) \times L_0 \sharp L_1$. For $q_1 >>0$, $Q^{(3)}$ is collared by $L_0$. That is, $Q^{(3)}$ is a Lagrangian cobordism from $L_0 \sharp L_1$ to $L_0$.
\end{remark}

\begin{remark}\label{remark.LL3-exact}
$Q^{(2)}$ is an exact Lagrangian. 
Note that $h$ can be chosen so that $Q^{(3)}$ is also an exact Lagrangian, with primitive along $q_0<<0$ given by $0 \oplus f_{L_0 \sharp L_1}$ where $f_{L_0 \sharp L_1}$ is the primitive on the surgery. (See Proposition~\ref{prop.surgery-primitive}.)
\end{remark}

\begin{remark}\label{remark.LL3-product-with-curves}
By construction, there exists a bounded neighborhood $O \subset M$ containing $p = L_0 \cap L_1$ such that, in the complement of $T^*\RR \times O$, the Lagrangian $Q^{(3)}$ is equal to the folloing:
	\eqnn
	\gamma_0 \times (L_0 \setminus O) 
	\coprod
	\gamma_1 \times (L_1 \setminus O)
	\coprod
	\gamma_2 \times (L_0 \sharp L_1 \setminus O)
	\eqnd 
where $\gamma_i$ are (non-compact) curves in $T^*\RR$.
\end{remark}

\subsection{Proof of Proposition~\ref{prop.surgery-cobordism}}

\begin{proof}[Proof of Proposition~\ref{prop.surgery-cobordism}.]
It follows from Remark~\ref{remark.LL2-conical} that $Q^{(3)}$ is eventually conical in the product. Moreover, Remark~\ref{remark.LL3-exact} shows $Q^{(3)}$ can be given a primitive collared by $f_{L_0 \sharp L_1}$ and $f_{L_0}$. Finally, it is clear that $Q^{(3)}$ admits a brane structure--$Q^{(2)}$ does because it is obtained by isotoping a brane, and $Q^{(3)}$ is obtained by gluing on $\grph(dh) \times L_0 \sharp L_1$.

Hence by Remark~\ref{remark.LL3-cobordism}, $Q^{(3)}$ is indeed a morphism in $\lag(M)$ from $L_0 \sharp L_1$ to $L_0$. The last assertion about vertical collaring follows from Remark~\ref{remark.fibers-of-pi}~\ref{item.vertical-collar-Q}.
\end{proof}

\subsection{The Lagrangian cobordism associated to a stabilized surgery}
Now let us assume we are in the situation of Section~\ref{section.stabilized-surgery}.

\begin{prop}\label{prop.stabilized-surgery-cobordism}
Stabilized surgery induces a morphism in $\lag(M)$ from $L_0 \sharp_{L' \times \{p\}} L_1$ to $L_0$. The mapping cone of this morphism has a vertical collar, collared by $L_1$.
\end{prop}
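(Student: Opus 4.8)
The plan is to reduce Proposition~\ref{prop.stabilized-surgery-cobordism} to the already-proven Proposition~\ref{prop.surgery-cobordism} by exploiting the product structure near the intersection. Recall that in the stabilized setting we have $M = M' \times M''$, the intersection locus is $L' \times \{p''\}$ with $L' \subset M'$ and $p'' = L_0'' \cap L_1'' \in M''$, and in a neighborhood $M' \times W''$ of the intersection each $L_i$ looks like $L' \times L_i''$. The key observation is that the trace cobordism construction of Section~\ref{section.surgery}--\ref{section.stabilized-surgery} is \emph{local near the intersection}: the sets $Q^{(0)},\ldots,Q^{(3)}$ differ from $\gamma_0 \times L_0 \coprod \gamma_1 \times L_1$ (for appropriate curves $\gamma_i \subset T^*\RR$) only inside $T^*\RR \times (\text{a compact neighborhood of the intersection locus})$. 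So the plan is to run exactly the same four-step construction, but where the ``local surgery model'' $\Gamma \cdot U(D^n_+) \subset \CC^{n+1}$ is replaced by its product with $L'$.

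First I would set up the local model: using Proposition~\ref{prop.local-model} applied to $L_0'', L_1'' \subset W''$, choose a Darboux chart identifying $(L_0'', L_1'')$ near $p''$ with $(\RR^{n''}, i\RR^{n''}) \subset \CC^{n''}$, where $2n'' = \dim M''$. Then in $T^*\RR \times M' \times M''$ one forms $Q^{(1)}_{L'}$ by the same recipe as Notation~\ref{notation.L-1}, but replacing the plug $(j \times \phi^{-1})(\Gamma \cdot U(D^{n''}_+) \cap \cdots)$ with its product with $L'$ — i.e.\ gluing in $L' \times \big((j\times\phi^{-1})(\Gamma\cdot U(D^{n''}_+)\cap\cdots)\big)$. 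Because $L'$ enters only as a passive factor and the symplectic form is a direct sum, all the Lagrangian/exactness computations of Section~\ref{section.surgery} go through verbatim on the $M''$ factor; one uses that a product of Lagrangians is Lagrangian, a product of exact Lagrangians is exact with primitive the sum of primitives (Proposition~\ref{prop.surgery-primitive} handles the $M''$ piece), and a product of branes is a brane. Then $Q^{(2)}$ and $Q^{(3)}$ are built identically (Notations after Remark~\ref{remark.submersion}), the Hamiltonian isotopy of Remark~\ref{remark.LL2-ham-isotopy} being again taken constant in the relevant directions.

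The remaining points to verify are then: (i) $Q^{(3)}_{L'}$ is eventually conical \emph{in the product} $T^*\RR \times M$ — here one invokes the hypothesis that $L'$ is eventually conical and that $L_0, L_1$ are eventually conical in the product, together with the remark in Section~\ref{section.stabilized-surgery} that $L_0 \sharp_{L'\times\{p\}} L_1$ is eventually conical in the product; the argument is the same as Remark~\ref{remark.LL2-conical}, noting that $Q^{(3)}_{L'}$ is obtained from a product-conical Lagrangian by altering it inside $T^*\RR \times (\text{bounded set})$; (ii) it avoids $\sk(M)$ in the negative cotangent direction of $T^*\RR$ appropriately, which holds because the construction changes nothing away from the compact region and the curves $\gamma_i$ are chosen as before; (iii) exactness and a brane structure, handled exactly as in the proof of Proposition~\ref{prop.surgery-cobordism}; and (iv) the vertical-collar assertion: by the analogue of Remark~\ref{remark.fibers-of-pi}\ref{item.vertical-collar-Q}, the fiber of $\pi \colon Q^{(3)}_{L'} \to T^*\RR$ over $z \in i[A,\infty)$ is exactly $L_1$, so the mapping cone (computed as in Section~\ref{section.mapping-cone}) is vertically collared by $L_1$, as claimed.

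The main obstacle — and it is mild — is bookkeeping the eventual-conicality-in-the-product condition throughout: the curves $\gamma_i \subset T^*\RR$ and the compact neighborhood $O \subset M$ of the intersection locus must be chosen compatibly so that, outside $T^*\RR \times O$, $Q^{(3)}_{L'}$ is a finite disjoint union of products $\gamma_i \times (\text{eventually conical})$ in the sense of Definition~\ref{defn.conical-in-product}; one must check that the product $L' \times L_i''$ structure of $L_i$ near the intersection, combined with the eventual conicality of $L_i$ away from it, glues into a single such description. This is exactly the same issue already navigated in Remark~\ref{remark.LL3-product-with-curves} and Remark~\ref{remark.LL2-conical}, now carried out one symplectic factor down, so no genuinely new difficulty arises; I would simply state the needed compatibility of $O$ with the neighborhood $W''$ and refer back to those remarks. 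A minor note on brane structures: one should observe that the relevant $\sigma$ for $L_1$ is trivial here (or record that the collaring brane is $L_1$ on the nose), since the grading contribution computed in Example~\ref{example.surgery-gauss-map} involves only the $M''$-dimension $n''$ and the disk $iD^{n''}$ sits inside the $L_1''$ factor, matching the brane structure already carried by $L_1$ after the analogue of Proposition~\ref{prop.brane-surgery}.
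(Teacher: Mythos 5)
Your approach is essentially the same as the paper's: both build the cobordism by taking the unstabilized trace cobordism for $L_0'' \sharp_{p''} L_1''$ in $T^*\RR \times M''$, replacing the compactly supported ``active'' region near the intersection by its product with $L'$, and gluing on products $\gamma_i \times (L_i \setminus M' \times W'')$ for the same curves $\gamma_i$ as in Remark~\ref{remark.LL3-product-with-curves}. The paper simply writes the final union down in one line rather than re-running each of the four stages $Q^{(0)},\dots,Q^{(3)}$, but the geometric content is identical. One small caveat: your closing claim that the twist $\sigma$ is trivial is not actually justified by the argument you give — the paper's own proof hedges with ``possibly after acting on $L_1$ by some group element $\sigma$'', and nothing about restricting attention to the $M''$-factor forces $\sigma$ to vanish (Proposition~\ref{prop.brane-surgery} only asserts existence of some $\sigma$); if you want the collar to be $L_1$ on the nose you should either incorporate $\sigma$ into the statement or explain why it can be absorbed elsewhere.
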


\begin{proof}
Let $Q^{(3)}$ denote the cobordism from $L_0' \sharp_p L_1'$ to $L_0'$ (see Notation~\ref{notation.LL3}). Let $\gamma_0, \gamma_1, \gamma_2 \subset T^*\RR$ be the curves from Remark~\ref{remark.LL3-product-with-curves}, and let $Z = Q^{(3)} \cap T^*\RR \times O$ (again in the notation of Remark~\ref{remark.LL3-product-with-curves}). Then the desired Lagrangian cobordism is constructed as follows:
	\eqnn
	\gamma_0 \times (L_0 \setminus M' \times W'') 
	\bigcup
	\gamma_1 \times (L_1 \setminus M' \times W'')
	\bigcup
	\gamma_2 \times (L_0 \sharp_{L' \times p''} L_1 \setminus M' \times W'')
	\bigcup
	Z \times L''. 
	\eqnd
The vertical collaring is obvious. The verification that the cobordism admits a brane structure (possibly after acting on $L_1$ by some group element $\sigma$) is straightforward and similar to our previous arguments, so we leave it to the reader.
\end{proof}

\section{Filtrations from vertically collared branes}

\begin{theorem}\label{theorem.vertical-collar-equivalence}
Let $L \subset T^*\RR \times M$ be an object of $\lag(M)$ which is vertically collared. Further assume that when writing $L$ as in~\eqref{eqn.vertically-collared}, there is a unique $i$ such that $\gamma_i$ attains an arbitrarily large, positive $p$-coordinate in $T^*\RR$. Then $L$ is equivalent to $X_i$ in $\lag(M)$.
\end{theorem}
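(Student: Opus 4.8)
The plan is to realize $L$ as an iterated mapping cone (or kernel) built entirely out of stabilizations of the $X_j$, and then show that all but one term is a zero object. First I would use the hypothesis that exactly one $\gamma_i$ escapes to $p \to +\infty$, together with Definition~\ref{defn.vertically-collared}, to organize the collar: outside $K \times M$ we have $L\setminus(K\times M) = \coprod_j \gamma_j \times X_j$, and for $j \neq i$ each $\gamma_j$ stays in a bounded strip of $p$-values. The idea is to \emph{slide the collars vertically}: apply an eventually linear, compactly-supported-in-time Hamiltonian isotopy of $T^*\RR$ (crossed with the identity on $M$) that pushes all the $\gamma_j$ with $j \neq i$ off to the negative $p$-direction so that, after the isotopy, their union with their product $X_j$ factors avoid a neighborhood of $\RR \times \sk(M)$ (recall $\Lambda = \sk(M)$). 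By Proposition~\ref{prop.linear-isotopy-equivalence} this does not change the object's equivalence class. The point is that after the slide, the only part of $L$ still interacting with the skeleton is $\gamma_i \times X_i$ — which, after a further linear isotopy straightening $\gamma_i$, is precisely the stabilization $X_i^\dd$, hence the same object as $X_i$ by Remark~\ref{remark.stabilization-equivalence}.

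More carefully, I would proceed by induction on the number of ``extra'' collar components $\gamma_j$, $j\neq i$. At each stage, pick a component $\gamma_j$ whose $p$-coordinate is extremal (say the most negative), and write $L$ as the mapping cone of a morphism $L' \to \gamma_j \times X_j$ in $\lag(M)$, where $L'$ is obtained from $L$ by ``capping off'' that collar — i.e., the kernel/cone decomposition of Section~\ref{section.mapping-cone} applied to the cobordism direction transverse to $\gamma_j$. Concretely: the component $\gamma_j \times X_j$ can be separated from the rest of $L$ by a vertical line in $T^*\RR$, exhibiting $L$ as built from $\gamma_j \times X_j$ and the remainder via a cone. Then I would observe that $\gamma_j \times X_j$ — once $\gamma_j$ is pushed (by Proposition~\ref{prop.linear-isotopy-equivalence}) to avoid a neighborhood of $\RR^n\times\sk(M)$ — is a $\Lambda$-avoiding object, hence a \emph{zero object} by Proposition~\ref{prop.lambda-avoiding-zero-objects}. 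A mapping cone on a zero object is an equivalence (the map $0 \to \cone$ becomes an equivalence, and one uses stability as in Proposition~\ref{prop.cone-zero-equivalence}, or rather its dual: the cone of $L' \to 0$ is $\Sigma L'$, but more simply, $\cone(L' \to 0) \simeq L'[1]$ and then a shift; I would arrange the decomposition so that killing the zero summand literally returns $L'$ up to the loop/suspension that does not change the underlying object). So $L \simeq L'$ where $L'$ has one fewer extra collar. Iterating, we reduce to the case where the \emph{only} collar component is $\gamma_i \times X_i$, and then a final straightening isotopy plus Remark~\ref{remark.stabilization-equivalence} identifies $L$ with $X_i$.

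The step I expect to be the main obstacle is making the ``separate one collar component by a vertical line and exhibit $L$ as a mapping cone'' move precise in the $\infty$-categorical framework — i.e., showing rigorously that a vertically collared Lagrangian with an isolated (in $p$) collar component $\gamma_j \times X_j$ sits in an exact triangle with the ``capped-off'' remainder. The informal picture is exactly the kernel/cone model recalled after Theorem~\ref{theorem.mapping-cone}: bending the collar $\gamma_j$ around into a curve that becomes conical in the negative cotangent direction realizes the capping-off, and the resulting object is by construction a cofiber. I would need to check that the capped-off object $L'$ is again a legitimate object of $\lag(M)$ (eventually conical in the product, $\Lambda$-avoiding in the right direction, exact, braned) — this is routine given the control from Remark~\ref{remark.LL2-conical}-style arguments — and that the brane/grading bookkeeping introduced by the loop and suspension functors washes out, as noted in the remarks following Theorem~\ref{theorem.mapping-cone}. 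A secondary subtlety is ensuring the vertical Hamiltonian slides can be performed simultaneously/compatibly for all the $j \neq i$ components without colliding with $\gamma_i$; since $\gamma_i$ is the unique component reaching large positive $p$ and the others live in bounded strips, a single eventually linear Hamiltonian on $T^*\RR$, compactly supported in time, suffices, and Proposition~\ref{prop.linear-isotopy-equivalence} then delivers the equivalence.
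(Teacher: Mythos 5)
There is a genuine gap in your proposal, and it sits exactly where you flag it: the claim that ``the component $\gamma_j \times X_j$ can be separated from the rest of $L$ by a vertical line in $T^*\RR$, exhibiting $L$ as built from $\gamma_j \times X_j$ and the remainder via a cone.'' A vertical collar component is glued to the compact body of $L$ inside the set $K \times M$; cutting along a vertical line does not produce an object of $\lag(M)$, and more importantly exact triangles in $\lag(M)$ do not arise by declaring a decomposition --- they arise from a cobordism via Theorem~\ref{theorem.mapping-cone} and the collar-bending model of $\kernel(Q)$. You never construct the morphism $L' \to \gamma_j \times X_j$ whose cone is claimed to be $L$, and there is no reason one exists. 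The paper supplies exactly this missing construction, and crucially does it with a \emph{single} surgery rather than the induction you propose. Its first move is an eventually linear isotopy arranging that the unique upward collar becomes the vertical ray $\{q_0\} \times \RR_{\geq 0} \times X_i$ and \emph{everything else} --- the compact body and all downward collars --- lies strictly in $\{p < 0\}$ and $\{q < q_0\}$. One then adjoins a $\Lambda$-avoiding curve $\gamma' \times X_i$ (a zero object, so a harmless coproduct summand by Propositions~\ref{prop.lambda-avoiding-zero-objects} and~\ref{prop.disjoint-union-coproduct}), and recognizes the result as the stabilized surgery $X_i^\dd \sharp_{\{x\}\times X_i} L_1$, where $L_1$ is $L$ with its upward collar bent downward. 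Because of the initial isotopy, $L_1$ avoids $\RR \times \sk(M)$ \emph{entirely}. Proposition~\ref{prop.stabilized-surgery-cobordism} provides the cobordism $Q$ from the surgery to $X_i^\dd$, and its kernel is vertically collared by $L_1$, hence (after sliding) $\Lambda$-avoiding, hence zero; Proposition~\ref{prop.cone-zero-equivalence} then gives that $Q$ is an equivalence with no shift.

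Two secondary points reinforce that your route would not close up. First, even granting the cone step, $\cone(L' \to 0) \simeq \Sigma L'$, and iterating would leave you with $L \simeq \Sigma^{N-1} X_i^\dd$; the theorem asserts $L \simeq X_i$ on the nose, brane structure and all, so the shift is not something you can wave away. Second, the one-collar-at-a-time strategy you are describing is essentially the proof of Theorem~\ref{theorem.vertical-collar-filtration}, which appears later and is deduced from Theorem~\ref{theorem.main}, which is in turn deduced from the present theorem --- so repairing your argument by invoking the surgery exact sequence would be circular. The fix is to abandon the induction, use the initial isotopy to push everything but one collar below the zero section, and then run the surgery-cobordism argument once.
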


\begin{remark}
The theorem holds true ``upside down'' as well---if there is a unique $i$ for which $\gamma_i$ attains an arbitrarily large and negative $p$-coordinate, then $L$ is equivalent to $X_i$.
\end{remark}

\begin{figure}
		\[
			\xy
			\xyimport(8,8)(0,0){\includegraphics[width=5in]{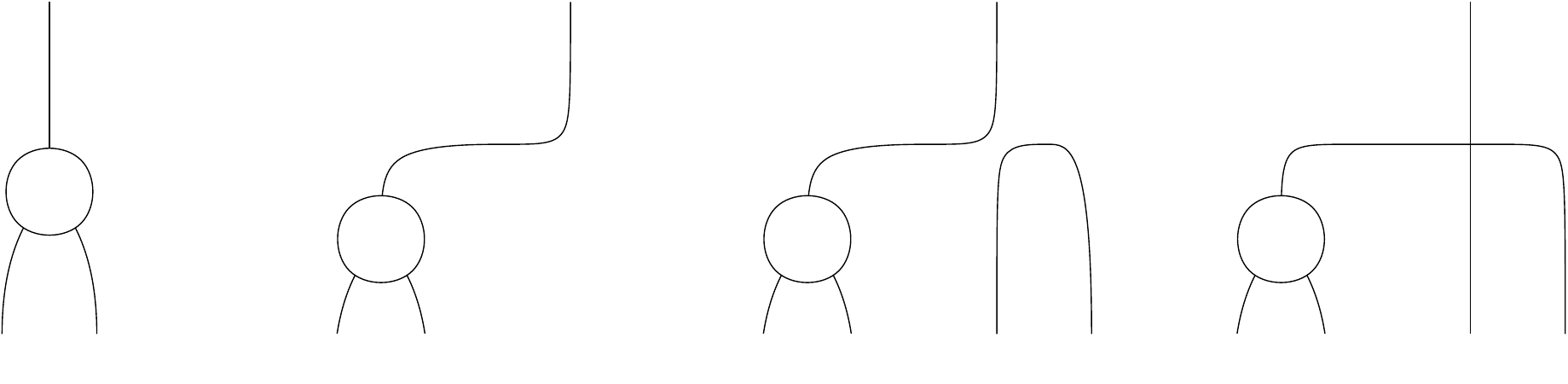}}
			,(0.2,0.4)*+{L}
			,(1.9, 0.4)*+{L'}
			,(5.4,0.4)*+{\gamma' \times X_0}
			,(6.55,0.4)*+{L_1}
			,(7.5,0.4)*+{L_0}
			\endxy
		\]
\begin{image}\label{figure.vertical-collar-argument}
Some of the Lagrangians involved in the proof of Theorem~\ref{theorem.vertical-collar-equivalence}, all projected to $T^*\RR$. 
\end{image}
\end{figure}

\begin{proof}[Proof of Theorem~\ref{theorem.vertical-collar-equivalence}.]
We refer the reader to Figure~\ref{figure.vertical-collar-argument} for a pictorial summary of this argument. 

Let $L \subset T^*\RR \times M$ and let $\gamma_0 \times X_0$ be the vertical collar. Using an eventually linear Hamiltonian isotopy, we may isotope $L$ to a Lagrangian $L'$ satsifying the following property: There exists $q_0 \in \RR$ such that every point of $L'$ has $q$-coordinate $q \leq q_0$, and
	\eqnn
	L' \cap \{(q,p) \, | \, p \geq 0\}
	\times M
	=
	\{(q_0, p) \, | \, p \geq 0 \} \times X_0
	\subset T^*\RR \times M.
	\eqnd
Since eventually linear Hamiltonian isotopies induce equivalences in $\lag(M)$, we have that $L \simeq L'$. (Proposition~\ref{prop.linear-isotopy-equivalence}.) Now let us fix some connected, non-compact curve $\gamma' \subset T^*\RR$ such that $\gamma'$ only has points with $q \geq q_0$ and $p \leq 0$, and such that $\gamma'$ is eventually conical. Then the disjoint union
	\eqnn
	L' \coprod \gamma' \times X_0
	\eqnd
is equivalent to $L'$ in $\lag(M)$. To see this, note that $\gamma'$ can be Hamiltonian-isotoped to avoid $\RR \subset T^*\RR$, hence $\gamma'\times X_0$ is a zero object (Proposition~\ref{prop.lambda-avoiding-zero-objects}). And if two branes can be separated over $\RR$ (e.g., $L'$ and $\gamma' \times X_0$ are fibered over disjoint open sets of $\RR$), their disjoint union is the coproduct in $\lag(M)$ (Proposition~\ref{prop.disjoint-union-coproduct}).

Now we note that $\gamma'$ can be chosen so that $L' \coprod (\gamma' \times X_0)$ is a stabilized surgery of two Lagrangians: (i) $L_0 = X_0^\dd$ is the stabilization of $X_0$, and (ii) $L_1$ is a Lagrangian which is equal to $L$ where $q< q_0$, but where $q \geq q_0$, equals $\gamma'' \times X_0$ where $\gamma''$ is a curve with $p < 0$. Note that by construction of $\lag(M)$, we have that $L_0 \simeq X_0$ in $\lag(M)$ (Remark~\ref{remark.stabilization-equivalence}). Further, the intersection of $L_0$ with $L_1$ is equal to $\{x\} \times X_0$ for a unique $x \in T^*\RR$. 

By Proposition~\ref{prop.stabilized-surgery-cobordism}, we know there exists a Lagrangian cobordism
	\eqn\label{eqn.Q}
	Q: L_0 \sharp_{\{x\} \times X_0} L_1 \to L_0.
	\eqnd
(One can check that, by our definition of $L_1$, one need not alter the brane structure by a group element $\sigma.$) Note that $\kernel(Q)$ is vertically collared by $L_1$---in fact, one can arrange so that for $p_0$ large enough,	
	\eqnn
	\kernel(Q) \cap \{(q,p) \, | p \geq p_0\}
	=
	\{(q_0',p) \, | p \geq p_0\} \times L_1
	\eqnd
where $q_0' \in \RR$ is some fixed real number. Then one can isotope $\kernel(Q)$ into a Lagrangian $C$ such that
	\eqnn
	C \cap \{(q,p) \, | p \geq -\epsilon\}
	=
	\{(q_0',p) \, | p \geq -\epsilon \} \times L_1
	\eqnd
for some real number $\epsilon>0$.  In particular, because $L_1$ avoids $\RR \times \Lambda \subset T^*\RR \times M$, we have that $C$ avoids $\RR \times \RR \times \Lambda \subset T^*\RR \times T^*\RR \times M$---that is, $C$ is a zero object (Proposition~\ref{prop.lambda-avoiding-zero-objects}). 

This means that $\kernel(Q) \simeq 0$ in $\lag(M)$; in other words, the map~\eqref{eqn.Q} is an equivalence (Proposition~\ref{prop.cone-zero-equivalence}). Since $L_0 \simeq X_0$ and $Q: L_0 \sharp_{\{x\} \times X_0} L_1 \simeq L$, the result follows.
\end{proof}

\subsection{Proof of Theorem~\ref{theorem.main}.}

\begin{proof}[Proof of Theorem~\ref{theorem.main}.]
Let $Q: L_0 \sharp_p L_1 \to L_0$ be the morphism construction in Proposition~\ref{prop.surgery-cobordism}. Let $\kernel(Q)\subset T^*\RR \times M$ be the mapping cone (See Section~\ref{section.mapping-cone} and Figure~\ref{figure.LL3}.). Then $\kernel(Q) $ satisfies the hypotheses of Theorem~\ref{theorem.vertical-collar-equivalence}, and we have an equivalence $\kernel(Q) \simeq L_1$.
\end{proof}

\subsection{Filtrations on vertically collared Lagrangians}\label{section.filtrations}
Let $L \subset T^*\RR \times M$ be a vertically collared Lagrangian. Let $K$ be the compact set guaranteed in Definition~\ref{defn.vertically-collared}; we now write
	\eqn\label{eqn.vertically-collared-prime}
	L \setminus (K \times M) = 
	\left(\coprod_{i \in 1, \ldots, N} \gamma_i \times X_i \right)
	\coprod
	\left(\coprod_{j \in 1, \ldots, N'} \gamma_j' \times X_j \right)
	\eqnd
where each $\gamma_i$ is a connected, non-compact curve in $T^*\RR\setminus K$ consisting of elements with $p$-coordinate positive, while $\gamma_j'$ are connected, non-compact curves in $T^*\RR \setminus K$ consisting of elements with negative $p$ coordinate. (Either or both of $N, N'$ may equal 0.) We order $\gamma_1,\ldots,\gamma_N$ according to a counterclockwise orientation of the plane, and likewise for $\gamma_1',\ldots,\gamma_{N'}'$. See Figure~\ref{figure.vertically-collared}. This geometric assumption yields the following algebraic consequence in $\lag(M)$:

\begin{theorem}\label{theorem.vertical-collar-filtration}
Under the assumption~\eqref{eqn.vertically-collared-prime}, there exists a filtration of $L$
	\eqnn
	0 \to L_{N} \to \ldots \to L_1 = L
	\eqnd
such that the associated graded pieces $L_i / L_{i+1}$ are equivalent to the branes $X_i$. Likewise, there is a filtration of $L$
	\eqnn
	0 \to L_{N'} \to \ldots \to L_1' = L
	\eqnd
such that the associated graded pieces $L_i' / L_{i+1}'$ are equivalent to the branes $X_i'$.
\end{theorem}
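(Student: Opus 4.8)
The plan is to induct on the number $N$ of upward ends of $L$, at each stage peeling off the extreme upward end and identifying the cofiber of that step, via Theorem~\ref{theorem.vertical-collar-equivalence}, with a brane supported on the Lagrangian $X_i$. The base cases set up the induction: if $N = 0$, an eventually linear Hamiltonian isotopy (Proposition~\ref{prop.linear-isotopy-equivalence}) pushes all of $L$ into $\{p < 0\} \subset T^*\RR$, so that $L$ avoids a neighborhood of $\RR \times \sk(M)$ and $L \simeq 0$ by Proposition~\ref{prop.lambda-avoiding-zero-objects}, and there is nothing to prove; if $N = 1$, the single upward end is the unique curve attaining arbitrarily large positive $p$-coordinate, so Theorem~\ref{theorem.vertical-collar-equivalence} gives $L \simeq X_1$, which is the asserted one-step filtration.

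For the inductive step, suppose $N \geq 2$ and let $X_1$ be the first upward end in the counterclockwise order, so that $\gamma_1$ is extreme. Mimicking the opening of the proof of Theorem~\ref{theorem.vertical-collar-equivalence}, I would use an eventually linear Hamiltonian isotopy to put $L$ in a normal form in which $\gamma_1 \times X_1$ is isolated to one side, and then apply the trace-of-surgery construction of Sections~\ref{section.surgery}--\ref{section.stabilized-surgery} to present $L$, after disjoint union with a zero object and hence up to equivalence, as a stabilized surgery $X_1^{\dd} \sharp \widetilde L$. Here $\widetilde L$ is obtained from $L$ by bending the $\gamma_1$-end downward; it is again vertically collared, with upward ends $X_2, \dots, X_N$ and downward ends those of $L$ together with one new end carrying $X_1$. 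Proposition~\ref{prop.stabilized-surgery-cobordism} then gives a morphism $Q$ in $\lag(M)$ from this surgery to $X_1^{\dd} \simeq X_1$ (Remark~\ref{remark.stabilization-equivalence}) whose mapping cone is vertically collared with a \emph{unique} upward end, collared by $\widetilde L$; applying Theorem~\ref{theorem.vertical-collar-equivalence} to that mapping cone --- taking $T^*\RR \times M$ as the ambient symplectic manifold --- identifies it with $\widetilde L$. This is the same step used in the proof of Theorem~\ref{theorem.main}, except that here one does not land in a zero object, since $\widetilde L$ still meets the skeleton. Rotating the resulting cofiber sequence and absorbing the induced grading and $Pin$-structure shift into the brane structure, as the paper does systematically (compare the relation between $\Omega$ and $L^{\sigma}$ in Section~\ref{section.mapping-cone} and Theorem~\ref{theorem.main}, and the normalization $\alpha_{\gamma_i} = -1/2$), one obtains a cofiber sequence $L_2 \to L \to X_1$ in $\lag(M)$ in which $L_2$ is vertically collared on the underlying Lagrangian of $\widetilde L$, with upward ends $X_2, \dots, X_N$.

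Since $L_2$ has $N - 1$ upward ends, the inductive hypothesis yields a filtration $0 \to L_N \to \dots \to L_3 \to L_2$ with associated gradeds $X_2, \dots, X_N$; prepending the step $L_2 \to L_1 := L$ with cofiber $X_1$ produces the claimed filtration $0 \to L_N \to \dots \to L_2 \to L_1 = L$ with associated gradeds $X_1, \dots, X_N$. The filtration by the downward ends $X_j'$ follows from the identical argument carried out upside down, using the reflected ($p \mapsto -p$) version of the surgery cobordism together with the downward form of Theorem~\ref{theorem.vertical-collar-equivalence} noted in the remark after its statement.

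I expect the main obstacle to be the geometric core of the inductive step: arranging the normal form and the trace cobordism so that precisely the one extreme end $X_1$ becomes the target of $Q$ while $X_2, \dots, X_N$ and all the downward ends are carried along untouched inside $\widetilde L$, and checking that the mapping cone of $Q$ really is vertically collared with a unique upward collar so that Theorem~\ref{theorem.vertical-collar-equivalence} applies. A secondary but unavoidable point is the bookkeeping of brane structures --- verifying that the grading shifts coming from $\dd$-stabilization, from the loop and suspension functors implicit in rotating cofiber sequences, and from the bending of collaring curves all cancel against the normalization $\alpha_{\gamma_i} = -1/2$, so that the associated gradeds are the branes $X_i$ determined by~\eqref{eqn.vertically-collared-prime} on the nose.
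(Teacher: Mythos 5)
Your proof is correct and its inductive strategy --- peel off the extreme upward end via a stabilized surgery and identify the associated graded using Theorem~\ref{theorem.vertical-collar-equivalence} --- is the same as the paper's. The one place you diverge is how the surgery step is arranged. You run the same maneuver as the proof of Theorem~\ref{theorem.vertical-collar-equivalence} itself: isotope $L_i$, add a zero object, recognize the result as the stabilized surgery $X_i^{\dd}\sharp \widetilde{L}$, and then apply Proposition~\ref{prop.stabilized-surgery-cobordism} together with Theorem~\ref{theorem.vertical-collar-equivalence} (now applied to a non-acyclic kernel) to read off the fiber sequence $\widetilde{L}\to L_i \to X_i$ directly. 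The paper instead \emph{constructs} $L_{i+1}$ as the non-zero component of the surgery $L_i\sharp\Omega X_i$, where $\Omega X_i$ is a stabilization of $X_i$ with grading shifted by one, invokes Theorem~\ref{theorem.main} to obtain $\Omega X_i\to L_{i+1}\to L_i$, and rotates once to arrive at $L_{i+1}\to L_i\to X_i$. Your route avoids the $\Omega$ shift and the rotation at the cost of re-verifying the recognition step (which the proof of Theorem~\ref{theorem.vertical-collar-equivalence} already supplies), while the paper's route leans on Theorem~\ref{theorem.main} as a black box. The geometric core and the bookkeeping you correctly flag --- arranging the normal form so exactly one end is peeled off, and absorbing the brane-structure shift --- are identical in either version. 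Your explicit base cases $N = 0, 1$ are a welcome addition that the paper leaves implicit.
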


By the exactness of the functor $\Xi$, we conclude:

\begin{corollary}
In $\finite(\fuk(M)$, the modules represented by the $L_{i}$ above fit into the same filtration of the module represented by $L$, and the associated gradeds of the filtration are given by the modules represented by the $X_i$. Likewise for $L_{i}'$ and $X_i'$. 
\end{corollary}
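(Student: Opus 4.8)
The plan is to obtain the corollary as a formal consequence of Theorem~\ref{theorem.vertical-collar-filtration}, by pushing the two filtrations it produces in $\lag(M)$ forward along the exact functor $\Xi$. Recall from \cite{tanaka-pairing} that there is a functor $\Xi\colon \lag(M) \to \finite(\fuk(M))$ which sends a brane $L \subset M \times T^*E^n$ to the module $X \mapsto CF^*(X \times E^n, L)$; by definition this is precisely ``the module represented by $L$.'' The target $\finite(\fuk(M))$ is a stable $\infty$-category: it is the full subcategory of $\fun(\fuk(M)^{\op},\chain)$ on the finitely generated modules, and the cofiber of a map between finitely generated chain complexes is again finitely generated; moreover $\Xi$ does land there, since the objects of $\fuk(M)$ are geometrically near $\sk(M)$ and hence each $CF^*(X\times E^n, L)$ is generated by finitely many intersection points, exactly as recalled in the proof of the corollary following Theorem~\ref{theorem.main}. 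By the main result of \cite{tanaka-exact}, $\Xi$ is exact: it preserves zero objects and (co)fiber sequences.

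First I would pin down what a filtration is, so that ``the same filtration'' has content. A length-$N$ filtration $0 \to L_N \to \cdots \to L_1 = L$ in a stable $\infty$-category $\cC$ is an object of the $N$-th term of the Waldhausen $S$-construction (equivalently, the $s$-dot construction) of $\cC$: a coherent system of cofiber squares recording the subobjects $L_i$, all of the subquotients $L_i/L_j$ --- in particular the associated gradeds $L_i/L_{i+1}$, with $L_{N+1} := 0$ --- and the compatibilities among them. This is exactly the datum supplied by Theorem~\ref{theorem.vertical-collar-filtration}, whose proof realizes $L$ through a chain of cofiber sequences with successive cofibers equivalent to the $X_i$. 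An exact functor $F\colon \cC\to\cD$ preserves zero objects and the defining cofiber squares, hence induces $S_\bullet F$ on $S$-constructions; consequently $F$ carries a filtration to a filtration and identifies $F(L_i)/F(L_j)$ canonically with $F(L_i/L_j)$. Applying $S_\bullet\Xi$ to the first filtration of Theorem~\ref{theorem.vertical-collar-filtration} yields a filtration
	\eqnn
	0 \to \Xi(L_N) \to \cdots \to \Xi(L_1) = \Xi(L)
	\eqnd
in $\finite(\fuk(M))$ whose associated gradeds are $\Xi(L_i)/\Xi(L_{i+1}) \simeq \Xi(L_i/L_{i+1}) \simeq \Xi(X_i)$, using functoriality of $\Xi$ for the last equivalence. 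Since $\Xi(L_i)$, $\Xi(L)$, and $\Xi(X_i)$ are by definition the modules represented by $L_i$, $L$, and $X_i$, this is the first claim; applying $S_\bullet\Xi$ to the ``upside down'' filtration of Theorem~\ref{theorem.vertical-collar-filtration} gives the claim for the $L_i'$ and $X_i'$ identically.

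I do not expect a genuine obstacle: all the geometric and homotopical work is already done in Theorem~\ref{theorem.vertical-collar-filtration} and in the exactness of $\Xi$ from \cite{tanaka-exact}, which we are free to invoke. The only points deserving a sentence of care are (i) that $\Xi$ really takes values in the stable subcategory $\finite(\fuk(M))$ of finitely generated modules --- the place where ``near the skeleton'' is used, precisely as in the corollary following Theorem~\ref{theorem.main} --- and (ii) that ``the same filtration'' is to be read as the image under $S_\bullet\Xi$ of the filtration datum of Theorem~\ref{theorem.vertical-collar-filtration}, so that the identification of associated gradeds with the $\Xi(X_i)$ is the canonical one coming from exactness rather than an ad hoc equivalence.
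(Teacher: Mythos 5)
Your proposal is correct and matches the paper's own argument: the paper deduces this corollary in one line, ``by the exactness of the functor $\Xi$,'' applying Theorem~\ref{theorem.vertical-collar-filtration} and the fact (recalled in the corollary after Theorem~\ref{theorem.main}) that $\Xi$ lands in $\finite(\fuk(M))$ because branes near the skeleton give finitely generated Floer complexes. Your elaboration via the $S_\bullet$-construction is just a careful spelling-out of that same pushforward-of-filtrations argument, so there is nothing further to add.
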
 

\begin{remark}
By rotating the exact sequences of the filtration in Theorem~\ref{theorem.vertical-collar-filtration}, one also obtains a filtration of $X_1$ whose associated gradeds are given by suspensions $\Sigma X_i$.
\end{remark}

\begin{figure}
$
			\xy
			\xyimport(8,8)(0,0){\includegraphics[width=4.5in]{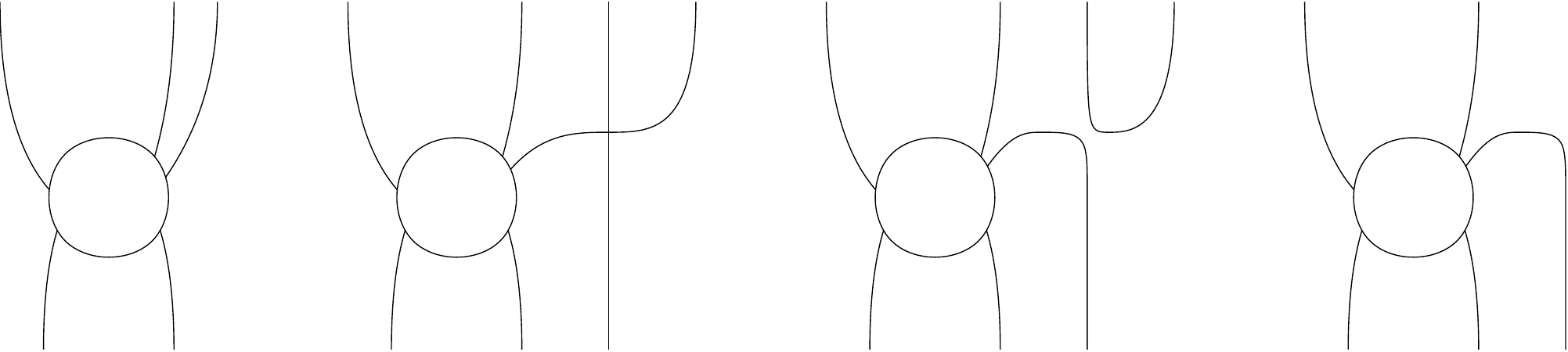}}
			,(0.5,-1)*+{L_i}
			,(1.3,8.2)*+{X_i}
			,(2.3,-1)*+{\phi(L_i)}
			,(3.4,-1)*+{\Omega X_i}
			,(5.2,-1)*+{L_{i+1} \coprod A_i}
			,(7.2,-1)*+{L_{i+1}}
			\endxy
$
\caption{
A summary of the Lagrangians involved in the proof of Theorem~\ref{theorem.vertical-collar-filtration}.
}\label{figure.filtration-argument}
\end{figure}

\begin{proof}[Proof of Theorem~\ref{theorem.vertical-collar-filtration}.]
Let $L = L_1$. Assume we have inductively defined $L_i$ with base case $i=1$. Choose a linear Hamiltonian isotopy $\phi_i$ such that $\phi_i(L_i)$ has the following property: When $q$ is large enough, $\phi_i(L_i)$ is equal to a copy of $\beta_i \times X_i$, where $\beta_i$ is some eventually conical curve tending to a large, positive $p$ coordinate.  (See Figure~\ref{figure.filtration-argument}.)

Consider a stabilization of $X_i$ with grading one more than the grading of $X_i^{\dd}$; we call this stabilization $\Omega X_i$. One can arrange for $\Omega X_i$ to intersect $\phi_i(L_i)$ uniquely at $\{x\} \times X_i$ for some $x \in T^*\RR$. The stabilized surgery yields a disjoint union of two Lagrangians; one component, which we will call $A_i$, is the product of some curve in $T^*\RR$ with $X_i$, and $A_i$ is equivalent to a zero object because it does not intersect $\RR \times \Lambda$. The other component, we will call $L_{i+1}$. By Theorem~\ref{theorem.main}, we obtain an exact sequence
	\eqnn
	\Omega X_i \to L_i \sharp_{\{x\} \times X_i} \Omega X_i \to L_{i}.
	\eqnd
Because the surgery is equivalent to $L_{i+1}$, rotating the exact sequence yields the exact sequence
	\eqnn
	L_{i+1} \to L_i \to X_i
	\eqnd
and the first filtration follows. The proof of the second filtration by $X_i'$ is similar so we omit it.
\end{proof}

\subsection{Compatibilities with the $s$-dot construction}\label{section.conjectures}
In this last section, we remark on some relations between the structures observed in this paper and in the works~\cite{lurie-waldhausen,dyckerhoff-kapranov,biran-cornea}. We assume the reader is familiar with the $s$-dot construction for stable $\infty$-categories.

First we note that the $s$-dot construction naturally defines a colored planar $\infty$-operad. Colors are given by (equivalence classes of) objects in the stable $\infty$-category $\cC$, and a $k$-ary operation from $(X_k, \ldots, X_1)$ to $X$ is given by a $k$-step filtration of $X$ 
	\eqnn
	Y_k \to Y_{k_1} \to \ldots \to Y_1 \simeq X
	\eqnd
whose associated gradeds are equipped with identifications
	\eqnn
	Y_i / Y_{i-1} \simeq X_i.
	\eqnd
Put another way, the space of $k$-simplices in the $s$-dot construction is the space of $k$-ary operations.

In parallel, suppose we consider vertically collared branes with $N=k$ and $N'=1$; the collection of such branes also forms a colored planar $\infty$-operad---indeed, the picture in Figure~\ref{figure.vertically-collared} is meant to suggest that the object $X_1'$ colors the root of a tree, while $X_k,\ldots,X_1$ color the leaves. 

\begin{conjecture}\label{conjecture.s-dot}
The filtration of Theorem~\ref{theorem.vertical-collar-filtration} defines a map of colored planar $\infty$-operads to the colored planar $\infty$-operad obtained from the $s$-dot construction of $\lag(M)$.
\end{conjecture}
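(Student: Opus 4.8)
The plan is to exhibit both planar colored $\infty$-operads as $2$-Segal spaces in the sense of Dyckerhoff--Kapranov~\cite{dyckerhoff-kapranov} and then to produce the desired morphism as a map of the underlying simplicial spaces whose effect on each level is precisely the construction in the proof of Theorem~\ref{theorem.vertical-collar-filtration}. On the $\lag(M)$ side one has the Waldhausen $s$-dot construction $S_\bullet\lag(M)$, a simplicial $\infty$-category whose $k$-th level is the $\infty$-category of $k$-step filtrations $0 = Y_0 \to Y_1 \to \cdots \to Y_k$ equipped with all their cofibers; passing to maximal sub-$\infty$-groupoids and invoking stability of $\lag(M)$ (Theorem~\ref{theorem.mapping-cone}) makes this a $2$-Segal space whose associated planar colored operad has $\mul(X_1,\dots,X_k;Y)$ the space of such filtrations with prescribed graded pieces $X_i$ and total object $Y_k \simeq Y$. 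On the brane side I would assemble the spaces $\cV_k$ of vertically collared branes in $T^*\RR\times M$ with exactly $k$ \emph{ordered} ends escaping to $p\to+\infty$ (the leaves) and a single end escaping to $p\to-\infty$ (the root) into a simplicial space $\cV_\bullet$, with $\cV_0 \simeq *$ and $\cV_1\simeq$ the space of branes; the face map $d_i$ merges the $i$-th and $(i{+}1)$-st upward ends using the local trace model of Section~\ref{section.surgery} and Proposition~\ref{prop.stabilized-surgery-cobordism}, and the degeneracies $s_i$ insert a trivially collared end. Theorems~\ref{theorem.vertical-collar-equivalence} and~\ref{theorem.vertical-collar-filtration} are exactly the statements that $\cV_1$ maps to objects of $\lag(M)$ and that $\cV_k$ maps to $k$-step filtrations.

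The comparison map $\Phi_\bullet\colon \cV_\bullet \to S_\bullet\lag(M)$ is then built level by level: given a vertically collared brane $L \in \cV_k$, one produces the tower $0 \to L_k \to \cdots \to L_1 = L$ by repeatedly peeling off the outermost upward end via a linear Hamiltonian isotopy (Proposition~\ref{prop.linear-isotopy-equivalence}) and a stabilized surgery (Proposition~\ref{prop.stabilized-surgery-cobordism}), reading the cofibers off the vertical collars as the $X_i$. The first genuine task is to check that this assignment is \emph{continuous in families}, i.e. that it upgrades from a pointwise construction to an honest map of spaces $\cV_k \to (S_k\lag(M))^\simeq$; this should go through because each auxiliary choice in the proof of Theorem~\ref{theorem.vertical-collar-filtration} (the isotopies $\phi_i$, the auxiliary curves $\beta_i$, the cutoff neighborhoods) lives in a contractible space and the cut-and-paste constructions of Section~\ref{section.surgery} are already local and parametrized.

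The crux is compatibility with the simplicial structure maps, which is what promotes $\Phi_\bullet$ to a map of $2$-Segal spaces and hence of planar colored $\infty$-operads. On the $S_\bullet$ side $d_i$ composes $Y_{i-1}\to Y_i \to Y_{i+1}$, the outer faces drop an end, and $s_i$ repeats an object; one must show that merging two adjacent upward ends of a vertically collared brane matches, after applying $\Phi$, composing the corresponding two steps of the filtration. This reduces to a \emph{decomposition-of-cobordisms} statement: the trace cobordism attached to a two-end configuration, glued onto the trace cobordism attached to the remaining ends, is canonically isotopic to the trace cobordism attached to the merged configuration. I expect this to be the main obstacle --- no single identity is hard, but verifying it coherently and simultaneously for all faces, degeneracies, and higher simplices requires organizing the surgery constructions of Section~\ref{section.surgery} into a genuinely functorial package. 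An alternative, possibly cleaner route is to verify the $2$-Segal (Segal-along-triangulations) conditions directly for $\cV_\bullet$ --- that a vertically collared brane with $k$ upward ends is the appropriate colimit of its elementary two-end pieces --- which forces $\Phi_\bullet$ to be a map of operads as soon as it is known on $\cV_{\leq 2}$, where everything is explicit. Finally one records that a level-wise equivalence-respecting map of $2$-Segal spaces induces a map of associated planar colored $\infty$-operads~\cite{dyckerhoff-kapranov}, acting as the identity on colors, which yields Conjecture~\ref{conjecture.s-dot}.
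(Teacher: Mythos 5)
The paper states Conjecture~\ref{conjecture.s-dot} without proof---it appears in a closing section labelled as speculative remarks---so there is no argument in the paper to compare against, and your proposal is being judged on its own merits.

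Your strategy (realize both sides as 2-Segal simplicial spaces in the sense of Dyckerhoff--Kapranov and construct the comparison levelwise from Theorem~\ref{theorem.vertical-collar-filtration}) is the natural framework, and it is consistent with the references the paper gestures at. But the proposal is a research plan rather than a proof, and you say so yourself. The genuine gaps are, first, that the brane-side simplicial space $\cV_\bullet$ is not actually constructed: the face maps you describe as ``merging two adjacent upward ends via the local trace model'' involve non-canonical choices (Darboux charts, surgery profiles $\Gamma$, the curves $\gamma'$, the isotopies in Remark~\ref{remark.LL2-ham-isotopy}), and you would need to show that these assemble into maps of spaces satisfying the simplicial identities, at least up to coherent homotopy. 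Asserting that each auxiliary choice ``lives in a contractible space'' is not enough by itself: you need a coherent, parametrized version of the cut-and-paste constructions of Section~\ref{section.surgery}, which is substantial new work beyond what is in the paper.

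Second, and more fundamentally, the compatibility of $\Phi_\bullet$ with the face maps of $S_\bullet\lag(M)$---your ``decomposition-of-cobordisms'' statement---is exactly the content of the conjecture, and you flag it as ``the main obstacle'' without resolving it. The claim that a two-step peeling followed by the remaining filtration agrees with the filtration obtained after merging two ends is a nontrivial geometric assertion about composing the trace cobordisms of Proposition~\ref{prop.stabilized-surgery-cobordism}, and nothing in the paper supplies it. Your alternative route (verify the 2-Segal conditions directly on $\cV_\bullet$ so that the map is determined by its restriction to low simplicial degree) is an attractive reduction, but it too is left as a suggestion. Until one of these two is carried out, the proposal does not establish the conjecture.
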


Finally, the filtration of Theorem~\ref{theorem.vertical-collar-filtration} is obtained by rotating collared ends below and above the zero section $\RR \subset T^*\RR$---this discrete operation is periodic up to a shift: If we repeat it $N+1$ times, we obtain the same collared brane we began with, but with shifted grading. This is reminiscent of the famous paracyclic action on the $s$-dot construction, and we have the following vague statement:

\begin{conjecture}
Moreover, the map is compatible with the paracyclic action on the $s$-dot construction.
\end{conjecture}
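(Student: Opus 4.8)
This statement presupposes Conjecture~\ref{conjecture.s-dot}, so the plan is to fix the operad map $F$ produced there (from the operad of vertically collared branes with $N$ positive and $N'=1$ negative ends to the colored planar $\infty$-operad of $S_\bullet\lag(M)$) and then to refine $F$ to a map of paracyclic objects in $\inftycat$ --- equivalently, of paracyclic planar $\infty$-operads, in a sense one would also need to pin down. On the algebraic side, recall from~\cite{dyckerhoff-kapranov,lurie-waldhausen} that for a stable $\infty$-category $\cC$ the $s$-dot construction $S_\bullet\cC$ is not merely $2$-Segal but \emph{paracyclic}: it extends to a functor out of the paracyclic category $\Lambda_\infty$ (not to be confused with the skeleton $\Lambda=\sk(M)$), where the generating automorphism $t_n$ of $[n]$ rotates an $n$-step flag in the manner of a rotation of distinguished triangles, cyclically permuting the $n+1$ objects attached to it --- its $n$ associated-graded pieces $X_1,\dots,X_n$ together with the suspension $\Sigma X$ of the total object $X$ --- and where $t_n^{n+1}$ is canonically equivalent to $\Sigma$. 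In the operadic dictionary of Section~\ref{section.conjectures}, in which an $n$-ary operation is an $n$-simplex of $S_\bullet\cC$, this says that the $(n+1)$-gon whose edges are labeled by $X_1,\dots,X_n,X$ carries a rotation action, and the goal is to show $F$ is equivariant for it.

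\noindent Next I would construct the paracyclic structure on the geometric side. The operator $\rho$ is the ``rotate the collared ends'' move already named in the last paragraph of Section~\ref{section.conjectures}: given a vertically collared brane $L\subset T^*\RR\times M$ with $N$ ends of positive $p$-coordinate (the leaves $\gamma_1,\dots,\gamma_N$, labeled $X_1,\dots,X_N$) and one end of negative $p$-coordinate (the root $\gamma_1'$), one cyclically rotates the $N+1$ conical ends at infinity of $T^*\RR$ by one position, pushing a positive-$p$ end below the zero section and promoting the old negative-$p$ end to a positive one. Geometrically this is implemented by an eventually linear Hamiltonian isotopy (Proposition~\ref{prop.linear-isotopy-equivalence}) away from the compact set $K$ of~\eqref{eqn.vertically-collared-prime}, together with the surgery cobordism of Proposition~\ref{prop.stabilized-surgery-cobordism}. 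One must then check that $\rho$, the face maps (forgetting a collar), and the degeneracies (inserting a trivial collar) satisfy the paracyclic relations, and that $\rho^{N+1}$ is the grading shift on $L$ --- which is exactly the periodicity observed in the paper. Matching $\rho^{N+1}$ with $t_n^{n+1}\simeq\Sigma$ is then a bookkeeping matter using the grading-shift computations $\alpha_{\Omega L}=\alpha_{L^\dd}+1$ and $\alpha_{\Sigma L}=\alpha_{L^\dd}-1$ from Section~\ref{section.mapping-cone}.

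\noindent The comparison then rests on a single geometric observation: one click of $\rho$ realizes one click of the polygon rotation on $S_\bullet\lag(M)$. By construction $F$ builds the flag $0\to L_N\to\dots\to L_1=L$ out of the iterated surgery exact triangles $\Omega X_i\to L_i\sharp_{\{x\}\times X_i}\Omega X_i\to L_i$ of Theorem~\ref{theorem.main}, whose rotations are the exact triangles $L_{i+1}\to L_i\to X_i$; and moving a collared end across the zero section is precisely the geometric incarnation of rotating the outermost of these triangles, which cyclically permutes the $N+1$ edge-labels of the associated $(N+1)$-gon and records the passage around infinity as a $\Sigma$-twist. Hence on homotopy categories the equality $F\rho=tF$ is exactly the content of the ``rotating the exact sequences'' remark following Theorem~\ref{theorem.vertical-collar-filtration}, and what remains is to promote this equality to a homotopy-coherent identification, compatible with faces, degeneracies, and the closing-up equivalences $\rho^{N+1}\simeq\Sigma\simeq t^{N+1}$.

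\noindent The main obstacle is precisely this coherence. The geometric rotation $\rho$ is canonical only up to a contractible space of eventually linear isotopies, so it is not a strict self-map, and assembling the full data of a functor $\Lambda_\infty\to\inftycat$ on the brane side --- verifying $t_nd_i\simeq d_{i-1}t_{n-1}$, $t_ns_i\simeq s_{i-1}t_{n+1}$ and $t_n^{n+1}\simeq\Sigma$ together with all higher coherences --- and then transporting it along $F$ to match the paracyclic structure on $S_\bullet\lag(M)$ is the real work. I expect the cleanest route is not to check relations by hand but to exploit a universal property of $S_\bullet$ of a stable $\infty$-category in the sense of~\cite{lurie-waldhausen,dyckerhoff-kapranov}: produce the paracyclic refinement of $F$ formally from that of its target, once the underlying map of $2$-Segal objects (Conjecture~\ref{conjecture.s-dot}) and the compatibility of $\rho$ with the suspension functor are in place.
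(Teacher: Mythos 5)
The statement you were asked to prove is, in the paper, explicitly labeled a \emph{conjecture} and no proof is offered --- the author only gestures at the idea in the surrounding prose (``rotating collared ends below and above the zero section \dots\ is periodic up to a shift \dots\ reminiscent of the famous paracyclic action''). So there is no proof in the paper to compare your attempt against, and your submission is, correctly, framed as a sketch rather than a proof. Your outline does track the paper's informal hints: you identify the rotation of collared ends as the geometric candidate for the paracyclic generator, you observe that $(N+1)$-fold iteration gives a grading shift matching $t_n^{n+1}\simeq\Sigma$, and you locate the comparison with the rotation of triangles coming from the surgery exact sequences of Theorem~\ref{theorem.main}. You also honestly flag the real obstruction --- promoting the homotopy-category identity $F\rho \simeq t F$ to a coherent functor out of $\Lambda_\infty$ --- as unresolved. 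That self-assessment is accurate: what you have is a plausible program, not a proof.

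A few specific cautions worth noting if you pursue this. First, the rotation $\rho$ you describe conflates two distinct operations: an eventually linear Hamiltonian isotopy that moves a leaf end across the region at infinity, and the surgery cobordism of Proposition~\ref{prop.stabilized-surgery-cobordism}. The former is an equivalence of objects; the latter is a morphism used to produce the filtration, not part of the rotation itself. A clean definition of the geometric paracyclic structure should implement $\rho$ purely via isotopy on the space of collared branes, and only afterwards compare the induced filtrations. Second, your proposal to obtain the paracyclic refinement ``formally from a universal property of $S_\bullet$'' is optimistic: the universal $2$-Segal / paracyclic characterizations in~\cite{dyckerhoff-kapranov,lurie-waldhausen} characterize $S_\bullet\cC$ among simplicial/paracyclic objects, but they do not by themselves produce a paracyclic lift of an arbitrary map of underlying $2$-Segal objects; one still needs to supply the equivariance data on the source. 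Third, even Conjecture~\ref{conjecture.s-dot}, on which this one depends, is itself unproved in the paper, so your argument has an unverified foundation. None of this makes your sketch wrong --- it is a sensible outline consistent with the paper's stated expectations --- but as it stands it establishes nothing beyond what the paper already asserts informally.
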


Finally, if one goes through the work of verifying the analytical details needed to construct the results of~\cite{tanaka-pairing,tanaka-exact} in the monotone setting, we conjecture:

\begin{conjecture}
The categories $T^s$ of~\cite{biran-cornea} are the discrete colored planar operads associated to the $s$-dot constructions of $\lag(M)$ and $\fukaya(M)$, and the map in Conjecture~\ref{conjecture.s-dot} recovers the map in~\cite{biran-cornea}.
\end{conjecture}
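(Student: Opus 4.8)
The plan would be to exhibit both sides of the claimed identification as two presentations of a single combinatorial gadget --- ``$k$-step filtrations with prescribed associated graded pieces, taken up to equivalence'' --- and then to match the two maps out of the cobordism side. Throughout one works in the monotone incarnations of $\lag(M)$ and $\fukaya(M)$, and of the functor $\Xi$ of~\cite{tanaka-pairing}; their construction is presumed available via the foundational work alluded to just above, carried out compatibly with~\cite{ritter-smith}.

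First I would pin down the two colored planar operads. On the abstract side, for a stable $\infty$-category $\cC$ the $k$-simplices of $s_\bullet\cC$ are, by the description recalled in this section, $k$-step filtrations $Y_k \to \cdots \to Y_1 \simeq X$ together with their coherent quotient data; declaring colors to be equivalence classes of objects and the operation spaces to be these simplicial sets produces a colored planar $\infty$-operad, and replacing each operation space by its set of connected components --- the ``discrete'' operad --- yields an honest colored planar operad whose $k$-ary operations $(X_k,\ldots,X_1)\to X$ are equivalence classes of filtrations with $Y_i/Y_{i+1}\simeq X_i$. On the Biran--Cornea side I would unwind the definition of their $T^s$ and recall that the ``cabling''/iterated-cone morphisms are data of exactly this kind: an object $X$ together with a $k$-step filtration whose graded pieces are identified with the input tuple. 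The goal is then a natural equivalence of colored planar operads between the discrete operad of $s_\bullet\lag(M)$ and $T^s$ of the cobordism category, and likewise between the discrete operad of $s_\bullet\fukaya(M)$ and $T^s$ of the Fukaya category, intertwining the two evident maps.

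For the comparison I would lean on the geometry already in hand. Theorem~\ref{theorem.vertical-collar-filtration} turns a vertically collared brane with $N=k$ positive ends and $N'=1$ negative end into a $k$-step filtration of the root brane with graded pieces the leaf branes, and Conjecture~\ref{conjecture.s-dot} (which I assume) assembles these into a map of colored planar $\infty$-operads from the operad of such branes to that of $s_\bullet\lag(M)$. One then checks that after passing to connected components the operad of vertically collared branes is Biran--Cornea's $T^s$ of the cobordism category: their geometric cabling of cobordisms coincides --- up to compactly supported Hamiltonian isotopy and the equivalences of Propositions~\ref{prop.linear-isotopy-equivalence} and~\ref{prop.disjoint-union-coproduct} --- with concatenation of vertical collars, which is precisely the simplicial composition governing $s_\bullet$. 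Bijectivity on colors is immediate; surjectivity on $k$-ary operations holds because any filtration with prescribed graded pieces is realized by an iterated stabilized surgery as in the proof of Theorem~\ref{theorem.vertical-collar-filtration}, and injectivity because two such branes represent the same operation exactly when they are equivalent in $\lag(M)$, matching Biran--Cornea's equivalence after the monotone translation. The same argument with $\fukaya(M)$ in place of $\lag(M)$ gives the Fukaya-side identification, and the map of discrete operads induced by the exactness of $\Xi$ (see~\cite{tanaka-exact}) is, under these identifications, Biran--Cornea's functor: both maps are determined by their effect on objects and on single cofiber sequences, and on objects $\Xi(L)=CF^*(-\times E^n,L)$ is by~\cite{tanaka-pairing} exactly the module Biran--Cornea attach to a cabled cobordism.

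The main obstacle --- and the reason this is offered only as a conjecture --- is twofold and inseparable. One must first carry out the monotone analogues of~\cite{tanaka-pairing,tanaka-exact}, i.e.\ build $\lag(M)$, $\fukaya(M)$ and an exact $\Xi$ for compact monotone $M$. And one must then produce an honest dictionary between Biran--Cornea's geometrically and combinatorially defined $T^s$ --- \emph{including its composition law}, which is specified by gluing cobordisms along cylindrical ends --- and the homotopy-coherent $s_\bullet$-machinery, verifying not merely that the underlying sets of operations agree but that the strict planar-operadic composition of $T^s$ matches the simplicial structure maps of $s_\bullet\lag(M)$ after discretization (which is precisely where the higher coherences that Biran--Cornea do not record are discarded). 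Checking that cabling is strictly associative and unital in the way the $s_\bullet$ face and degeneracy maps demand is where I expect nearly all of the effort to lie; the paracyclic-type periodicity noted after Theorem~\ref{theorem.vertical-collar-filtration} should serve as a useful consistency check.
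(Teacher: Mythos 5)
The statement you are addressing is left as a conjecture in the paper: no proof is given, and even the introductory remark only asserts that the Biran--Cornea category ``seems to be equivalent'' to the strict category associated to the planar colored operad coming from the $s$-dot construction. So there is no argument of the paper to compare yours against, and the question is whether your proposal actually closes the gap. It does not, and you say so yourself. Three of its load-bearing inputs are precisely the open points: (i) it assumes Conjecture~\ref{conjecture.s-dot}, which is itself unproved in the paper; (ii) it assumes monotone versions of $\lag(M)$, $\fukaya(M)$ and of an exact $\Xi$, which the paper only anticipates can be built by adapting~\cite{ritter-smith} to~\cite{tanaka-pairing, tanaka-exact}; and (iii) the decisive verification---that Biran--Cornea's composition law on $T^s$, defined by gluing cobordisms along cylindrical ends, coincides after discretization with the simplicial structure maps of $s_\bullet\lag(M)$, and that their equivalence relation on cobordisms matches $\pi_0$ of the relevant operation spaces---is deferred in your last paragraph as ``where I expect nearly all of the effort to lie.'' That verification is not a technical afterthought; it \emph{is} the conjecture. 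Your claimed injectivity (``two such branes represent the same operation exactly when they are equivalent in $\lag(M)$'') and surjectivity (via iterated stabilized surgery as in Theorem~\ref{theorem.vertical-collar-filtration}) are asserted rather than argued, and the surjectivity step in particular would require showing that an arbitrary abstract filtration in the $s$-dot construction, not just one arising from a vertically collared brane, is realized geometrically---a generation-type statement the paper nowhere provides.

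As a research program your outline is reasonable and consistent with what the paper envisions: use Theorem~\ref{theorem.vertical-collar-filtration} plus Propositions~\ref{prop.linear-isotopy-equivalence} and~\ref{prop.disjoint-union-coproduct} to translate vertical collars into filtrations, discretize, and match against $T^s$. But as submitted it is a plan with the hard steps named rather than executed, so it should be presented as such and not as a proof of the conjecture.
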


\bibliographystyle{amsalpha}
\bibliography{biblio}

\end{document}